\documentclass[10pt,a4paper]{article}

\usepackage{tikz}
\usepackage[width=14.00cm, height=25.00cm]{geometry}
\usepackage[T1]{fontenc}
\usepackage[utf8]{inputenc}
\usepackage{amsmath,amssymb,amsfonts,amsthm,amscd, bm, mathtools}
\usepackage{enumerate}
\usepackage{todonotes}
\usepackage{hyperref}
\usepackage{float}
\usepackage{comment}

\usetikzlibrary{positioning}
\usetikzlibrary{decorations.pathmorphing}
\usetikzlibrary{decorations.text}
\usepackage{indentfirst}
\newcommand{\bref}[1]{\textbf{\ref{#1}}}

\theoremstyle{plain}
\newtheorem{theorem}{Theorem}[section]
\newtheorem{proposition}{Proposition}[section]
\newtheorem{lemma}{Lemma}[section]
\newtheorem{corollary}{Corollary}[section]
\newtheorem{conjecture}{Conjecture}[section]
\newtheorem{claim}{Claim}[section]
\newtheorem{question}{Question}[section]

\newtheorem*{theorem*}{Theorem}
\newtheorem*{corollary*}{Corollary}

\newtheoremstyle{theoremstar}
{} 
{} 
{\itshape} 
{} 
{\bfseries} 
{.} 
{.5em} 
{\thmname{#1}\thmnote{ \bfseries #3}}
\makeatother

\theoremstyle{theoremstar}
\newtheorem*{theoremstar}{Theorem}

\newtheoremstyle{corollarystar}
{} 
{} 
{\itshape} 
{} 
{\bfseries} 
{.} 
{.5em} 
{\thmname{#1}\thmnote{ \bfseries #3}}
\makeatother

\theoremstyle{corollarystar}
\newtheorem*{corollarystar}{Corollary}

\theoremstyle{definition}
\newtheorem{definition}{Definition}[section]
\newtheorem{example}{Example}[section]

\theoremstyle{remark}

\title{Kriesell's conjecture for infinite graphs}
\author{Leandro Aurichi, Paulo Magalhães Júnior and Rodrigo Monteiro}
\newcommand{\Addresses}{{
		\bigskip
		\footnotesize
		
		L.~Aurichi, Instituto de Ci\^encias Matem\'aticas e de Computa\c c\~ao, Universidade de S\~ao Paulo\\
		Avenida Trabalhador s\~ao-carlense, 400, S\~ao Carlos, SP, 13566-590, Brazil\par\nopagebreak
		\textit{E-mail}: \texttt{aurichi@icmc.usp.br}
		
		\medskip
		P.~Magalhães Jr, Instituto Federal do Rio Grande do Norte\\
		Rua Manoel Lopes Filho 773, Currais Novos, RN, 59380-000 \par\nopagebreak
		\textit{E-mail}: \texttt{paulo.magalhaes@ifrn.edu.br}
		
		\medskip
		R.~Monteiro, Instituto de Ci\^encias Matem\'aticas e de Computa\c c\~ao, Universidade de S\~ao Paulo\\
		Avenida Trabalhador s\~ao-carlense, 400, S\~ao Carlos, SP, 13566-590, Brazil\par\nopagebreak
		\textit{E-mail}: \texttt{rodrigosm@usp.br}
}}
\date{}

\begin{document}
	
\maketitle
	
	\begin{abstract}
		Let $G$ be a graph and $S\subseteq V(G)$ be a subset of vertices. An $S$-Steiner tree $T$ of $G$ is a tree of $G$ which contains $S$ in its vertex set $V(T)$. Kriesell conjectured that for every $2k$-edge-connected subset $S\subseteq V(G)$ in a finite connected graph $G$, there exist $k$ pairwise edge-disjoint $S$-Steiner trees. This conjecture is false for infinite graphs. We present a version of Kriesell's conjecture with topological $S$-Steiner trees for countable finitely edge-separable graphs and a version with $F$-limits of trees for rayless graphs.		
We show that if Kriesell's conjecture holds for finite graphs, then it holds for every connected, rayless and finitely edge-separable graph. We also show that every $2k$-edge-connected rayless and finitely edge-separable graph contains $k$ pairwise edge-disjoint spanning trees.
	\end{abstract}

\medskip
\noindent\textbf{Key words:} Kriesell's conjecture, infinite graphs, topological trees.

\medskip
\noindent\textbf{AMS subject classification:} 05C63, 05C05, 05C40.

	\section{Introduction}
	
	In this paper, all graphs are assumed to be simple, that is, they contain neither loops nor multiple edges. We adopt the terminology and notation of~\cite{Diestel2025}.
Let $G$ be a graph. We say that a subset $S \subseteq V(G)$ is \emph{$k$-edge-connected} if, for every pair of vertices $u,v \in S$, there exist $k$ pairwise edge-disjoint $u$–$v$ paths in $G$. In the case $S = V(G)$, the graph $G$ is said to be \emph{$k$-edge-connected}. 

The following well-known and important theorem, due to Nash-Williams and Tutte, relates even edge-connectivity to the existence of edge-disjoint spanning trees:

	\begin{theorem}[\cite{nash1961edge,tutte1961problem}]
		\label{nash}
		Every $2k$-edge-connected finite graph contains $k$ pairwise edge-disjoint spanning trees.
	\end{theorem}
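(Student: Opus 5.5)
The plan is to derive the statement from the Nash-Williams--Tutte tree-packing criterion: a finite graph $G$ contains $k$ pairwise edge-disjoint spanning trees if and only if, for every partition $\mathcal{P}$ of $V(G)$ into nonempty classes, the number of edges of $G$ joining distinct classes is at least $k(|\mathcal{P}|-1)$. Granting this criterion, the theorem becomes a counting argument. Fix a partition $\mathcal{P}=\{V_1,\dots,V_p\}$ of $V(G)$ with $p\ge 2$. Since $G$ is $2k$-edge-connected, Menger's theorem shows that each cut $E(V_i,V(G)\setminus V_i)$ has at least $2k$ edges. Every cross edge lies in exactly two such cuts, so the number of cross edges is at least $\tfrac12\cdot p\cdot 2k=kp\ge k(p-1)$. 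The criterion then yields the $k$ trees. The case $p=1$ is trivial.

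The real work is the criterion itself, and specifically its sufficiency; necessity is immediate, since each spanning tree must contain at least $p-1$ edges crossing $\mathcal{P}$. I would prove sufficiency via matroid union. Let $M$ be the cycle matroid of $G$ and consider the union $M^k$ of $k$ copies of $M$. Its independent sets are exactly the unions of $k$ forests. By the matroid union theorem (Nash-Williams/Edmonds), its rank is
\[
r_k(E)=\min_{A\subseteq E}\bigl(|E\setminus A|+k\,r(A)\bigr).
\]
We need $r_k(E)=k(|V|-1)$, since a union of $k$ forests with that many edges must consist of $k$ spanning trees. So fix any $A\subseteq E$. Let $\mathcal{P}$ be the partition of $V$ into the vertex sets of the components of $(V,A)$, with $p$ parts. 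Then $r(A)=|V|-p$. Also $E\setminus A$ contains all cross edges of $\mathcal{P}$, so $|E\setminus A|\ge k(p-1)$. Adding these gives $|E\setminus A|+k\,r(A)\ge k(p-1)+k(|V|-p)=k(|V|-1)$, which is what we need.

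I expect the main obstacle to be justifying the matroid union rank formula, if I cannot cite it. In that case the fallback is an augmenting-path (exchange) argument. Take $k$ edge-disjoint forests with the maximum total number of edges, and suppose they are not all spanning trees. Look at the set of edges reachable through sequences of swaps of the form ``add $e$ to $F_i$, remove the edge of the cycle it closes''. The closure of that reachable set gives a partition violating the counting condition, which contradicts the computation in the first paragraph.
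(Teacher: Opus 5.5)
The paper does not prove this theorem; it quotes it from \cite{nash1961edge,tutte1961problem}. Your argument is correct.

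Your first half is exactly the standard derivation of this statement from the tree-packing criterion, as in \cite{Diestel2025}: each class of a partition with $p\ge 2$ classes sends at least $2k$ edges out, each cross edge is counted twice, so there are at least $kp\ge k(p-1)$ cross edges. Only the trivial direction of Menger is used here, since $2k$ edge-disjoint paths from a vertex of $V_i$ to a vertex outside $V_i$ already use $2k$ distinct cut edges.

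You depart from the textbook in proving the criterion itself. You invoke the matroid union rank formula, and taking $\mathcal{P}$ to be the components of $(V,A)$ is exactly the right choice. You should add one remark: an independent set of the union matroid can be split into $k$ \emph{disjoint} forests by discarding overlaps. After that, the count $k(|V|-1)$ forces each forest to be a spanning tree.

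The textbook instead runs a self-contained exchange argument on $k$-tuples of edge-disjoint forests. Your route is shorter and extends verbatim to packing bases in arbitrary matroids. The price is that it outsources the whole difficulty to the matroid union theorem, whose own proof is an augmenting-path argument of the same depth.

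Your fallback sketch is not yet a proof. The substantive lemma it omits is this: the vertex set of each component of the graph formed by all edges left unused by some exchange-reachable tuple spans a connected subgraph of every $F_i$. Only with that lemma does the partition into these components have all its cross edges inside the forests. Then each forest contributes at most $|\mathcal{P}|-1$ cross edges, and a forest that is not a spanning tree contributes at most $|\mathcal{P}|-2$. That strict deficit is what contradicts the counting condition.
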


Theorem~\ref{nash} does not extend to infinite graphs.  For every natural number $k\in\mathbb{N}$ greater than two, one can construct a $2k$-edge-connected countable graph that does not contain two edge-disjoint spanning trees (see~\cite{AharoniThomassen1989}). However, it remains valid for locally finite infinite graphs if one considers topological analogues of spanning trees, namely, topological spanning trees.

\begin{theorem}[{\cite[Chapter 8]{Diestel2025}}]
	\label{teo}
	Every $2k$-edge-connected locally finite graph contains $k$ pairwise edge-disjoint topological spanning trees.
\end{theorem}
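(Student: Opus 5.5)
The plan is the standard compactness argument in the Freudenthal compactification $|G|$. Since $G$ is locally finite and connected, it is countable, so we can fix an enumeration $v_1,v_2,\dots$ of $V(G)$. For each $n$ let $S_n=\{v_1,\dots,v_n\}$, and let $G_n$ be the finite multigraph obtained from $G$ by contracting each component of $G-S_n$ to a single vertex. We keep parallel edges and delete loops. Local finiteness guarantees that each $G_n$ is finite. Contraction cannot decrease edge-connectivity: every cut of $G_n$ is a cut of $G$, namely the one obtained by expanding the contracted vertices. Hence every $G_n$ is $2k$-edge-connected. Theorem~\ref{nash} extends to finite multigraphs with the same proof (the Nash-Williams--Tutte partition condition), so each $G_n$ contains $k$ edge-disjoint spanning trees $T^1_n,\dots,T^k_n$.

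Next we pass to a limit. Each $G_n$ is a contraction of $G_{n+1}$, and $E(G_n)\subseteq E(G)$ is finite. So there are only finitely many $k$-tuples of edge sets in each $G_n$, and K\"onig's infinity lemma applies. Restricting the tuples from $G_m$ to $E(G_n)$ for $m\ge n$, we obtain a compatible sequence of $k$-tuples $(F^1_n,\dots,F^k_n)$. Each such tuple is the restriction to $E(G_n)$ of the spanning trees of some $G_m$ with $m\ge n$, and the tuples are nested as $n$ grows. Set $F^i=\bigcup_n F^i_n\subseteq E(G)$. Edge-disjointness of $F^1,\dots,F^k$ is immediate, since any two edges lie in some common $E(G_n)$.

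The key point is that restricting a spanning tree of $G_m$ to $E(G_n)$ gives a spanning \emph{connected} subgraph of $G_n$. Indeed, contracting connected vertex sets preserves connectivity, and the contracted image of a spanning tree of $G_m$ in $G_n$ is exactly its edge set restricted to $E(G_n)$. The image may contain cycles, but it is connected and spanning. It is acyclic only when restricted to edges with both ends in $S_n$.

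We then verify that $\overline{F^i}$, the closure in $|G|$ of the subgraph formed by the edges $F^i$ and all vertices, is a topological spanning tree. For acyclicity of the edges: a finite cycle in $F^i$ lies inside some $S_n$, where it is a cycle of the tree $T^i_m$ restricted to edges inside $S_n$, which is impossible. A circle through ends would pass through some edge $e$. We then choose $n$ so large that $e$ and its endpoints lie in $S_n$, and use the fact that in $|G|$ a circle through $e$ must meet the finite cut separating the sides of $e$ in the tree $T^i_m$. Equivalently, we use the characterization from \cite[Chapter 8]{Diestel2025}: a standard subspace is a topological spanning tree iff it meets every finite cut and contains no circle. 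The circle condition reduces to showing that every finite cut of $\overline{F^i}$ has the right parity, using finite contractions. For topological connectedness, we use the criterion that a standard subspace containing all vertices is connected iff it contains an edge from every finite cut of $G$. Every finite cut $E(A,B)$ lies in some $G_n$ as a cut of $G_n$, and the spanning connected restriction $F^i_n$ must cross it.

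I expect the main obstacle to be the acyclicity with respect to infinite circles. I would handle it with the cycle-space criterion that an edge set contains no circle in $|G|$ iff it meets no finite cut in exactly one edge in an appropriate sense. The cleaner route is to use the characterization of topological spanning trees as the closures of edge sets $F$ such that $F$ meets every finite cut and, for every finite cut $D$, the edges of $F\cap D$ can be verified in finite contractions. Choosing the trees in $G_n$ to be restrictions of a single compatible sequence is exactly what makes that verification work.
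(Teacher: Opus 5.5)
Your set-up is sound as far as it goes. The contracted multigraphs $G_n$, Nash-Williams--Tutte applied to them, the compactness step, edge-disjointness, the exclusion of finite cycles, and the proof that each $F^i$ meets every finite cut of $G$ are all correct. The last point makes $\overline{F^i}$ a connected, hence arc-connected, standard subspace containing $V(G)$.

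The gap is the final claim that $\overline{F^i}$ itself contains no circle through ends. This is false in general, so no verification in the finite contractions can establish it. Take $k=1$ and let $G$ be the one-way ladder with rails $u_0u_1u_2\cdots$ and $w_0w_1w_2\cdots$ and rungs $u_iw_i$, exhausted by $S_n=\{u_i,w_i : i\le n\}$. Then $G_n$ is the finite ladder on $S_n$ plus one dummy vertex $z$ joined to $u_n$ and $w_n$, and $z\,u_mu_{m-1}\cdots u_0w_0w_1\cdots w_m$ is a spanning tree of $G_m$. For every $m>n$ its restriction to $E(G_n)$ is the cycle $z\,u_n\cdots u_0w_0\cdots w_n\,z$. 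So your nested sequence is forced to consist of these cycles. Then $F^1$ is the union of the two rails and $u_0w_0$, whose closure in $|G|$ is a circle through the end. This is exactly the phenomenon of Figure~\ref{limit}. The parity criteria you sketch do not help. Circles meet finite cuts evenly, but that constrains \emph{subsets} of $F^i$; an edge set can meet finite cuts with any parities and still contain a circle.

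The missing idea is to extract a topological spanning tree from the limit rather than prove that the limit is one. Enumerate $F^i=\{e_0,e_1,\dots\}$ and delete $e_j$ whenever it lies on a circle in the closure of the remaining edges together with $V(G)$.
\begin{itemize}
\item Deletion preserves meeting every finite cut. If $e_j$ lies on such a circle $C$ and $e_j\in D$ for a finite cut $D$, then $C$ contains a second edge of $D$, which remains.
\item Each finite cut $D$ is only affected at the finitely many steps handling its own edges. So the final edge set still meets every finite cut, and its closure is connected, hence arc-connected.
\item A circle in the final closure would contain some $e_j$ and would already lie in the closure at step $j$, so $e_j$ would have been deleted.
\end{itemize}
This gives a topological spanning tree inside each $\overline{F^i}$, and these trees are edge-disjoint because the $F^i$ are. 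The paper only cites \cite[Chapter 8]{Diestel2025} for this theorem, but its Lemma~\ref{limitrees} uses the same pruning device for the same reason.
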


The notion of a topological tree arises naturally when one considers a graph together with its ends (or edge-ends) and views it as a topological space.

\begin{definition}
	Let $G$ be a graph. A 1-way infinite path is called a \emph{ray} and its infinite connected subgraphs are its \emph{tails}. We denote by $\mathcal{R}(G)$ the set of rays of a graph $G$.
	Two rays $r,s\in \mathcal{R}(G)$ in a graph $G$ are \emph{edge-equivalent} if no finite set of edges separates them, that is, for every finite set of edges $X\subseteq E(G)$ the rays $r$ and $s$ have a tail in the same connected component of $G-X$. Each equivalence class $[r]_E$ is called an edge-end of $G$. We denote by $\Omega_E(G)$ the set of edge-ends of $G$.
\end{definition}

Edge-ends were first introduced by Hahn, Laviolette and \v{S}ir\'a\v{n} in \cite{HahnLavioletteSiran1997} and have recently been studied in \cite{boska2025edgedirectioncompactedgeendspaces,pitz2025metrizationtheoremedgeendspaces, real2025subbasepropertydescribingedgeend}. Let us define a topology on $||G|| = G \cup \Omega_E(G)$. We begin by viewing $G$ itself as a $1$-complex. Every edge is homeomorphic to the interval $[0,1]$. For every vertex $v\in V(G)$, we choose as a basis of open neighborhoods the open stars of radius $\frac{1}{n}$. It remains to define the basic open sets around the edge-ends of $G$. For every finite subset of edges $X \subseteq E(G)$ and every edge-end $[r]_E \in \Omega_E(G)$, there is a unique component $C_E(X,[r]_E)$ of $G- X$ that contains a tail of every ray in $[r]_E$.  Define $$\Omega_E(X,[r]_E) = \{[r'] \in \Omega_E(G) : C_E(X,[r']_E) = C_E(X,[r]_E)\}$$ Given $[r]_E \in \Omega_E(G)$ and a finite set $X\subset E(G)$, we define the open neighborhood around $[r]_E$ to be
$$
\Omega_E(X,[r]_E) \cup C_E(X,[r]_E) \cup E(X,[r]_E)
$$
where $E(X,[r]_E)$ is the union
of half-edges between $C_E(X,[r]_E)$ and $X$. We denote this topology by $\textsc{Etop}(G) $.

Let $G$ be a graph, let $u,v \in V(G)$, and let $[r]_E \in \Omega_E(G)$. We say that the vertex $u$ lies in the edge-end $[r]_E$ if it edge-dominates a ray in $[r]_E$, that is, if for every finite set of edges $X\subseteq E(G)$ there exists a ray in $[r]_E$ with a tail in the same connected component as $u$ in $G-X$. The vertices $u$ and $v$ are infinitely edge-connected if there exist infinitely many edge-disjoint paths between them.

We define the topological space $\textsc{Etop}'(G)$ as the quotient space obtained by identifying edge-inseparable points of $||G||$, that is, points such that every neighborhood of one contains the other and vice versa. This occurs precisely when either one point is an edge-end and the other is a vertex that edge-dominates it, or when both points are infinitely edge-connected vertices. If $G$ is locally finite, then $ \textsc{Etop}'(G)$ and $\textsc{Etop}(G)$ coincide.

\begin{definition}
Let $G$ be a graph. A topological path $P$ is a subspace of $\textsc{Etop}'(G)$ homeomorphic to $[0,1]$, and a topological circle $C$ is a subspace of $\textsc{Etop}'(G)$ homeomorphic to $\mathbb{S}^1$. We say that $C\cap G$ is a topological cycle. A \emph{topological tree} $T$ of $G$ is a path-connected subspace of $\textsc{Etop}'(G)$ that contains no topological circles. If $V(G) = V(T)$, we say that $T$ is a topological spanning tree.

\end{definition}

In Theorem~\ref{nash}, the graph is assumed to be $2k$-edge-connected. In~\cite{Kriesell2003}, Kriesell conjectured  that a stronger result holds for any $2k$-edge-connected vertex set, together with the corresponding generalization of spanning trees, namely, Steiner trees.

	\begin{definition}
			Let $G$ be a graph and $S$ be a subset of vertices of $G$. A (topological) $S$-Steiner tree is a (topological) tree which contains $S$ in its vertex set.
	\end{definition}
		\begin{conjecture}[\cite{Kriesell2003}]
			\label{kriesell}
		Let $G$ be a connected finite graph and let $S \subseteq V(G)$ be a $2k$-edge-connected subset. Then $G$ contains $k$ pairwise edge-disjoint $S$-Steiner trees.
	\end{conjecture}
	
 In~\cite{Kriesell2003}, Kriesell shows that his conjecture holds in the special case where every vertex in $V(G) \setminus S$ has even degree. Frank, Király, and Kriesell~\cite{frank2003} proved that there exist $k$ edge-disjoint $S$-Steiner trees whenever all edge cuts separating $S$ have size greater than $3k$ and $V(G) \setminus S$ is an independent set. Additional cases have been established involving modifications of the edge-connectivity of the set $S$ (see \cite{DeVosMcDonaldPivotto2016,Li2018}). The same counterexample to Theorem~\ref{nash} for infinite graphs also applies to Conjecture~\ref{kriesell}.

The goal of this paper is to propose a version of Kriesell’s conjecture in terms of topological Steiner trees and $F$-limits of trees. We extend this conjecture in stages: first to locally finite graphs, then to countable graphs, and finally to rayless graphs. For locally finite graphs, the conjecture is the following:

\begin{conjecture}
	\label{conj}
	Let $G$ be a connected, locally finite infinite graph and $S\subseteq V(G)$ be a $2k$-edge-connected subset. Then $G$ with $\textsc{Etop}(G)$ contains $k$ pairwise edge-disjoint topological $S$-Steiner trees.
\end{conjecture}

	The locally finite case, which will be established in Section~\ref{loc}, is proved using $F$-limits of trees. 	
	Intuitively, an $F$-limit $H$ of a graph $G$ is a subgraph for which there exists a sequence of subgraphs $\langle H_n\rangle _{n \in \mathbb{N}}$ of $G$ such that a vertex or edge of $G$ belongs to $H$ whenever it belongs to $H_n$ for many $n \in \mathbb{N}$. This technique is presented in this context in \cite{Aurichi2026} and has recently been employed in \cite{guilherme, aurichi2025cyclecoversinfinitebipartite}. We use non-principal ultrafilters to define this limit and make precise the meaning of ``many $n$''.
	Our main theorem of Section \ref{loc} is:

		\begin{theoremstar}[\bref{localfin}]
		 Conjectures~\ref{kriesell} and~\ref{conj} are equivalent.
	\end{theoremstar}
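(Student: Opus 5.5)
We prove the two implications separately. The direction Conjecture~\ref{conj} $\Rightarrow$ Conjecture~\ref{kriesell} should be essentially formal. Given a finite connected $G$ and a $2k$-edge-connected $S\subseteq V(G)$, we build a connected, locally finite infinite graph $G'$ by attaching a single ray $R$ to one vertex $v_0\in V(G)$. Every $u$--$v$ path with $u,v\in S$ that enters $R$ must return through the same attaching edge, so it can be shortened to a path in $G$. Hence $S$ is $2k$-edge-connected in $G'$ if and only if it is in $G$. Conjecture~\ref{conj} then gives $k$ edge-disjoint topological $S$-Steiner trees $T_1,\dots,T_k$ in $\textsc{Etop}(G')$. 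Since $G'$ has exactly one end, living in $R$, each $T_i\cap G$ is a path-connected subgraph of $G$ without cycles. Indeed, $T_i$ cannot use the end to connect points of $G$: the arc would have to pass through $v_0$ twice. So $T_i\cap G$ is an $S$-Steiner tree of $G$, and these trees are edge-disjoint. (Alternatively one can take the Steiner subtree of the minimal arcs between vertices of $S$.)

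The substantial direction is Conjecture~\ref{kriesell} $\Rightarrow$ Conjecture~\ref{conj}, which we attack by compactness with $F$-limits, following the standard proof of Theorem~\ref{teo}.
\begin{enumerate}
\item Let $G$ be locally finite and connected, hence countable, and enumerate $V(G)=\{v_0,v_1,\dots\}$. Set $V_n=\{v_0,\dots,v_n\}$ and let $G_n$ be the finite graph obtained from $G$ by contracting each component of $G-V_n$ to a single vertex, deleting loops but keeping parallel edges. Kriesell's conjecture for finite multigraphs follows from the simple case by subdividing edges, which preserves edge-connectivity of $S$ and Steiner trees.
\item Set $S_n=S\cap V_n$. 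Every edge cut of $G_n$ separating two vertices of $S_n$ is a finite cut of $G$ separating them, so $S_n$ is $2k$-edge-connected in $G_n$.
\item Kriesell's conjecture yields $k$ edge-disjoint $S_n$-Steiner trees $T^n_1,\dots,T^n_k$ in $G_n$. Their edges are edges of $G$.
\item Fix a non-principal ultrafilter $F$ on $\mathbb{N}$ and define $H_i$ as the set of edges $e$ with $\{n: e\in T^n_i\}\in F$. Local finiteness is what makes this limit behave well: for each finite set of edges, the pattern of membership is decided on an $F$-large set. In particular, the $H_i$ are pairwise edge-disjoint.
\item Show that the closure $\overline{H_i}$ in $\textsc{Etop}(G)$ is topologically connected and contains $S$. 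For connectivity, take any finite cut $E(A,B)$ with $A\cap S\neq\emptyset\neq B\cap S$. For $F$-many $n$, the tree $T^n_i$ meets this cut, because the cut stays a cut in $G_n$ once $n$ is large. Only finitely many edges lie in the cut, so some fixed cut edge lies in $T^n_i$ for $F$-many $n$, and hence in $H_i$. Applying the known lemma (\cite[Chapter 8]{Diestel2025}) that a standard subspace meeting every finite cut whose sides both contain points of it is topologically connected, we get that $\overline{H_i}$ is connected between the vertices of $S$. Arc-connectedness then follows for closed connected standard subspaces.
\item Take a minimal (topological) $S$-Steiner tree inside $\overline{H_i}$. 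For instance, use Zorn's lemma on closed connected standard subspaces containing $S$, with connectivity preserved under decreasing intersections by compactness of $|G|$. Such a minimal subspace contains no circle.
\end{enumerate}

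The main obstacle is step~5 combined with the restriction to $S$. The cut criterion must be applied only to cuts separating vertices of $S$, whereas the usual lemma concerns spanning subspaces, so it needs care. We would first try applying it to the standard subspace $D_i=\overline{H_i}$ restricted to the arc-component containing $S$. We would also check that cuts in which one side contains no vertex of $S$ impose no condition.
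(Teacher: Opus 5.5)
Your converse direction is the paper's argument: attach a ray at one vertex, note that an arc joining two points of $G$ cannot use the unique end without passing through the attachment vertex twice, and restrict the trees to $G$. For the main direction you take a genuinely different route (contraction graphs $G_n$, a cut criterion, a Zorn/compactness minimality argument). As written, however, Step~5 does not go through.

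The lemma you invoke is not about spanning subspaces. It says that a standard subspace $X$ is connected if $X$ contains an edge from \emph{every} finite cut of which it meets both sides. You check this only for cuts with vertices of $S$ on both sides. That does not show that the vertices of $S$ lie in one component of $\overline{H_i}$. Your remedy of passing to ``the arc-component containing $S$'' presupposes exactly this. Also, cuts with no vertex of $S$ on one side do impose a condition whenever $\overline{H_i}$ meets both of their sides.

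The missing observation is that your crossing argument works for every finite cut $E(A,B)$ that $\overline{H_i}$ meets on both sides.

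First, every vertex $a$ of $\overline{H_i}$ is an endpoint of some $e\in H_i$. For all large $n$ both endpoints of $e$ lie in $V_n$, so $a\in V(T^n_i)$ for $F$-many $n$.

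Second, if $\overline{H_i}$ meets a side only in an end or in an inner point of an edge, then it also contains a vertex of that side or an edge of the cut. Indeed, the basic neighbourhood of the end determined by the finite set of endpoints of cut edges lies in that side and meets $H_i$.

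Now take vertices $a\in A$ and $b\in B$ of $\overline{H_i}$. For $F$-many $n$ we have $a,b\in V(T^n_i)$, and $V_n$ contains $a$, $b$ and all endpoints of cut edges. Then every component of $G-V_n$ lies on one side, so $E(A,B)$ is a cut of $G_n$ separating $a$ from $b$. The connected tree $T^n_i$ must cross it, and since the cut is finite, one of its edges lies in $H_i$.

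So the lemma applies to $\overline{H_i}$ itself. It gives a closed, connected, hence arc-connected standard subspace containing $S$, and your Step~6 finishes the proof.

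The paper argues differently. It takes $G_n$ to be the finite union of $2k$ edge-disjoint paths between pairs of vertices of $S_n$, so no contraction or multigraph reduction is needed and each $T^n_i$ is a genuine tree of $G$. It gets path-connectivity of the $F$-limit from Proposition~\ref{path1}, then deletes circle edges greedily in Lemma~\ref{limitrees} using Proposition~\ref{path}. Your route replaces these $F$-limit path results, one of which the paper states without proof, by standard facts about $|G|$. The paper's route stays inside the $F$-limit framework it reuses for the countable and rayless cases.
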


Recall that a graph is finitely edge-separable if every pair of vertices can be separated by a finite set of edges.
In Section~\ref{enum}, we establish the countable case. Moreover, we present a version based on $F$-limits of trees for any countable graph.

\begin{theoremstar}[\bref{teoenum}]
	\label{countable}
	Let $G$ be a connected, finitely edge-separable, countable graph and $S\subseteq V(G)$ be a $2k$-edge-connected subset. If Conjecture \ref{kriesell} holds, then $G$ with $\textsc{Etop}'(G) $ contains $k$ pairwise edge-disjoint topological $S$-Steiner trees. 
\end{theoremstar}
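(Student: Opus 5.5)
We would reduce the countable case to finite graphs via an exhaustion and an $F$-limit argument, mirroring the locally finite case (Theorem~\ref{localfin}). Enumerate $V(G)=\{v_0,v_1,\dots\}$ and $S=\{s_0,s_1,\dots\}$. For each $n$ we build a finite graph $G_n$ in which $S_n=\{s_0,\dots,s_n\}$ is $2k$-edge-connected. Since $S$ is $2k$-edge-connected in $G$, for each pair $s_i,s_j$ with $i,j\le n$ we fix $2k$ edge-disjoint $s_i$–$s_j$ paths. We let $H_n$ be the finite union of these paths together with $\{v_0,\dots,v_n\}$ and a finite connected subgraph containing them. Then $S_n$ is $2k$-edge-connected in $H_n$.

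Applying Conjecture~\ref{kriesell} to $H_n$ gives $k$ edge-disjoint $S_n$-Steiner trees $T^1_n,\dots,T^k_n$ in $H_n\subseteq G$. Fix a non-principal ultrafilter $F$ on $\mathbb{N}$. For each $i$, let $T^i$ be the $F$-limit: an edge $e$ belongs to $T^i$ iff $\{n: e\in T^i_n\}\in F$. Each edge lies in at most one $T^i_n$ for a fixed $n$, so the sets $\{n : e\in T^i_n\}$ are disjoint for different $i$. Hence at most one of them lies in $F$, and the $T^i$ are edge-disjoint.

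The remaining work, and the main obstacle, is to show that the closure $\overline{T^i}$ in $\textsc{Etop}'(G)$ is a topological $S$-Steiner tree. That is, it must contain $S$, be path-connected, and contain no topological circle. Circles are the easier part: any finite cycle in $T^i$ would lie in $F$-many $T^i_n$, which is impossible. For an infinite topological circle we use finite edge cuts. Let $X$ be a finite set of edges, say a finite cut. The finite restriction of each $T^i_n$ to $X$ stabilizes on an $F$-large set. The standard characterization (a closed subspace contains no circle iff it meets every finite cut in a way consistent with being a tree) then transfers the acyclicity.

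For connectivity, fix $s,t\in S$ and a finite cut $X$ separating them. For $F$-many $n$, the tree $T^i_n$ contains an $s$–$t$ path crossing $X$. Since $X$ is finite, a single edge of $X$ is chosen $F$-often. Hence $T^i$ meets every finite cut separating $s$ from $t$. We would show that in $\textsc{Etop}'(G)$ this implies $s$ and $t$ lie in the same arc-component of $\overline{T^i}$, via a lemma that a closed standard subspace meeting every finite cut between two points is connected and hence arc-connected. This lemma is where finite edge-separability and countability are used. Finite edge-separability makes vertices distinct points in $\textsc{Etop}'(G)$, so $S$ is not collapsed. Countability makes $\textsc{Etop}'(G)$ metrizable and compact enough that connected closed subsets are arc-connected. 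Proving this step carefully, in particular handling edge-ends that are dominated by vertices of infinite degree, is where we expect most of the difficulty. Finally, if needed, we prune $\overline{T^i}$ to a minimal arc-connected subspace containing $S$, which is still circle-free.
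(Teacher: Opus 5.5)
Your exhaustion by finite graphs, the edge-disjointness of the $F$-limits, and $S\subseteq V(T^i)$ are all fine. There are, however, two genuine gaps.

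First, acyclicity does not transfer to the limit. Knowing that $E(T^i)\cap X=E(T^i_n)\cap X$ for $F$-many $n$, for each finite cut $X$, gives no tree-like information, since a finite tree can meet a finite cut in any number of edges. The paper's ladder example (Figure~\ref{limit}) shows this: there, finite paths have as $F$-limit a double ray whose two tails lie in the same end, i.e.\ a topological circle. So the pruning you mention only ``if needed'' is always required, and it is not free. Deleting circle edges one at a time, or passing to a minimal arc-connected subspace via Zorn's lemma, requires knowing that arc-connectedness survives the infinite intersection. That is exactly what Proposition~\ref{path} and Lemma~\ref{limitrees} supply, and they are proved only for locally finite graphs.

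Second, the connectivity step rests on a lemma you do not prove, and the reason you give for it fails. $\textsc{Etop}'(G)$ need not be compact for a countable finitely edge-separable $G$: in an infinite star, where vertices have open stars as basic neighbourhoods, the leaves form an infinite closed discrete set. Meanwhile, the standard proofs that the finite-cut condition yields connectedness, and that closed connected standard subspaces are arc-connected, both use that $\|H\|$ is compact for a connected locally finite $H$.

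The difficulty sits precisely at vertices of infinite degree. Each edge at such a vertex $v$ may occur in only one $T^i_n$, so $T^i$ can contain $v$ but none of the edges the trees used there. An arc must then reach $v$ through an edge-end dominated by $v$. For example, in the fan formed by a ray $r_0r_1\cdots$ and a vertex $v$ adjacent to every $r_i$, the paths $r_0\cdots r_nv$ converge to the ray plus the isolated vertex $v$. You flag this case but do not resolve it.

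The missing idea is the paper's reduction, which never takes limits in $G$. Each vertex of infinite degree is replaced by a $2k$-ray, giving a locally finite $\widetilde G$ in which $\widetilde S=\sigma^{-1}[S]$ is still $2k$-edge-connected (Claims~\ref{c1}--\ref{c3}). Theorem~\ref{localfin} is applied there, where compactness is available. Only then does the paper project back via $\rho^{-1}$: it deletes edges that would close finite cycles or circles through dominating vertices, and uses Lemma~\ref{lemapath} (which needs finite edge-separability) to turn the projected paths into arcs of $\textsc{Etop}'(G)$.
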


\begin{theoremstar}[\bref{counta}]
	Let $G$ be a connected countable graph and $S\subseteq V(G)$ be a $2k$-edge-connected subset. If Conjecture \ref{kriesell} holds, then $G$ contains $k$ pairwise edge-disjoint $F$-limits of trees which contain $S$. 
\end{theoremstar}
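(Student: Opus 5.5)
The approach is to approximate $G$ by finite graphs, apply Conjecture~\ref{kriesell} in each, and pass to an $F$-limit along a non-principal ultrafilter $\mathcal{U}$ on $\mathbb{N}$. Enumerate $V(G)=\{v_0,v_1,\dots\}$ and $S=\{s_0,s_1,\dots\}$ (if $S$ is finite the argument only simplifies). For each $n$ I want a finite connected subgraph $G_n\subseteq G$ that contains $v_0,\dots,v_n$ and in which $S_n=\{s_0,\dots,s_n\}$ is $2k$-edge-connected. To build it, for each pair $s_i,s_j$ with $i,j\le n$ fix $2k$ edge-disjoint $s_i$–$s_j$ paths in $G$. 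Then let $G_n$ be the union of these finitely many finite paths together with finite paths from $s_0$ to $v_0,\dots,v_n$, which exist since $G$ is connected. This is a finite connected graph, and $S_n$ is $2k$-edge-connected in it by construction. Conjecture~\ref{kriesell} then gives pairwise edge-disjoint $S_n$-Steiner trees $T_n^1,\dots,T_n^k$ in $G_n$, hence in $G$.

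Next I take limits. For each $i\le k$, define $T^i$ as the subgraph of $G$ whose vertex set is $\{v: \{n: v\in V(T_n^i)\}\in\mathcal{U}\}$ and whose edge set is $\{e:\{n: e\in E(T_n^i)\}\in\mathcal{U}\}$. This is the $F$-limit of $\langle T_n^i\rangle$ in the sense of \cite{Aurichi2026}, so it is an $F$-limit of trees, as the statement requires. Every $s_j\in S$ lies in $T_n^i$ for all $n\ge j$, a cofinite set, so $S\subseteq V(T^i)$. For edge-disjointness, suppose $e\in E(T^i)\cap E(T^j)$ with $i\ne j$. Then $\{n:e\in T_n^i\}\cap\{n:e\in T_n^j\}\in\mathcal{U}$ is nonempty, which contradicts the edge-disjointness of $T_n^i$ and $T_n^j$ for that $n$. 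I also need $T^i$ to be a subgraph: if $e=uv\in E(T^i)$, then whenever $e\in T_n^i$ both ends $u,v$ lie in $T_n^i$, so $u,v\in V(T^i)$.

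The main obstacle is pinning down exactly what ``$F$-limit of trees'' means in the definition the paper adopts. If the definition requires the sequence to consist of trees inside finite subgraphs that exhaust $G$, then the choice of $G_n$ containing $v_0,\dots,v_n$ takes care of it. If the definition is just the ultrafilter limit of an arbitrary sequence of trees in $G$, the argument above is already complete. The remaining care goes into the construction of $G_n$, namely the observation that $2k$-edge-connectivity of $S_n$ is witnessed by finitely many finite paths. This is the step where countability of $G$ is used: it gives an enumeration so that every element of $S$ is eventually included.
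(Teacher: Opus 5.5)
Your argument is correct, and it takes a genuinely different and shorter route than the paper's.

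\textbf{The paper's route.} The paper proves Theorem~\ref{counta} by passing to the expanded graph $\widetilde{G}$, which is locally finite. It applies Theorem~\ref{localfin} there to get $k$ edge-disjoint topological $\widetilde{S}$-Steiner trees. It then writes each of these as an $F$-limit of finite trees via Proposition~\ref{fintree}, pulls those finite trees back to $G$ with $\rho^{-1}$, takes spanning trees, and passes to the limit again.

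\textbf{Your route.} You approximate $G$ directly. Choosing only $2k$ witnessing paths per pair of vertices of $S_n$ makes $G_n$ finite without any local finiteness. So Conjecture~\ref{kriesell} applies at once, and a single ultrafilter limit finishes the proof. This is the same device the paper uses in the first half of Theorem~\ref{localfin} and in Theorem~\ref{teoteo}.

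\textbf{What each buys.} Your approach avoids the whole topological detour: Lemma~\ref{limitrees}, Proposition~\ref{path} (whose proof the paper omits), Proposition~\ref{fintree}, and the structural claims about $\widetilde{G}$. It is also more general: it uses only that $S$ is countable, not $G$, so it yields Theorem~\ref{teoteo} directly. The paper's route ties the limit objects to topological trees of $\widetilde{G}$ and reuses machinery already built for Theorem~\ref{teoenum}. That extra structure is not needed for the statement as written, nor for its later black-box use inside fragments in Theorem~\ref{final}.

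\textbf{Minor points.} Your remark that countability of $G$ is used is slightly off; only countability of $S$ matters. Adding $v_0,\dots,v_n$ to $G_n$ is unnecessary, though harmless. Your worry about the definition is moot: the paper defines the $F$-limit for an arbitrary sequence of subgraphs of $G$, with no exhaustion requirement, so your argument is already complete.
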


In Section~\ref{rayless}, we restrict our attention to rayless graphs. The tools used in the proof of the rayless case will be bond-faithful decompositions, defined in~\cite{Laviolette2005}. The structure of rayless graphs will enable certain arguments that are key to the proof.

\begin{theoremstar}[\bref{final}]
	Let $G$ be a connected rayless graph and $S\subseteq V(G)$ be a $2k$-edge-connected subset. If Conjecture \ref{kriesell} holds, then $G$ contains $k$ pairwise edge-disjoint $F$-limits of trees which contain $S$.
\end{theoremstar}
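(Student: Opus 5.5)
The plan is to prove Theorem~\bref{final} by transfinite induction on the rank of the rayless graph $G$. Along the way we pass to a multigraph setting, and we carry along the statement for $2k$-edge-connected sets $S$ in connected rayless multigraphs, so that contractions stay inside the class. Recall that a rayless graph has an ordinal rank: rank $0$ graphs are finite, and a graph has rank $\le \alpha$ if deleting some finite vertex set leaves components all of rank $<\alpha$. Since we only need $F$-limits, which are insensitive to edges appearing only finitely often in the approximating sequence, the goal is the following. We want a directed family (indexed by finite substructures, with an ultrafilter on it, as in the countable case Theorem~\bref{counta}) of $k$-tuples of edge-disjoint trees $T^1_n,\dots,T^k_n$. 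Each tuple should eventually contain any given finite subset of $S$, and each $T^i_n$ should be connected on that finite subset. Taking $F$-limits coordinatewise then gives $k$ subgraphs $H^1,\dots,H^k$. These are edge-disjoint, because each edge lies in at most one $T^i_n$ for each $n$. Each $H^i$ contains $S$. And each $H^i$ is an $F$-limit of trees, as required.

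The key step is to produce, for each finite $S_0\subseteq S$, $k$ edge-disjoint trees of $G$ connecting $S_0$. For countable $G$ this was already the content of Theorem~\bref{counta}; here $G$ may be uncountable, which is where raylessness enters. We use a bond-faithful decomposition in the sense of~\cite{Laviolette2005}: $G$ decomposes into edge-disjoint countable pieces $G_\lambda$ such that every finite bond of $G$ lies within a single piece.

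First we fix $S_0$ and choose, for each pair $u,v\in S_0$, a system of $2k$ edge-disjoint $u$--$v$ paths. Let $G'$ be the union of these finitely many finite paths. Next, in the rayless setting, we enlarge $G'$ to a finite subgraph $G''$ in which $S_0$ is $2k$-edge-connected. This can be done because $G''$ can be taken to be $G'$ itself, since it already contains $2k$ edge-disjoint paths between every pair in $S_0$. Then we apply Conjecture~\ref{kriesell} to the finite connected graph $G''$ to obtain $k$ edge-disjoint $S_0$-Steiner trees.

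This observation makes the finite approximation step trivial. The real content is therefore the limit step: ensuring that the $F$-limit of trees is genuinely an \emph{$F$-limit of trees containing $S$}, meaning the approximating trees $T^i_n$ must contain $S$ in the limit. For uncountable $S$ the index set is the set of finite subsets of $S$, and we need an ultrafilter $F$ on it that contains all cones $\{S_0 : S_0\supseteq A\}$. Such an ultrafilter exists because the cones have the finite intersection property. With this ultrafilter, each vertex $s\in S$ lies in $T^i_{S_0}$ for $F$-many $S_0$, so $s\in H^i$.

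The main obstacle I expect is matching the paper's precise definition of $F$-limit, which is likely tied to sequences indexed by $\mathbb{N}$. If the definition requires $\mathbb{N}$-indexed sequences, I would use the bond-faithful decomposition together with raylessness to reduce to countable pieces. The idea is to take a countable $S$-relevant subgraph containing the relevant edge-connectivity witnesses and apply Theorem~\bref{counta} there. If $S$ is uncountable, the rank induction is the tool: delete the finite separating set $X$ given by the rank, treat each component together with $X$ contracted by the induction hypothesis, and glue the resulting limits across $X$ using finitely many edges. Verifying that this gluing preserves the property of being an $F$-limit of trees, and that $2k$-edge-connectivity of $S$ passes to each component after contraction, is the step I expect to be hardest. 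I would first try to handle it by making the ultrafilter indexing uniform across components.
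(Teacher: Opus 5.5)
\textbf{There is a genuine gap: the approximating sequence is indexed by the wrong set.} In the paper, an $F$-limit is taken along an $\mathbb{N}$-indexed sequence with respect to a non-principal ultrafilter on $\mathbb{N}$. Indexing by $[S]^{<\omega}$ with an ultrafilter containing all cones is a different, more generous notion. A warning sign is that your main argument never uses raylessness. It would therefore give the result for every graph, answering the paper's first closing question.

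For countable $S$, your idea does work in the paper's sense. Enumerate $S=\{v_n\}$ and put $S_n=\{v_0,\dots,v_n\}$. Apply Conjecture~\ref{kriesell} to the finite union of $2k$ edge-disjoint paths between pairs of $S_n$, and take $F$-limits coordinatewise. This is shorter than the paper's appeal to Theorems~\ref{counta} and~\ref{teoteo}.

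For uncountable $S$ it breaks down. The vertices of any $\mathbb{N}$-indexed sequence of finite trees form a countable set. So the approximating trees must be infinite: for each $n$ you need $k$ edge-disjoint infinite trees $K^1_n,\dots,K^k_n$ such that every $s\in S$ lies in $K^j_n$ for $F$-many $n$. Constructing these is the entire content of the theorem, and it is exactly the step your last paragraph leaves open.

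\textbf{Your sketched remedy does not close the gap as stated.} After applying the induction to a component $C$ with $X$ contracted and then uncontracting, the trees from $C$ reach $X$ at uncontrolled vertices. Gluing them requires $k$ edge-disjoint connectors among those vertices. Such connectors need not exist: vertices of $X$ need not be $2k$-edge-connected to anything, and connecting paths may need edges already used inside components.

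\textbf{Missing ingredients, as used in the paper.}
\begin{enumerate}
\item Enlarge $S$ to all vertices $2k$-edge-connected to it. Then, by Claims~\ref{clsep} and~\ref{clred}, every reducer whose complement has infinitely many components meeting $S$ itself contains a vertex of $S$. These vertices serve as hubs in place of a contracted $X$.
\item Use Laviolette's bond-faithful decomposition into countable connected fragments (Theorem~\ref{laviolette}, Proposition~\ref{decom}). Then Theorem~\ref{counta} applies inside each fragment, with the \emph{same} ultrafilter on $\mathbb{N}$ for all fragments.
\item Use the reducer hierarchy and the well-foundedness of ordinals to join every fragment containing infinitely many vertices of $S$ to a root fragment containing $X\cap S$. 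The join is a finite chain of fragments, consecutive ones sharing a vertex of $S$.
\end{enumerate}
One then takes $K^j_n$ to be a spanning tree of the component of $\bigcup_D T^j_{D,n}$ containing the root's tree. A finite intersection of members of $F$ along the chain shows that each $s\in S$ lies in $K^j_n$ for $F$-many $n$. Edge-disjointness follows because the fragments partition $E(G)$.
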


The counterexample to Conjecture \ref{kriesell} for infinite graphs is a graph that contains rays. We conclude by presenting a tree-based version for connected, rayless, and finitely edge-separable graphs.

\begin{theoremstar}[\bref{ray}]
	Let $G$ be a connected, finitely edge-separable and rayless graph and $S\subseteq V(G)$ be a $2k$-edge-connected subset. If Conjecture \ref{kriesell} holds, then $G$ contains $k$ pairwise edge-disjoint $S$-Steiner trees. 
\end{theoremstar}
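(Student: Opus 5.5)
The plan is to combine the previous result (Theorem~\ref{final}), which gives $k$ pairwise edge-disjoint $F$-limits of trees containing $S$, with the structure of rayless, finitely edge-separable graphs. The aim is to show that in such graphs each $F$-limit can be pruned to a genuine $S$-Steiner tree. The key observation is that an $F$-limit $H$ of trees $H_n$ (with respect to a non-principal ultrafilter $\mathcal{U}$) is acyclic. Indeed, a finite cycle $C\subseteq H$ has each of its finitely many edges in $H_n$ for $\mathcal{U}$-many $n$. Intersecting these finitely many sets gives some $n$ with $C\subseteq H_n$, which contradicts $H_n$ being a tree. So each $F$-limit is a forest, and the limits are already pairwise edge-disjoint. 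It therefore suffices to show that the component of $H$ containing $S$ is connected, i.e.\ that all of $S$ lies in a single component of $H$. Then taking that component of each limit yields $k$ edge-disjoint $S$-Steiner trees.

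To prove connectivity of $S$ inside $H$, fix $u,v\in S$ and suppose they lie in different components of $H$. By finite edge-separability there is a finite edge set $X$ separating $u$ from $v$ in $G$. For each $n$ with $u,v\in H_n$, the $u$--$v$ path $P_n$ in the tree $H_n$ must use an edge of $X$. Since $X$ is finite, some edge $e\in X$ lies on $P_n$ for $\mathcal{U}$-many $n$, and hence $e\in H$. This alone does not connect $u$ to $v$, so I would instead argue by considering the component $D$ of $u$ in $H$. Suppose, towards a contradiction, that $v\notin D$. I would try to show that the cut $E_G(D, G-D)$ can be replaced by a finite cut. The finitely many edges of a finite cut $Y$ separating $D$'s side from $v$ are each in or out of $H$ $\mathcal{U}$-decisively. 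Edges of $Y$ not in $H$ are absent from $H_n$ for $\mathcal{U}$-many $n$, and the path $P_n$ must cross $Y$. Iterating this over the finite set $Y$ forces some crossing edge to lie in $H$, together with the path segments up to it, provided those segments stabilise.

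The main obstacle is exactly this stabilisation: the segments of $P_n$ between crossings are unbounded and may escape to infinity, which is where raylessness must enter. I would first try to use raylessness via König-type compactness. If the paths $P_n$ did not $\mathcal{U}$-stabilise on finite pieces, one would extract a sequence of edges $\mathcal{U}$-converging along infinitely many distinct branches. In a finitely edge-separable graph, a diagonal argument over finite separating edge sets, using that the paths are connected and vertices of infinite degree are separated from each other by finite cuts, would then produce a ray, contradicting raylessness. If this direct approach is too delicate, the fallback is the bond-faithful decomposition of Section~\ref{rayless}. Rayless graphs admit a well-founded decomposition by rank, so I would induct on the rank: at each stage the relevant pieces are finite graphs with $S$ suitably modified, to which Conjecture~\ref{kriesell} applies directly. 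The resulting finite trees are then glued along the finite bonds, with finite edge-separability guaranteeing that the gluing happens only across finitely many edges at each vertex pair and hence produces honest trees rather than limits.
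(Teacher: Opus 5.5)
\textbf{Genuine gap in the key step.} Your reduction is sound as far as it goes. An $F$-limit of trees contains no finite cycle, and edge-disjointness is inherited from Theorem~\ref{final}. So it does suffice to show that all of $S$ lies in a single component of each limit $H$. But that statement is the entire content of the theorem, and you do not prove it.

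\begin{itemize}
\item The finite-cut argument is abandoned, and the ``stabilisation'' is only hoped for.
\item The mechanism you propose (non-stabilisation of the paths $P_n$ produces a ray) cannot work on raylessness alone. The graph of Example~\ref{ex1}, with infinitely many internally disjoint $u$--$v$ paths of length $3$, is rayless. Yet the $F$-limit of those paths has no edges, so $u$ and $v$ lie in different components. The contradiction has to come from finite edge-separability, in the precise form of Lemma~\ref{lema}, and your sketch never isolates that.
\item The fallback rank induction is not a proof either. Components of $G-X$ need not be finite beyond rank $1$. The set $S\cap V(C)$ need not be $2k$-edge-connected inside a component $C$ of $G-X$, since the connecting paths may run through $X$. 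And gluing infinitely many trees at a finite reducer is exactly the difficulty the paper handles with bond-faithful decompositions.
\end{itemize}

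\textbf{How to close the gap.} Fix $u,v\in S$ and let $P_n$ be the $u$--$v$ path in the $n$th tree. The $F$-limit $L$ of $\langle P_n\rangle$ is contained in $H$.
\begin{enumerate}
\item Every vertex has degree at most $2$ in $L$, and $u,v$ have degree at most $1$, since three edges at a vertex would lie in a common $P_n$. Also $L$ contains no finite cycle.
\item By raylessness, the component of $u$ in $L$ is therefore a finite path from $u$ to some $w$.
\item Suppose $w\neq v$. For $\mathcal{U}$-many $n$ this path is an initial segment of $P_n$. Then $P_n$ leaves $w$ through an edge not in $L$, into the component of $G-w$ containing $v$.
\item By Lemma~\ref{lema} (or because $\deg w$ is finite), only finitely many edges join $w$ to that component. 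So one of them is used for $\mathcal{U}$-many $n$, hence lies in $L$, which is a contradiction.
\end{enumerate}
Thus $H$ contains a $u$--$v$ path, and the component of $H$ containing $S$ is an $S$-Steiner tree.

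\textbf{Comparison with the paper.} With this lemma added, your route (Theorem~\ref{final} followed by pruning) is genuinely different from the paper's and bypasses its countable-case machinery. The paper instead proves the countable case in Theorem~\ref{quase}. There it enumerates $E(v)$ blockwise along the components of $G-v$, so that every end of the expanded graph has degree one. This makes the topological trees from Theorem~\ref{localfin} honest trees. It then lifts to the general case via the fragment-gluing in the proof of Theorem~\ref{final}.
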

In particular, it is not possible to find a counterexample to Theorem \ref{nash} that is a rayless and finitely edge-separable graph.

\begin{corollarystar}[\bref{final1}]
	Let $G$ be a $2k$-edge-connected, rayless and finitely edge-separable graph. Then $G$ contains $k$ pairwise edge-disjoint spanning trees.
\end{corollarystar}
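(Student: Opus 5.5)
The corollary should follow from Theorem~\ref{ray} applied with $S=V(G)$, with the use of Conjecture~\ref{kriesell} replaced by Theorem~\ref{nash}. First the routine part. If $G$ is $2k$-edge-connected, then $G$ is connected and $S:=V(G)$ is a $2k$-edge-connected subset in the sense of the introduction. An $S$-Steiner tree with $S=V(G)$ is a tree of $G$ containing every vertex, that is, a spanning tree. So $k$ pairwise edge-disjoint $V(G)$-Steiner trees are exactly $k$ pairwise edge-disjoint spanning trees. The remaining hypotheses of Theorem~\ref{ray} (rayless and finitely edge-separable) are assumed in the corollary.

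The only issue is that Theorem~\ref{ray} is conditional on Conjecture~\ref{kriesell}, which is open. However, Conjecture~\ref{kriesell} in the special case $S=V(H)$ is precisely Theorem~\ref{nash}, which is known. So I would trace through the proof of Theorem~\ref{ray}, and through the results it relies on (Theorem~\ref{final} and the rayless and bond-faithful decomposition machinery), tracking the terminal set. The claim to verify is that when the initial terminal set is all of $V(G)$, every finite auxiliary instance $(H,S_H)$ to which Conjecture~\ref{kriesell} is applied also has $S_H=V(H)$. The natural reason would be that these finite graphs arise as finite subgraphs or quotients of $G$. In the quotient case, each vertex of $H$ is either an original vertex or a contracted set of vertices of $G$, so every vertex of $H$ contains a vertex of $S$ and therefore lies in $S_H$. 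The $2k$-edge-connectivity of $V(H)$ is inherited from that of $V(G)$, since finite edge cuts of $H$ correspond to edge cuts of $G$. With $S_H=V(H)$, each such invocation is an instance of Theorem~\ref{nash}, and the proof of Theorem~\ref{ray} goes through unconditionally.

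The main obstacle is checking that no auxiliary finite graph in the proof of Theorem~\ref{ray} carries a vertex outside the image of $S$. One example would be a vertex created when a part of the decomposition is collapsed that contains no vertex of $S$; with $S=V(G)$ that cannot happen. A second example would be restricting to a finite subgraph and discarding some vertices from the terminal set. If such a step does occur, I would try to modify it so that the terminal set of the finite instance is the whole vertex set, for instance by contracting the discarded part into one vertex, which then contains vertices of $S$ and can be declared a terminal. Contraction does not decrease the edge-connectivity between the remaining vertices. Once every finite step is an instance of Theorem~\ref{nash}, applying Theorem~\ref{ray} with $S=V(G)$ yields the corollary.
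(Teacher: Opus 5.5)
Your plan is the paper's own proof, which is the single line ``Theorems~\ref{nash} and~\ref{ray} with $S=V(G)$''. You are right that substituting Theorem~\ref{nash} for Conjecture~\ref{kriesell} is not automatic, and the paper does not address this. However, the check you set up fails, and your fallback does not fit the existing machinery.

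\textbf{Why the check fails.} Apart from the trivial finite case, Conjecture~\ref{kriesell} enters the argument behind Theorem~\ref{ray} only through Theorem~\ref{localfin}. There it is applied to a finite graph $G_n$, built as a union of edge-disjoint paths between pairs of vertices of $S_n$, with terminal set $S_n$.
\begin{itemize}
\item Even when $S=V(G)$, the inner vertices of these paths lie in $V(G_n)\setminus S_n$.
\item You cannot just enlarge the terminal set to $V(G_n)$: an inner path vertex may have degree $2$ in $G_n$, so $V(G_n)$ need not be $2k$-edge-connected there.
\item Before Theorem~\ref{localfin} is reached, the proof of Theorem~\ref{quase} passes to $\widetilde G$ with terminal set $\widetilde S=\sigma^{-1}[S]$. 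This set never contains the block vertices $w_n^v$, so even the locally finite instance is not a spanning one.
\end{itemize}

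\textbf{Why the contraction fallback does not work as stated.} Contracting the non-terminal part does give $2k$-edge-connected finite multigraphs, and Nash-Williams--Tutte applies to them. But their spanning trees lift to forests in $G$, not to finite trees containing $S_n$. So Lemma~\ref{limitrees}, which needs finite trees $T_n\subseteq G$, can no longer be used. Making the fallback work would mean redoing the compactness argument behind Theorem~\ref{teo}.

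\textbf{How to close the gap.} Bypass Conjecture~\ref{kriesell} and use the unconditional Theorem~\ref{teo}.
\begin{itemize}
\item Countable $G$: build $\widetilde G$ as in the proof of Theorem~\ref{quase}, with the block-respecting enumeration. With $S=V(G)$, Claim~\ref{c3} makes $A\cup(B\times\mathbb N)$ a $2k$-edge-connected set. Each $w_i^v$ with $2kn\le i<2k(n+1)$ has $2k$ edge-disjoint paths to $(v,n)$: the edge $w_i^v(v,n)$, and the paths $w_i^vw_j^v(v,n)$ through the other $2k-1$ vertices of its block. So $\widetilde G$ itself is $2k$-edge-connected.
\item Theorem~\ref{teo} then gives $k$ edge-disjoint topological spanning trees of $\widetilde G$. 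By Claim~\ref{cray2} these are genuine spanning trees. Their $\rho^{-1}$-images are edge-disjoint, connected, spanning subgraphs of $G$, and you take a spanning tree of each.
\item Uncountable $G$: take the decomposition of Theorem~\ref{laviolette}. Each fragment $D$ is countable, connected, rayless and finitely edge-separable. By Proposition~\ref{decom}, $V(D)$ is $2k$-edge-connected in $D$, so $D$ has edge-disjoint spanning trees $T_D^1,\dots,T_D^k$.
\item The fragments partition $E(G)$ and $G$ is connected, so $\bigcup_D T_D^j$ is a connected spanning subgraph of $G$. These unions are edge-disjoint for distinct $j$, and a spanning tree of each finishes the proof.
\end{itemize}
With $S=V(G)$, none of the chain construction in the proof of Theorem~\ref{final} is needed.
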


\section{Locally finite case}
\label{loc}

The main tool we will use to construct topological $S$-Steiner trees in locally finite graphs is the
$F$-limit.

\begin{definition}
	A \emph{filter} $F$ on $\mathbb{N}$ is a nonempty family of subsets of $\mathbb{N}$ such that
	\begin{itemize}
		\item $\varnothing \notin F$;
		\item if $A \in F$ and $A \subseteq B \subseteq \mathbb{N}$, then $B \in F$;
		\item if $A,B \in F$, then $A \cap B \in F$.
	\end{itemize}
	A filter on $\mathbb{N}$ is called an \emph{ultrafilter} if it is not properly contained in any other filter on $\mathbb{N}$. An ultrafilter is \emph{principal} if it contains a singleton; otherwise, it is \emph{non-principal}.
\end{definition}

We will use the following basic properties of an ultrafilter $F$ on $\mathbb{N}$. For every $A\subseteq\mathbb{N}$, exactly one of $A$ and $\mathbb{N}\setminus A$ belongs to $F$. Consequently, if a finite union of sets belongs to $F$, then at least one of those sets belongs to $F$. If $F$ is non-principal, it contains every cofinite subset of $\mathbb{N}$ and no finite subset. In particular, every finite intersection of members of $F$ belongs to $F$ and is infinite.

\begin{definition} Let $F$ be a non-principal ultrafilter over the natural numbers $\mathbb{N}$, $G$ be a graph 
	and $\langle H_n \rangle_{n \in \mathbb{N}}$ be a sequence of subgraphs of $G$. 
	We say that a subgraph $H \subseteq G$ is the \textit{$F$-limit} of $\langle H_n \rangle_{n \in \mathbb{N}}$ 
	if $H = \langle V_H, E_H\rangle$, where
	$$
	V(H) = \{ v \in V(G) : \{ n : v \in V(H_n)\} \in F \}
	$$
	$$
	E(H) = \{ e \in E(G) : \{ n : e \in E(H_n)\} \in F \}
	$$

\end{definition}

The strategy for the locally finite case is to approximate the graph by a sequence of finite connected subgraphs $G_n$, each containing a subset $S_n \subseteq S$ that remains $2k$-edge-connected in $G_n$. Assuming that Conjecture~\ref{kriesell} holds, each $G_n$ admits $k$ pairwise edge-disjoint $S_n$-Steiner trees. We then consider a suitable $F$-limit of these trees. However, an $F$-limit of trees need not be a topological tree (see Figure~\ref{limit}). We overcome this by showing that a topological $S$-Steiner tree can be extracted from an $F$-limit of trees that contains $S$.

\begin{figure}[htbp]
	\centering
	
	\tikzset{every picture/.style={line width=0.75pt}} 
	

		\caption{An $F$-limit of trees which is not a topological tree.}
		\label{limit}
	\end{figure}
	
	\begin{example}
		
Consider the infinite ladder extending to the right, shown in Figure~\ref{limit}. We enumerate the edges of the upper ray by the natural numbers $\mathbb{N}$ in increasing order from left to right, and the edges of the lower ray by the negative integers in decreasing order from left to right. Moreover, let $e_0$ denote the first rung of the ladder, that is, the first edge joining the upper and lower rays. Denote this graph by $H$.

For each natural number $n \in \mathbb{N}$, let $T_n$ be the subgraph with edge set 
$E(T_n)=\{e_j : -(n+1) \le j \le n+1\}$ and vertex set given by the endpoints of these edges. Then $T_n$ is a path, and in particular a finite tree. Fix a non-principal ultrafilter $F$ over $\mathbb{N}$. We claim that the $F$-limit of $\langle T_n \rangle_{n \in \mathbb{N}}$ is $H$.

Let $H'$ denote the $F$-limit of $\langle T_n \rangle_{n \in \mathbb{N}}$. By construction, $H'$ is a subgraph of $H$.

We first prove that the vertex sets coincide, that is, $V(H') = V(H)$. Let $v \in V(H)$. Then $v$ is incident with some edge $e_k$ for some $k \in \mathbb{Z}$. By the definition of the sequence, $e_k \in E(T_m)$ for all $m \geq |k|$, and hence $v \in V(T_m)$ for all $m \geq |k|$, since $T_{m}\subseteq T_{m+1}$. Thus, the set $\{n \in \mathbb{N} : v \in V(T_n)\}$ is cofinite, and therefore belongs to $F$. It follows that $v \in V(H')$, proving $V(H) \subseteq V(H')$. The reverse inclusion $V(H') \subseteq V(H)$ is immediate from the definition of $H'$. Hence, $V(H') = V(H)$.

We now prove that the edge sets coincide, that is, $E(H') = E(H)$. Let $e_k \in E(H)$, where $k \in \mathbb{Z}$. By construction, $e_k \in E(T_m)$ for all $m \geq |k|$. Hence, the set $\{n \in \mathbb{N} : e_k \in E(T_n)\}$ is cofinite and, since $F$ is a non-principal ultrafilter (and therefore contains every cofinite subset of $\mathbb{N}$), it follows that this set belongs to $F$. It follows that $e_k \in E(H')$, proving $E(H) \subseteq E(H')$. Again, the reverse inclusion $E(H') \subseteq E(H)$ follows from the definition of $H'$. Therefore, $E(H') = E(H)$.

Consequently, $H' = H$. Notice that $H$ is a topological cycle; in particular, it is not a topological tree.
	\end{example}

	\begin{proposition}[\cite{guilherme}]
\label{path1}
		Let $G$ be a locally finite graph and $H$ be a subgraph of $G$ which is the $F$-limit of finite paths between vertices $u$ and $v$. Then there is a topological path $P\subseteq H$ of $G$ between $u$ and $v$.
	\end{proposition}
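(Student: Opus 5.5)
Write $P_n$ for the finite $u$–$v$ paths whose $F$-limit is $H$. The plan is to show first that $H\cup\overline{\Omega}$ is closed and connected in $\textsc{Etop}(G)$, and then extract an arc from it. Here $\overline{\Omega}$ denotes the set of edge-ends lying in the closure of $H$, and we use the standard fact that a compact, connected, metrizable (Hausdorff) space is arc-connected. Since $G$ is locally finite, $\textsc{Etop}(G)$ is the Freudenthal compactification. It is therefore compact and metrizable whenever $G$ is connected and countable; $G$ is countable because a connected locally finite graph is. So the closure $\overline{H}$ of $H$ is compact. Note that $u,v\in V(H)$, since they lie in every $P_n$.

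The central step is to show that $\overline{H}$ is connected. Suppose $\overline{H}=A\cup B$ with $A,B$ disjoint, nonempty and clopen in $\overline{H}$, and $u\in A$. By compactness, and since basic open sets around ends are determined by finite edge sets, we can separate $A$ from $B$ by a finite set of edges. Concretely, there is a finite edge set $X$, which we take to include all edges at the finitely many vertices of $H$ close to the boundary, such that no edge of $H$ outside $X$ joins a point of $A$ to a point of $B$. Since $A$ and $B$ are closed and disjoint, a standard compactness argument in Freudenthal compactifications shows that only finitely many edges of $H$ have one endpoint in $A$ and the other in $B$. Moreover, none of those edges exists at all, since each edge of $H$ is itself connected and so lies entirely in $A$ or entirely in $B$.

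Now pick a vertex $w\in B\cap V(H)$. One must check that $B$ contains a vertex, which holds because every end in $\overline{H}$ is a limit of vertices of $H$ and $B$ is open in $\overline{H}$. Let $X'$ be the finite set of edges of $G$ between the finite vertex set $V(A\text{-side near }u,w)$ and its complement. More precisely, choose a finite set $Y$ of edges of $G$ such that the cut $E(A',B')$ induced by a partition of $V(G)$ separates $u$ from $w$ and all its edges avoid $E(H)$. Since $w\in V(H)$, the set $\{n: w\in P_n\}$ lies in $F$, and $Y$ is finite with each $e\in Y$ satisfying $\{n:e\notin P_n\}\in F$. Hence there is some $n$ for which $P_n$ contains $u$ and $w$, hence a $u$–$w$ subpath, but uses no edge of $Y$. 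This is a contradiction, because $Y$ separates $u$ from $w$. Thus the component of $\overline{H}$ containing $u$ contains every vertex of $H$, and therefore contains $v$.

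Finally, the component of $\overline{H}$ containing $u$ and $v$ is a compact connected metrizable space, so it contains an arc $P$ from $u$ to $v$. It remains to see that $P\subseteq H$ in the sense intended, i.e.\ $P$ consists of points of $H$ together with ends in its closure. Inner points of an edge of $G$ that is not in $H$ are not in $\overline{H}$, so $P$ is a topological path inside the closure of $H$. The main obstacle is the middle step: producing the finite cut $Y$ that avoids $E(H)$ from a clopen partition of $\overline{H}$. I would build $Y$ by covering the compact sets $A$ and $B$ with finitely many basic open sets whose boundaries are finite edge sets, and then choosing these sets so that their boundary edges are not in $H$.
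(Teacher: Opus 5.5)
Your proof has a genuine gap in the last step, and your sketch of the middle step would not work as stated; both can be repaired.

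The last step relies on a false statement: a compact, connected, metrizable space need not be arc-connected. The closed topologist's sine curve is compact, connected and metrizable but not even path-connected. The correct theorem also requires local connectedness (Peano continua), and you never check that $\overline{H}$ is locally connected. This is where the substance of the proposition lies. The argument the paper relies on (Lemmas 6.4--6.6 of \cite{guilherme}) builds the arc directly, order-theoretically, from the paths $P_n$, so it needs no extraction theorem.

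You can repair your route in either of two ways. The first is to cite the lemma that, for locally finite $G$, every closed connected subspace of $\textsc{Etop}(G)$ is arc-connected (see Chapter~8 of \cite{Diestel2025}). The second is to prove local connectedness of $\overline{H}$ directly. Near vertices and inner points of edges this is clear, since there $\overline{H}$ looks like a finite star or an interval. At an end $\omega\in\overline{H}$, take a basic open set $U$ around $\omega$ with $u\notin\overline{U}$ and $\overline{U}$ inside a prescribed neighbourhood. Then $\overline{U}\setminus U$ is finite. By the boundary bumping theorem, applied to the continuum $\overline{H}$ and its proper closed subset $\overline{H}\cap\overline{U}$, every component of $\overline{H}\cap\overline{U}$ meets $\overline{U}\setminus U$. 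Hence there are only finitely many such components, and the one containing $\omega$ is a neighbourhood of $\omega$ in $\overline{H}$.

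Your sketch for producing $Y$ also does not work as stated. You cannot require the basic neighbourhoods of ends in $A$ to have no edge of $H$ on their boundary: a ray of $H$ entering such a neighbourhood from outside crosses its boundary along an edge of $H$. Imposing this is essentially what you are trying to prove. The following construction does work.

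Take disjoint open sets $W_A\supseteq A$ and $W_B\supseteq B$ of $\textsc{Etop}(G)$. Cover the compact end space by finitely many basic open sets $U_1,\dots,U_m$, given by finite edge sets $Z_1,\dots,Z_m$, each contained in $W_A$, contained in $W_B$, or disjoint from $\overline{H}$. Put $Z=Z_1\cup\dots\cup Z_m$. Every infinite component of $G-Z$ contains an end and hence lies in some $U_j$, and $G-Z$ has only finitely many components. Let $V_1$ consist of the vertices of the infinite components lying in $W_A$ or missing $\overline{H}$, together with the vertices of finite components that are not in $B$. Let $V_2=V(G)\setminus V_1$.

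Then $Y=E(V_1,V_2)$ is contained in $Z$ together with the finitely many edges at vertices of finite components, so it is finite. It separates $u$ from $w$. It contains no edge of $H$, because each edge of $H$ lies entirely in $A$ or entirely in $B$.

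Alternatively, cite the standard criterion that a standard subspace of $\textsc{Etop}(G)$ is connected if and only if it contains an edge from every finite cut of $G$ of which it meets both sides. Together with your ultrafilter argument, this gives connectedness of $\overline{H}$ at once.

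With both repairs, your topological route is a valid alternative to the order-theoretic one. It reduces the proposition to standard facts about $\textsc{Etop}(G)$ plus a short ultrafilter argument.
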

	
Here we present a version of Proposition~\ref{path1} for $F$-limits of topological paths. Before that, it is useful to recall how the finite path case was established.
For the construction of topological paths, some elements of order theory will be used.
Let $G$ be locally finite and let $P \subseteq \|G\|$ be a topological path. Define a linear order on $P$ by setting $x \preceq y$ if $f^{-1}(x) \leq  f^{-1}(y)$, where $f : [0,1] \to P$ is a homeomorphism.
The proof will be omitted, as it is essentially identical to the argument for finite paths (see~\cite[Lemmas 6.4, 6.5 and 6.6]{guilherme}). We do not pursue these order-theoretic aspects further, as this would take us too far from the main focus of the paper.
	
		\begin{proposition}
			\label{path}
		Let $G$ be a locally finite graph and $H$ be a subgraph of $G$ which is the $F$-limit of topological paths between vertices $u$ and $v$. Then there is a topological path $P\subseteq H$ of $G$ between $u$ and $v$.
	\end{proposition}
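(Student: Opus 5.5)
Here $H$ is the $F$-limit of topological $u$--$v$ paths $P_n \subseteq \textsc{Etop}(G)$. Since $G$ is locally finite, each $P_n$ is the closure of its edge set: every inner point of an edge on $P_n$ forces the whole edge onto $P_n$, and every end on $P_n$ is a limit of vertices of $P_n$. Thus $P_n$ is determined by a subgraph of $G$, and ``$H \subseteq$'' is read as the closure $\overline{H}$ of $H$ in $\textsc{Etop}(G)$. The plan is to reduce to the same compactness-and-order argument used for Proposition~\ref{path1}, replacing finite paths by topological arcs.

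\textbf{Step 1 (arc-connectedness of $\overline{H}$).} I will not build the path edge by edge. Instead I will show that $\overline{H}$ is a compact, connected, metrizable subspace containing $u$ and $v$; an arc is then obtained because compact connected metrizable spaces (continua) are arcwise connected once they are locally connected, or alternatively by the order argument of \cite[Lemmas 6.4--6.6]{guilherme}.

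\textbf{Step 2 (metrizability and compactness).} $\textsc{Etop}(G)=|G|$ for connected locally finite countable $G$ is a compact metrizable space, so the closed subset $\overline{H}$ is compact. $G$ may be assumed connected, since the component of $u$ contains every $P_n$.

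\textbf{Step 3 (connectedness, the key step).} Suppose $\overline{H}$ splits into disjoint nonempty clopen parts $A\ni u$ and $B\ni v$. By compactness and local finiteness, I will find a finite set $X$ of edges of $G$, consisting of edges not in $H$, that separates $A$ from $B$: a finite cut in $G$ whose sides contain $A\cap V$ and $B\cap V$, with the ends handled by the basic open sets $\Omega_E(X,\cdot)$.

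Each $P_n$ is an arc from $u$ to $v$, so it must cross this finite cut. A topological arc in $|G|$ meets a finite cut $X$ only through edges of $X$, because ends do not lie on finite cuts. Hence some edge of $X$ lies on $P_n$. Since $X$ is finite, the ultrafilter gives a single edge $e\in X$ with $\{n: e\in P_n\}\in F$, so $e\in E(H)$. This contradicts the choice of $X$.

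\textbf{Step 4 (local connectedness).} For local connectedness at ends, I will use the same trick in each basic neighbourhood $\Omega_E(X,\omega)\cup C\cup E(X,\omega)$ with $X$ finite. Each point of $\overline{H}$ near $\omega$ is joined inside $\overline{H}$ to $\omega$ without leaving a slightly larger basic neighbourhood, by applying the Step 3 argument to subarcs of the $P_n$.

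I expect Step 4, or equivalently the order-theoretic extraction replacing it, to be the main obstacle. Step 3 is where the ultrafilter and the finiteness of cuts do the essential work.
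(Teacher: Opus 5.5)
Your route is genuinely different from the paper's. The paper omits the proof and defers to the order-theoretic argument of \cite[Lemmas 6.4--6.6]{guilherme}. There, the parametrisations of the $P_n$ induce, via the ultrafilter, a linear order on $V(H)$ in which consecutive vertices are joined by edges of $H$. The path is then read off from this order by inserting ends into the gaps.

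You instead show that $\overline{H}$ is a Peano continuum and quote the arc theorem. This gives a stronger conclusion and handles topological paths as inputs at no extra cost, using only the fact that an ultrafilter selects an edge of a finite cut. The price is reliance on metrizability of $|G|$ (fine here, since $G$ is countable) and on a non-constructive theorem, whereas the order argument describes the path explicitly.

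Steps 2 and 3 are sound. $A\cap V(G)$ and $B\cap V(G)$ have disjoint closures, so a K\"onig argument over an exhausting sequence of finite vertex sets gives the finite cut. That cut contains no edge of $H$, because each edge of $H$ lies in $A$ or in $B$. As phrased, Step 3 only puts $u$ and $v$ in one quasi-component, hence one component by compactness. That suffices at the end, but Step 4 needs the version where $B$ contains an arbitrary vertex $w$ of the limit graph, using the $F$-many arcs through $w$.

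Step 4 is too thin as written, and its obvious reading fails. The end $\omega$ need not lie on any $P_n$, and the subarc of $P_n$ between two vertices of $H$ close to $\omega$ may leave $\hat C(X,\omega)$ through $X$ and come back. The missing ingredient is a uniform bound on crossings:
\begin{enumerate}
\item Put $C=C(X,\omega)$. Each $P_n$ passes between $\overline{C}$ and the rest of $|G|$ only along edges of $X$, each at most once. So $P_n\cap\overline{C}$ consists of at most $|X|+1$ arcs, with endpoints in the finite set formed by $u$, $v$ and the vertices of $C$ incident with $X$.
\item The ultrafilter fixes the number and endpoints of these arcs on a set in $F$. This yields finitely many $F$-limits $H^1,\dots,H^m\subseteq H$ of arcs inside $\overline{C}$.
\item The general form of Step 3 shows that each $\overline{H^j}$ is connected.
\item The finite-union property of $F$, together with $m<\infty$ for ends, shows that $\bigcup_j\overline{H^j}$ contains every vertex, edge and end of $\overline{H}$ lying in $\hat C(X,\omega)$.
\item Shrink the neighbourhood so that it misses the finitely many closed sets $\overline{H^j}$ not containing $\omega$. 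This shows that $\overline{H}$ is connected im kleinen at $\omega$; at vertices and inner points of edges this is immediate.
\end{enumerate}
With this filled in, your argument proves the proposition.
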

	
\begin{lemma}
	\label{limitrees}
	Let $G$ be a locally finite connected graph and let $H\subseteq G$ be the $F$-limit of finite trees $T_n$, 
	where $S_n \subseteq V(T_n)$ and $S_n \subseteq S_{n+1}$, with each $S_n$ nonempty. 
	Then $H$ contains a topological tree $T$ such that 
	$$
	S = \bigcup_{n\in\mathbb{N}} S_n \subseteq V(T)
	$$
\end{lemma}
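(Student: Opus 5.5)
The plan is to show first that $H$ contains $S$ and is topologically path-connected on its vertices, and then to extract a topological tree from the closure of $H$ by a minimality (Zorn-type) argument.

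\emph{Step 1: $S\subseteq V(H)$.} Fix $s\in S$, so $s\in S_m$ for some $m$. Since the $S_n$ are increasing, $s\in V(T_n)$ for all $n\ge m$. This is a cofinite set, hence it lies in $F$, so $s\in V(H)$.

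\emph{Step 2: connectivity inside $H$.} Fix a base vertex $s_0\in S_0$. For any $s\in S$, the tree $T_n$ contains a unique finite $s_0$–$s$ path $P_n$ for all $n\ge m(s)$. Setting $P_n=T_n$ trivially for the finitely many small $n$ does not affect the $F$-limit. The $F$-limit $P$ of $\langle P_n\rangle$ is contained in $H$, because $\{n: e\in P_n\}\subseteq\{n:e\in T_n\}$. Proposition~\ref{path1} then gives a topological $s_0$–$s$ path inside $P\subseteq H$. The same argument applies to any vertex of $H$: for $v\in V(H)$ we have $\{n: v\in T_n\}\in F$, and we use the $s_0$–$v$ path in $T_n$ on that set. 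Consequently the closure $\overline{H}$ of $H$ in $\textsc{Etop}(G)$ contains a path-connected subspace $X$, namely the union of all these topological paths, and $S\subseteq X$.

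\emph{Step 3: extracting the tree.} Consider the family $\mathcal{C}$ of closed subspaces $Y\subseteq\overline{H}$ (standard subspaces, i.e.\ closures of edge sets together with their vertices and ends) that are topologically connected and contain $S$. This family is nonempty: $\overline{X}$ belongs to it. We order $\mathcal{C}$ by inclusion and apply Zorn's lemma downward. For a chain, the intersection is again closed and contains $S$. It is also connected, since $\textsc{Etop}(G)$ is compact for locally finite connected $G$, and a nested intersection of compact connected Hausdorff sets is connected. Hence there is a minimal element $T$. If $T$ contained a topological circle $C$, then deleting an inner point of an edge $e\in C$ (that is, removing $e$) leaves $T$ still connected, closed, and containing $S$, contradicting minimality. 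So $T$ contains no circle. Finally, for locally finite graphs, closed connected standard subspaces are arc-connected (Diestel, Chapter~8), so $T$ is path-connected and hence a topological tree with $S\subseteq V(T)$.

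\emph{Main obstacle.} The main difficulty is Step 3: making sure the minimal element is a genuine topological tree in the paper's sense, which requires path-connectedness and not merely connectedness. It also requires that the removal of a circle edge preserves connectedness in the end-space topology. I would rely on the standard facts for locally finite graphs (compactness of $|G|$ and arc-connectedness of closed connected subspaces) and check that the vertices of $S$ are never lost when passing to intersections.
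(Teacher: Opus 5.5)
Your proof is correct. Steps 1 and 2 match the paper, but Step 3 takes a genuinely different route.

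\textbf{The paper's route.} The paper enumerates the countably many edges of $H$ as $e_0,e_1,\dots$. It deletes $e_n$ exactly when $e_n$ lies on a topological circle of the current subgraph $\widetilde T_n$, and sets $T=\bigcap_n\widetilde T_n$. Acyclicity holds because an edge on a circle of $T$ would have been deleted at its own step. Path-connectedness of $T$ comes from Proposition~\ref{path}: topological $u$--$v$ paths $P_n\subseteq\widetilde T_n$ have an $F$-limit lying in $T$, since the $\widetilde T_n$ decrease, and that $F$-limit contains a topological $u$--$v$ path.

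\textbf{Your route.} You take a minimal closed connected subset of $\overline H$ containing $S$ via Zorn's lemma. Compactness of $|G|$ keeps chain intersections connected. Minimality rules out circles, because removing the interior of an edge $e$ on a circle $C$ preserves connectedness: the endpoints of $e$ stay joined by the arc $C\setminus\mathring e$, so any separation of $T\setminus\mathring e$ would extend to a separation of $T$. Finally, Diestel's lemma that closed connected subspaces of $|G|$ are arc-connected recovers path-connectedness.

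\textbf{What each buys.} Your argument rests only on standard, documented facts about $|G|$, and it replaces Proposition~\ref{path}, whose proof the paper omits. It also works with plain connectedness, which passes to limits more easily than path-connectedness. The price is that it is non-constructive. The paper's argument is an explicit countable deletion process that stays inside the $F$-limit framework used throughout, and it produces $T$ directly as a subgraph of $H$.

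\textbf{A small remark.} Restricting $\mathcal C$ to standard subspaces is unnecessary. Keeping it obliges you to check that the intersection of a chain of standard subspaces is again standard. That is true here, since an end of the intersection not in the closure of its edges would give a clopen piece avoiding $S\neq\varnothing$, but you do not verify it. Both the circle-removal step and the arc-connectedness lemma work for arbitrary closed connected subsets of $\overline H$, so you can simply drop the word.
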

	\begin{proof}
Fix a non-principal ultrafilter $F$ over $\mathbb{N}$. We begin by showing that $H$ is path-connected. Let $u,v \in V(H)$. Then 
\[
A=\{n\in\mathbb{N}:u,v\in V(T_n)\}\in F
\]
by the definition of an $F$-limit. Since each $T_n$ is a tree, for every $n \in A$ there exists a path $P_n \subseteq T_n$ between $u$ and $v$. As $G$ is locally finite, Proposition~\ref{path1} implies that the $F$-limit of $\langle P_n\rangle_{n\in A}$ yields a topological path in $H$ between $u$ and $v$. Hence, $H$ is path-connected.

Consider an enumeration of the edges $E(H)=\{e_i\}_{i\in\mathbb{N}}$. The idea is to remove only edges that lie in topological circles of $G$ within $H$. If $e_0$ lies in a topological circle of $G$ in $H$, remove $e_0$ and set $\widetilde{T}_0 = H - e_0$; otherwise, let $\widetilde{T}_0 = H$. In general, for each $n \in \mathbb{N}$ with $n>0$, define
\[
\widetilde{T}_{n+1}=
\begin{cases}
	\widetilde{T}_n - e_n, & \text{if } e_n \text{ lies in a topological circle of } G \text{ in } \widetilde{T}_n,\\
	\widetilde{T}_n, & \text{otherwise}.
\end{cases}
\]
Note that $\widetilde{T}_{n+1} \subseteq \widetilde{T}_n$ for all $n \in \mathbb{N}$.

Define $T=\bigcap_{i\in\mathbb{N}} \widetilde{T}_i$. We claim that $T$ is a topological tree and that $S \subseteq V(T)$.

First, we show that each $\widetilde{T}_n$ is path-connected. Since $H$ is path-connected and $\widetilde{T}_0$ is obtained from $H$ by removing at most one edge lying in a topological circle, it follows that $\widetilde{T}_0$ remains path-connected. By the same argument, if $\widetilde{T}_n$ is path-connected, then so is $\widetilde{T}_{n+1}$. Hence, $\widetilde{T}_n$ is path-connected for every $n \in \mathbb{N}$.

We now show that $T$ is path-connected. Let $u,v \in V(T)$. For each $n \in \mathbb{N}$, since $\widetilde{T}_n$ is path-connected, there exists a topological path $P_n\subseteq\widetilde{T_n}$ between $u$ and $v$. By Proposition~\ref{path}, the $F$-limit of $\langle P_n\rangle_{n\in\mathbb{N}}$ contains a topological path $P$ between $u$ and $v$ in $T$. Thus, $T$ is path-connected.

It remains to show that $T$ contains no topological circles. Suppose, for a contradiction, that $T$ contains a topological circle. Then there exists an edge $e_k \in E(T)$ that lies in a topological circle of $T$, contradicting the construction of $\widetilde{T}_{k+1}$, since such an edge would have been removed at step $k$.

Finally, we verify that $S \subseteq V(T)$. Clearly, $S=\bigcup_{n\in\mathbb{N}} S_n \subseteq V(H)$, since for each $v \in S$ there exists $n \in \mathbb{N}$ such that $v \in S_n$, and as $S_n \subseteq S_{n+1}$, we have $v \in S_m$ for all $m \ge n$. Hence, $
\{m \in \mathbb{N} : v \in V(T_m)\} \in F,
$
and therefore $v \in V(H)$. Since we only remove edges in the construction, it follows that $v \in V(T)$, and thus $S \subseteq V(T)$.
	\end{proof}

	\begin{theorem}
	Conjectures~\ref{kriesell} and~\ref{conj} are equivalent. \label{localfin}
	\end{theorem}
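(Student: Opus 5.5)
We prove the two implications separately; the direction from Conjecture~\ref{kriesell} to Conjecture~\ref{conj} carries essentially all of the work.

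\emph{Conjecture~\ref{conj} implies Conjecture~\ref{kriesell}.} Let $G$ be a finite connected graph with a $2k$-edge-connected set $S$. We build a connected, locally finite, infinite graph $G'$ by attaching a ray to one vertex $v_0$ of $G$. No path between two vertices of $G$ can enter the attached ray, since the ray is a pendant structure. Hence $S$ is still $2k$-edge-connected in $G'$, and any $u$--$v$ paths for $u,v\in S$ already lie in $G$. Conjecture~\ref{conj} gives $k$ pairwise edge-disjoint topological $S$-Steiner trees $T_1,\dots,T_k$ in $\textsc{Etop}(G')$. We take $T_i' = T_i\cap G$.

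\begin{itemize}
\item $T_i'$ is path-connected: every arc in $T_i$ between points of $G$ stays in $G$, because leaving $G$ means entering the pendant ray, which can only be exited back through $v_0$, and an arc cannot repeat $v_0$.
\item $T_i'$ contains no topological circle, since $T_i$ contains none.
\end{itemize}

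So $T_i'$ is a finite connected acyclic subgraph, possibly after discarding partial edges, i.e.\ a genuine $S$-Steiner tree. The trees $T_1',\dots,T_k'$ inherit edge-disjointness, which proves Conjecture~\ref{kriesell}. If $S$ is a single vertex the statement is trivial.

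\emph{Conjecture~\ref{kriesell} implies Conjecture~\ref{conj}.} Let $G$ be locally finite and connected, and let $S$ be $2k$-edge-connected. Since $G$ is countable, enumerate $S=\{s_0,s_1,\dots\}$ and put $S_n=\{s_0,\dots,s_n\}$. For each pair in $S_n$ we fix $2k$ edge-disjoint paths; there are finitely many of them. Let $G_n$ be a finite connected subgraph containing all these paths together with a fixed path system connecting $S_n$, and choose $G_n\subseteq G_{n+1}$. Then $S_n$ is $2k$-edge-connected in $G_n$.

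Conjecture~\ref{kriesell} yields edge-disjoint $S_n$-Steiner trees $T^1_n,\dots,T^k_n$ in $G_n$. Fix a non-principal ultrafilter $F$ and let $H^i$ be the $F$-limit of $\langle T^i_n\rangle_n$.

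\begin{itemize}
\item The $H^i$ are pairwise edge-disjoint. If an edge $e$ lay in both $H^i$ and $H^j$, then the two index sets $\{n: e\in E(T^i_n)\}$ and $\{n : e\in E(T^j_n)\}$ would both lie in $F$. Their intersection is then in $F$ and nonempty, contradicting the disjointness of $T^i_n$ and $T^j_n$.
\item By Lemma~\ref{limitrees}, each $H^i$ contains a topological tree $T^i$ with $S=\bigcup_n S_n\subseteq V(T^i)$.
\end{itemize}

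Since $T^i\subseteq H^i$, the trees $T^i$ are the required pairwise edge-disjoint topological $S$-Steiner trees.

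The main obstacle is the path-connectivity argument inside Lemma~\ref{limitrees}, but that lemma is already available. What remains delicate is the first direction: making precise that restricting a topological tree to the finite part $G$ yields a graph-theoretic tree, i.e.\ that edge-ends play no role there.
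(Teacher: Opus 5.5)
Your proposal is correct and follows the paper's proof in both directions. For Conjecture~\ref{kriesell} $\Rightarrow$ Conjecture~\ref{conj}, you use finite subgraphs $G_n$ carrying $2k$ edge-disjoint paths between pairs of $S_n$, apply Conjecture~\ref{kriesell} in each $G_n$, and then take $F$-limits with Lemma~\ref{limitrees} and the ultrafilter argument for edge-disjointness. For the converse, you attach a ray at one vertex of $G$ and restrict the topological trees to $G$. The only minor difference is in this converse step: you argue directly that an arc between points of $G$ cannot enter the pendant ray without repeating $v_0$, whereas the paper first shows that the enlarged graph has a single end of degree one, so every topological tree there is an ordinary tree, and only then restricts to $G$.
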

	
	\begin{proof}
	Let $G$ be a connected locally finite graph and $S \subseteq V(G)$ be a $2k$-edge-connected subset. Consider an enumeration $S=\{v_n\}_{n\in\mathbb{N}}$ and, for each $n\in\mathbb{N}$, let $S_n=\{v_0,v_1,\dots, v_n\}$. Note that $S_n\subseteq S_{n+1}$ and each $S_n$ is nonempty.
	
	Let $G_n$ be the subgraph of $G$ obtained as the union of all edge-disjoint paths between every pair of vertices in $S_n$. Since $G$ is locally finite, it follows that $G_n$ is a finite connected subgraph. Moreover, $S_n$ is $2k$-edge-connected in $G_n$. Assuming Conjecture~\ref{kriesell}, for each $n\in\mathbb{N}$ there exist $k$ pairwise edge-disjoint $S_n$-Steiner trees
	$
	T^1_n, T^2_n, \dots, T^k_n
	$
	in $G_n$.
	
	Fix a non-principal ultrafilter $F$ over $\mathbb{N}$. For each $1\le i\le k$, let $H_i$ be the $F$-limit of $\langle T^i_n\rangle_{n\in\mathbb{N}}$. Note that $S\subseteq V(H_i)$, since $S_n\subseteq S_{n+1}$. By Lemma~\ref{limitrees}, we obtain a topological tree $T_i\subseteq H_i$ such that $S\subseteq V(T_i)$.
	
	We claim that the topological trees $T_1,T_2,\dots,T_k$ are pairwise edge-disjoint. Suppose, for a contradiction, that there exist distinct indices $i$ and $j$ and an edge $e\in E(T_i)\cap E(T_j)$. Then the set
$
	\{n\in\mathbb{N} : e\in E(T^i_n)\cap E(T^j_n)\}
	$ belongs to $F$, since $F$ is closed under finite intersections; in particular, it is nonempty. This contradicts the fact that $T^i_n$ and $T^j_n$ are edge-disjoint for every $n\in\mathbb{N}$. This proves that Conjecture \ref{conj} holds.

Conversely, assume that Conjecture \ref{conj} holds. Let $G$ be a finite connected graph and let $S \subseteq V(G)$ be a $2k$-edge-connected subset. Let $r=v_0v_1v_2\cdots$ be a ray such that $v_0=u \in V(G)$ and $v_n\notin V(G)$ for $n\neq 0$. Consider the graph $H=G\cup r$. Notice that $H$ is connected, infinite and locally finite.

Since $G\subseteq H$, the set $S$ is also $2k$-edge-connected in $H$. Therefore, there exist $k$ pairwise edge-disjoint topological $S$-Steiner trees $T_1,T_2,\ldots,T_k$ in $H$ with $\textsc{Etop}(H)$. 

We prove some claims concerning the ends and topological trees of $H$.
Recall that the degree of an end is the maximum number of pairwise vertex-disjoint rays representing it.

	\begin{claim}
		$H$ has exactly one end, and this end has degree $1$.
	\end{claim}
	
	\begin{proof}
		Notice that every ray of $H$ is either a tail of $r$ or starts in $G$, passes through $u$, and then continues along $r$, since $G$ is finite. Hence, the unique end of $H$ is $[r]\in \Omega(H)$. Moreover, this end has degree $1$, because every ray contains a tail of $r$.
	\end{proof}
	
	\begin{claim}
		\label{arv}
		A topological tree $T$ in $H$ with $\textsc{Etop}(H)$ is a tree of $H$.
	\end{claim}
	
	\begin{proof}
		Let $u,v\in V(T)$. Since $T$ is a topological tree, there exists a topological path $P\subseteq T$ between $u$ and $v$. Since $H$ has only one end of degree $1$, it follows that $P$ is a finite path. Therefore, $T$ is a connected subgraph. Hence, $T$ is a tree of $H$.
	\end{proof}
	By Claim~\ref{arv}, it follows that $T_1,T_2,\ldots,T_k$ are pairwise edge-disjoint $S$-Steiner trees of $H$. We show that
	$
	G\cap T_1,\; G\cap T_2,\; \ldots,\; G\cap T_k
	$
	are pairwise edge-disjoint $S$-Steiner trees of $G$. Since $G\cap T_i$ contains no finite cycle for every $1\leq i\leq k$, it suffices to show that $G\cap T_i$ is connected.
	
	Let $w,v\in V(G\cap T_i)$. Since $T_i$ is a tree, there exists a path $P\subseteq T_i$ between $w$ and $v$. Since $G$ is connected to $r$ only through the vertex $u$, it follows that $P\subseteq G$. Hence, $G\cap T_i$ is connected, and therefore it is a tree. Moreover, since each $T_i$ contains $S$ in its vertex set, we have
	$
	S\subseteq V(G\cap T_i)
	$
	for every $1\leq i\leq k$. Finally, because
	$
	E(T_i)\cap E(T_j)=\emptyset
	$
	whenever $i\neq j$, it follows that
	$
	E(G\cap T_i)\cap E(G\cap T_j)=\emptyset.
	$
	Therefore, $G$ contains $k$ pairwise edge-disjoint $S$-Steiner trees.
	Hence, Conjecture~\ref{kriesell} holds.
	\end{proof}

	\section{Countable case}
	\label{enum}

	For the proof of the countable case, we begin by defining the expanded graph $\widetilde{G}$ by preserving the vertices with finitely many incident edges and transforming each other vertex $v$ into a $2k$-ray (see Figure \ref{2kray}),  denoted by $r_v$, with the edge $e_i^v$ incident on the $i$-th element of it. We follow the construction of~\cite{guilherme} for multigraphs.
	
	\begin{definition}
Let $r \in \mathcal{R}(G)$ be a ray in a graph $G$, and let $k \in \mathbb{N}$ be a natural number. A $2k$-ray is obtained from $r$ by replacing each edge $e \in E(r)$ with a complete graph $K_{2k}$ and joining the endpoints of $e$ to all vertices of $K_{2k}$.

	\end{definition}
	
	\begin{figure}[ht]
		\centering
		\tikzset{every picture/.style={line width=0.75pt}} 
		
		\begin{tikzpicture}[x=0.75pt,y=0.75pt,yscale=-1,xscale=1,rotate=90]
			
			\draw    (100,121) -- (100,139.6) ;
			\draw [shift={(100,139.6)}, rotate = 90] [color={rgb, 255:red, 0; green, 0; blue, 0 }  ][fill={rgb, 255:red, 0; green, 0; blue, 0 }  ][line width=0.75]      (0, 0) circle [x radius= 3.35, y radius= 3.35]   ;
			\draw [shift={(100,121)}, rotate = 90] [color={rgb, 255:red, 0; green, 0; blue, 0 }  ][fill={rgb, 255:red, 0; green, 0; blue, 0 }  ][line width=0.75]      (0, 0) circle [x radius= 3.35, y radius= 3.35]   ;
			\draw    (120,121) -- (120,139.6) ;
			\draw [shift={(120,139.6)}, rotate = 90] [color={rgb, 255:red, 0; green, 0; blue, 0 }  ][fill={rgb, 255:red, 0; green, 0; blue, 0 }  ][line width=0.75]      (0, 0) circle [x radius= 3.35, y radius= 3.35]   ;
			\draw [shift={(120,121)}, rotate = 90] [color={rgb, 255:red, 0; green, 0; blue, 0 }  ][fill={rgb, 255:red, 0; green, 0; blue, 0 }  ][line width=0.75]      (0, 0) circle [x radius= 3.35, y radius= 3.35]   ;
			\draw    (100,121) -- (120,121) ;
			\draw [shift={(120,121)}, rotate = 0] [color={rgb, 255:red, 0; green, 0; blue, 0 }  ][fill={rgb, 255:red, 0; green, 0; blue, 0 }  ][line width=0.75]      (0, 0) circle [x radius= 3.35, y radius= 3.35]   ;
			\draw [shift={(100,121)}, rotate = 0] [color={rgb, 255:red, 0; green, 0; blue, 0 }  ][fill={rgb, 255:red, 0; green, 0; blue, 0 }  ][line width=0.75]      (0, 0) circle [x radius= 3.35, y radius= 3.35]   ;
			\draw    (100,139.6) -- (120,139.6) ;
			\draw [shift={(120,139.6)}, rotate = 0] [color={rgb, 255:red, 0; green, 0; blue, 0 }  ][fill={rgb, 255:red, 0; green, 0; blue, 0 }  ][line width=0.75]      (0, 0) circle [x radius= 3.35, y radius= 3.35]   ;
			\draw [shift={(100,139.6)}, rotate = 0] [color={rgb, 255:red, 0; green, 0; blue, 0 }  ][fill={rgb, 255:red, 0; green, 0; blue, 0 }  ][line width=0.75]      (0, 0) circle [x radius= 3.35, y radius= 3.35]   ;
			\draw    (100,121) -- (120,139.6) ;
			\draw [shift={(120,139.6)}, rotate = 42.92] [color={rgb, 255:red, 0; green, 0; blue, 0 }  ][fill={rgb, 255:red, 0; green, 0; blue, 0 }  ][line width=0.75]      (0, 0) circle [x radius= 3.35, y radius= 3.35]   ;
			\draw [shift={(100,121)}, rotate = 42.92] [color={rgb, 255:red, 0; green, 0; blue, 0 }  ][fill={rgb, 255:red, 0; green, 0; blue, 0 }  ][line width=0.75]      (0, 0) circle [x radius= 3.35, y radius= 3.35]   ;
			\draw    (120,121) -- (100,139.6) ;
			\draw [shift={(100,139.6)}, rotate = 137.08] [color={rgb, 255:red, 0; green, 0; blue, 0 }  ][fill={rgb, 255:red, 0; green, 0; blue, 0 }  ][line width=0.75]      (0, 0) circle [x radius= 3.35, y radius= 3.35]   ;
			\draw [shift={(120,121)}, rotate = 137.08] [color={rgb, 255:red, 0; green, 0; blue, 0 }  ][fill={rgb, 255:red, 0; green, 0; blue, 0 }  ][line width=0.75]      (0, 0) circle [x radius= 3.35, y radius= 3.35]   ;
			\draw    (100,74) -- (100,92.6) ;
			\draw [shift={(100,92.6)}, rotate = 90] [color={rgb, 255:red, 0; green, 0; blue, 0 }  ][fill={rgb, 255:red, 0; green, 0; blue, 0 }  ][line width=0.75]      (0, 0) circle [x radius= 3.35, y radius= 3.35]   ;
			\draw [shift={(100,74)}, rotate = 90] [color={rgb, 255:red, 0; green, 0; blue, 0 }  ][fill={rgb, 255:red, 0; green, 0; blue, 0 }  ][line width=0.75]      (0, 0) circle [x radius= 3.35, y radius= 3.35]   ;
			\draw    (120,74) -- (120,92.6) ;
			\draw [shift={(120,92.6)}, rotate = 90] [color={rgb, 255:red, 0; green, 0; blue, 0 }  ][fill={rgb, 255:red, 0; green, 0; blue, 0 }  ][line width=0.75]      (0, 0) circle [x radius= 3.35, y radius= 3.35]   ;
			\draw [shift={(120,74)}, rotate = 90] [color={rgb, 255:red, 0; green, 0; blue, 0 }  ][fill={rgb, 255:red, 0; green, 0; blue, 0 }  ][line width=0.75]      (0, 0) circle [x radius= 3.35, y radius= 3.35]   ;
			\draw    (100,74) -- (120,74) ;
			\draw [shift={(120,74)}, rotate = 0] [color={rgb, 255:red, 0; green, 0; blue, 0 }  ][fill={rgb, 255:red, 0; green, 0; blue, 0 }  ][line width=0.75]      (0, 0) circle [x radius= 3.35, y radius= 3.35]   ;
			\draw [shift={(100,74)}, rotate = 0] [color={rgb, 255:red, 0; green, 0; blue, 0 }  ][fill={rgb, 255:red, 0; green, 0; blue, 0 }  ][line width=0.75]      (0, 0) circle [x radius= 3.35, y radius= 3.35]   ;
			\draw    (100,92.6) -- (120,92.6) ;
			\draw [shift={(120,92.6)}, rotate = 0] [color={rgb, 255:red, 0; green, 0; blue, 0 }  ][fill={rgb, 255:red, 0; green, 0; blue, 0 }  ][line width=0.75]      (0, 0) circle [x radius= 3.35, y radius= 3.35]   ;
			\draw [shift={(100,92.6)}, rotate = 0] [color={rgb, 255:red, 0; green, 0; blue, 0 }  ][fill={rgb, 255:red, 0; green, 0; blue, 0 }  ][line width=0.75]      (0, 0) circle [x radius= 3.35, y radius= 3.35]   ;
			\draw    (100,74) -- (120,92.6) ;
			\draw [shift={(120,92.6)}, rotate = 42.92] [color={rgb, 255:red, 0; green, 0; blue, 0 }  ][fill={rgb, 255:red, 0; green, 0; blue, 0 }  ][line width=0.75]      (0, 0) circle [x radius= 3.35, y radius= 3.35]   ;
			\draw [shift={(100,74)}, rotate = 42.92] [color={rgb, 255:red, 0; green, 0; blue, 0 }  ][fill={rgb, 255:red, 0; green, 0; blue, 0 }  ][line width=0.75]      (0, 0) circle [x radius= 3.35, y radius= 3.35]   ;
			\draw    (120,74) -- (100,92.6) ;
			\draw [shift={(100,92.6)}, rotate = 137.08] [color={rgb, 255:red, 0; green, 0; blue, 0 }  ][fill={rgb, 255:red, 0; green, 0; blue, 0 }  ][line width=0.75]      (0, 0) circle [x radius= 3.35, y radius= 3.35]   ;
			\draw [shift={(120,74)}, rotate = 137.08] [color={rgb, 255:red, 0; green, 0; blue, 0 }  ][fill={rgb, 255:red, 0; green, 0; blue, 0 }  ][line width=0.75]      (0, 0) circle [x radius= 3.35, y radius= 3.35]   ;
			\draw    (100,170) -- (100,188.6) ;
			\draw [shift={(100,188.6)}, rotate = 90] [color={rgb, 255:red, 0; green, 0; blue, 0 }  ][fill={rgb, 255:red, 0; green, 0; blue, 0 }  ][line width=0.75]      (0, 0) circle [x radius= 3.35, y radius= 3.35]   ;
			\draw [shift={(100,170)}, rotate = 90] [color={rgb, 255:red, 0; green, 0; blue, 0 }  ][fill={rgb, 255:red, 0; green, 0; blue, 0 }  ][line width=0.75]      (0, 0) circle [x radius= 3.35, y radius= 3.35]   ;
			\draw    (120,170) -- (120,188.6) ;
			\draw [shift={(120,188.6)}, rotate = 90] [color={rgb, 255:red, 0; green, 0; blue, 0 }  ][fill={rgb, 255:red, 0; green, 0; blue, 0 }  ][line width=0.75]      (0, 0) circle [x radius= 3.35, y radius= 3.35]   ;
			\draw [shift={(120,170)}, rotate = 90] [color={rgb, 255:red, 0; green, 0; blue, 0 }  ][fill={rgb, 255:red, 0; green, 0; blue, 0 }  ][line width=0.75]      (0, 0) circle [x radius= 3.35, y radius= 3.35]   ;
			\draw    (100,170) -- (120,170) ;
			\draw [shift={(120,170)}, rotate = 0] [color={rgb, 255:red, 0; green, 0; blue, 0 }  ][fill={rgb, 255:red, 0; green, 0; blue, 0 }  ][line width=0.75]      (0, 0) circle [x radius= 3.35, y radius= 3.35]   ;
			\draw [shift={(100,170)}, rotate = 0] [color={rgb, 255:red, 0; green, 0; blue, 0 }  ][fill={rgb, 255:red, 0; green, 0; blue, 0 }  ][line width=0.75]      (0, 0) circle [x radius= 3.35, y radius= 3.35]   ;
			\draw    (100,188.6) -- (120,188.6) ;
			\draw [shift={(120,188.6)}, rotate = 0] [color={rgb, 255:red, 0; green, 0; blue, 0 }  ][fill={rgb, 255:red, 0; green, 0; blue, 0 }  ][line width=0.75]      (0, 0) circle [x radius= 3.35, y radius= 3.35]   ;
			\draw [shift={(100,188.6)}, rotate = 0] [color={rgb, 255:red, 0; green, 0; blue, 0 }  ][fill={rgb, 255:red, 0; green, 0; blue, 0 }  ][line width=0.75]      (0, 0) circle [x radius= 3.35, y radius= 3.35]   ;
			\draw    (100,170) -- (120,188.6) ;
			\draw [shift={(120,188.6)}, rotate = 42.92] [color={rgb, 255:red, 0; green, 0; blue, 0 }  ][fill={rgb, 255:red, 0; green, 0; blue, 0 }  ][line width=0.75]      (0, 0) circle [x radius= 3.35, y radius= 3.35]   ;
			\draw [shift={(100,170)}, rotate = 42.92] [color={rgb, 255:red, 0; green, 0; blue, 0 }  ][fill={rgb, 255:red, 0; green, 0; blue, 0 }  ][line width=0.75]      (0, 0) circle [x radius= 3.35, y radius= 3.35]   ;
			\draw    (120,170) -- (100,188.6) ;
			\draw [shift={(100,188.6)}, rotate = 137.08] [color={rgb, 255:red, 0; green, 0; blue, 0 }  ][fill={rgb, 255:red, 0; green, 0; blue, 0 }  ][line width=0.75]      (0, 0) circle [x radius= 3.35, y radius= 3.35]   ;
			\draw [shift={(120,170)}, rotate = 137.08] [color={rgb, 255:red, 0; green, 0; blue, 0 }  ][fill={rgb, 255:red, 0; green, 0; blue, 0 }  ][line width=0.75]      (0, 0) circle [x radius= 3.35, y radius= 3.35]   ;
			\draw    (145,106) ;
			\draw [shift={(145,106)}, rotate = 0] [color={rgb, 255:red, 0; green, 0; blue, 0 }  ][fill={rgb, 255:red, 0; green, 0; blue, 0 }  ][line width=0.75]      (0, 0) circle [x radius= 3.35, y radius= 3.35]   ;
			\draw [shift={(145,106)}, rotate = 0] [color={rgb, 255:red, 0; green, 0; blue, 0 }  ][fill={rgb, 255:red, 0; green, 0; blue, 0 }  ][line width=0.75]      (0, 0) circle [x radius= 3.35, y radius= 3.35]   ;
			\draw    (145,54.6) ;
			\draw [shift={(145,54.6)}, rotate = 0] [color={rgb, 255:red, 0; green, 0; blue, 0 }  ][fill={rgb, 255:red, 0; green, 0; blue, 0 }  ][line width=0.75]      (0, 0) circle [x radius= 3.35, y radius= 3.35]   ;
			\draw [shift={(145,54.6)}, rotate = 0] [color={rgb, 255:red, 0; green, 0; blue, 0 }  ][fill={rgb, 255:red, 0; green, 0; blue, 0 }  ][line width=0.75]      (0, 0) circle [x radius= 3.35, y radius= 3.35]   ;
			\draw    (145,155.6) ;
			\draw [shift={(145,155.6)}, rotate = 0] [color={rgb, 255:red, 0; green, 0; blue, 0 }  ][fill={rgb, 255:red, 0; green, 0; blue, 0 }  ][line width=0.75]      (0, 0) circle [x radius= 3.35, y radius= 3.35]   ;
			\draw [shift={(145,155.6)}, rotate = 0] [color={rgb, 255:red, 0; green, 0; blue, 0 }  ][fill={rgb, 255:red, 0; green, 0; blue, 0 }  ][line width=0.75]      (0, 0) circle [x radius= 3.35, y radius= 3.35]   ;
			\draw    (145,205.6) ;
			\draw [shift={(145,205.6)}, rotate = 0] [color={rgb, 255:red, 0; green, 0; blue, 0 }  ][fill={rgb, 255:red, 0; green, 0; blue, 0 }  ][line width=0.75]      (0, 0) circle [x radius= 3.35, y radius= 3.35]   ;
			\draw [shift={(145,205.6)}, rotate = 0] [color={rgb, 255:red, 0; green, 0; blue, 0 }  ][fill={rgb, 255:red, 0; green, 0; blue, 0 }  ][line width=0.75]      (0, 0) circle [x radius= 3.35, y radius= 3.35]   ;
			\draw    (100,74) -- (145,54.6) ;
			\draw    (120,74) -- (145,54.6) ;
			\draw    (120,92.6) -- (145,54.6) ;
			\draw    (100,92.6) -- (145,54.6) ;
			\draw    (120,74) -- (145,106) ;
			\draw    (120,92.6) -- (145,106) ;
			\draw    (100,92.6) -- (145,106) ;
			\draw    (100,74) -- (145,106) ;
			\draw    (100,121) -- (145,106) ;
			\draw    (120,121) -- (145,106) ;
			\draw    (120,139.6) -- (145,106) ;
			\draw    (100,139.6) -- (145,106) ;
			\draw    (120,139.6) -- (145,155.6) ;
			\draw    (100,139.6) -- (145,155.6) ;
			\draw    (120,121) -- (145,155.6) ;
			\draw    (100,121) -- (145,155.6) ;
			\draw    (100,170) -- (145,155.6) ;
			\draw    (120,170) -- (145,155.6) ;
			\draw    (120,188.6) -- (145,155.6) ;
			\draw    (100,188.6) -- (145,155.6) ;
			\draw    (120,170) -- (145,205.6) ;
			\draw    (120,188.6) -- (145,205.6) ;
			\draw    (100,188.6) -- (145,205.6) ;
			\draw    (100,170) -- (145,205.6) ;
			\draw  [dash pattern={on 0.84pt off 2.51pt}]  (145,45) -- (145,33.6) ;

		\end{tikzpicture}
		
		\caption{A $4$-ray}
		\end{figure}
	
Let $G$ be a countable graph. For each vertex $v \in V(G)$, consider  $E(v)$ to be the set of edges incident on $v$. If $v$ has infinitely many incident edges, enumerate them by
$
E(v) = \{ e_i^v : i \in \mathbb{N} \}
$. Let $A$ be the set of vertices of $G$ of finite degree, and let  
$ B= V(G) \setminus A$ be the set of vertices of infinite degree.
The vertex set of  the expanded graph $\widetilde{G}$ of $G$ is defined as
$$
V(\widetilde{G}) = A \cup \bigl( B\times \mathbb{N}\bigr)\cup (\bigcup_{v\in B} \{w_n^v:n\in\mathbb{N}\}) $$

Let $u,v \in V(\widetilde{G})$ and $k\in\mathbb{N}$ be a natural number. We define the edges $E(\widetilde{G})$ of $\widetilde{G}$ as follows:

\begin{itemize}
	\item If $u,v \in A$, then $uv \in E(\widetilde{G})$ whenever $uv \in E(G)$.
	
	\item If $u\in A$ and $v \in B$, then $u(v,n) \in E(\widetilde{G})$ if $uv=e_n^v$.

	\item If $u,v \in B$ and $uv=e_n^u=e_m^v$, then 
$(u,n)(v,m) \in E(\widetilde{G})$

\item If $i,j\in\mathbb{N}$ and $2kn\leq i,j<2k(n+1)$ for a natural number $n\in\mathbb{N}$, then $w_i^vw_j^v\in E(\widetilde{G})$ and $(v,n)w_i^v, (v,n+1)w_i^v\in E(\widetilde{G})$ for $v\in B$.

\end{itemize}

For a vertex $v \in B$ of infinite degree, the induced subgraph
$G\bigl[\{(v,n),\, w_n^v : n \in \mathbb{N}\}\bigr]$ is a $2k$-ray.
Figure~\ref{2kray} illustrates the expanded graph of an infinite star
for $k = 2$.
\begin{figure}[htbp]
	\centering
	\tikzset{every picture/.style={line width=0.75pt}} 
	
	\begin{tikzpicture}[x=0.75pt,y=0.75pt,yscale=-1,xscale=1]
		
		\draw    (232,123) -- (232,141.6) ;
		\draw [shift={(232,141.6)}, rotate = 90] [color={rgb, 255:red, 0; green, 0; blue, 0 }  ][fill={rgb, 255:red, 0; green, 0; blue, 0 }  ][line width=0.75]      (0, 0) circle [x radius= 3.35, y radius= 3.35]   ;
		\draw [shift={(232,123)}, rotate = 90] [color={rgb, 255:red, 0; green, 0; blue, 0 }  ][fill={rgb, 255:red, 0; green, 0; blue, 0 }  ][line width=0.75]      (0, 0) circle [x radius= 3.35, y radius= 3.35]   ;
		\draw    (252,123) -- (252,141.6) ;
		\draw [shift={(252,141.6)}, rotate = 90] [color={rgb, 255:red, 0; green, 0; blue, 0 }  ][fill={rgb, 255:red, 0; green, 0; blue, 0 }  ][line width=0.75]      (0, 0) circle [x radius= 3.35, y radius= 3.35]   ;
		\draw [shift={(252,123)}, rotate = 90] [color={rgb, 255:red, 0; green, 0; blue, 0 }  ][fill={rgb, 255:red, 0; green, 0; blue, 0 }  ][line width=0.75]      (0, 0) circle [x radius= 3.35, y radius= 3.35]   ;
		\draw    (232,123) -- (252,123) ;
		\draw [shift={(252,123)}, rotate = 0] [color={rgb, 255:red, 0; green, 0; blue, 0 }  ][fill={rgb, 255:red, 0; green, 0; blue, 0 }  ][line width=0.75]      (0, 0) circle [x radius= 3.35, y radius= 3.35]   ;
		\draw [shift={(232,123)}, rotate = 0] [color={rgb, 255:red, 0; green, 0; blue, 0 }  ][fill={rgb, 255:red, 0; green, 0; blue, 0 }  ][line width=0.75]      (0, 0) circle [x radius= 3.35, y radius= 3.35]   ;
		\draw    (232,141.6) -- (252,141.6) ;
		\draw [shift={(252,141.6)}, rotate = 0] [color={rgb, 255:red, 0; green, 0; blue, 0 }  ][fill={rgb, 255:red, 0; green, 0; blue, 0 }  ][line width=0.75]      (0, 0) circle [x radius= 3.35, y radius= 3.35]   ;
		\draw [shift={(232,141.6)}, rotate = 0] [color={rgb, 255:red, 0; green, 0; blue, 0 }  ][fill={rgb, 255:red, 0; green, 0; blue, 0 }  ][line width=0.75]      (0, 0) circle [x radius= 3.35, y radius= 3.35]   ;
		\draw    (232,123) -- (252,141.6) ;
		\draw [shift={(252,141.6)}, rotate = 42.92] [color={rgb, 255:red, 0; green, 0; blue, 0 }  ][fill={rgb, 255:red, 0; green, 0; blue, 0 }  ][line width=0.75]      (0, 0) circle [x radius= 3.35, y radius= 3.35]   ;
		\draw [shift={(232,123)}, rotate = 42.92] [color={rgb, 255:red, 0; green, 0; blue, 0 }  ][fill={rgb, 255:red, 0; green, 0; blue, 0 }  ][line width=0.75]      (0, 0) circle [x radius= 3.35, y radius= 3.35]   ;
		\draw    (252,123) -- (232,141.6) ;
		\draw [shift={(232,141.6)}, rotate = 137.08] [color={rgb, 255:red, 0; green, 0; blue, 0 }  ][fill={rgb, 255:red, 0; green, 0; blue, 0 }  ][line width=0.75]      (0, 0) circle [x radius= 3.35, y radius= 3.35]   ;
		\draw [shift={(252,123)}, rotate = 137.08] [color={rgb, 255:red, 0; green, 0; blue, 0 }  ][fill={rgb, 255:red, 0; green, 0; blue, 0 }  ][line width=0.75]      (0, 0) circle [x radius= 3.35, y radius= 3.35]   ;
		\draw    (232,76) -- (232,94.6) ;
		\draw [shift={(232,94.6)}, rotate = 90] [color={rgb, 255:red, 0; green, 0; blue, 0 }  ][fill={rgb, 255:red, 0; green, 0; blue, 0 }  ][line width=0.75]      (0, 0) circle [x radius= 3.35, y radius= 3.35]   ;
		\draw [shift={(232,76)}, rotate = 90] [color={rgb, 255:red, 0; green, 0; blue, 0 }  ][fill={rgb, 255:red, 0; green, 0; blue, 0 }  ][line width=0.75]      (0, 0) circle [x radius= 3.35, y radius= 3.35]   ;
		\draw    (252,76) -- (252,94.6) ;
		\draw [shift={(252,94.6)}, rotate = 90] [color={rgb, 255:red, 0; green, 0; blue, 0 }  ][fill={rgb, 255:red, 0; green, 0; blue, 0 }  ][line width=0.75]      (0, 0) circle [x radius= 3.35, y radius= 3.35]   ;
		\draw [shift={(252,76)}, rotate = 90] [color={rgb, 255:red, 0; green, 0; blue, 0 }  ][fill={rgb, 255:red, 0; green, 0; blue, 0 }  ][line width=0.75]      (0, 0) circle [x radius= 3.35, y radius= 3.35]   ;
		\draw    (232,76) -- (252,76) ;
		\draw [shift={(252,76)}, rotate = 0] [color={rgb, 255:red, 0; green, 0; blue, 0 }  ][fill={rgb, 255:red, 0; green, 0; blue, 0 }  ][line width=0.75]      (0, 0) circle [x radius= 3.35, y radius= 3.35]   ;
		\draw [shift={(232,76)}, rotate = 0] [color={rgb, 255:red, 0; green, 0; blue, 0 }  ][fill={rgb, 255:red, 0; green, 0; blue, 0 }  ][line width=0.75]      (0, 0) circle [x radius= 3.35, y radius= 3.35]   ;
		\draw    (232,94.6) -- (252,94.6) ;
		\draw [shift={(252,94.6)}, rotate = 0] [color={rgb, 255:red, 0; green, 0; blue, 0 }  ][fill={rgb, 255:red, 0; green, 0; blue, 0 }  ][line width=0.75]      (0, 0) circle [x radius= 3.35, y radius= 3.35]   ;
		\draw [shift={(232,94.6)}, rotate = 0] [color={rgb, 255:red, 0; green, 0; blue, 0 }  ][fill={rgb, 255:red, 0; green, 0; blue, 0 }  ][line width=0.75]      (0, 0) circle [x radius= 3.35, y radius= 3.35]   ;
		\draw    (232,76) -- (252,94.6) ;
		\draw [shift={(252,94.6)}, rotate = 42.92] [color={rgb, 255:red, 0; green, 0; blue, 0 }  ][fill={rgb, 255:red, 0; green, 0; blue, 0 }  ][line width=0.75]      (0, 0) circle [x radius= 3.35, y radius= 3.35]   ;
		\draw [shift={(232,76)}, rotate = 42.92] [color={rgb, 255:red, 0; green, 0; blue, 0 }  ][fill={rgb, 255:red, 0; green, 0; blue, 0 }  ][line width=0.75]      (0, 0) circle [x radius= 3.35, y radius= 3.35]   ;
		\draw    (252,76) -- (232,94.6) ;
		\draw [shift={(232,94.6)}, rotate = 137.08] [color={rgb, 255:red, 0; green, 0; blue, 0 }  ][fill={rgb, 255:red, 0; green, 0; blue, 0 }  ][line width=0.75]      (0, 0) circle [x radius= 3.35, y radius= 3.35]   ;
		\draw [shift={(252,76)}, rotate = 137.08] [color={rgb, 255:red, 0; green, 0; blue, 0 }  ][fill={rgb, 255:red, 0; green, 0; blue, 0 }  ][line width=0.75]      (0, 0) circle [x radius= 3.35, y radius= 3.35]   ;
		\draw    (232,172) -- (232,190.6) ;
		\draw [shift={(232,190.6)}, rotate = 90] [color={rgb, 255:red, 0; green, 0; blue, 0 }  ][fill={rgb, 255:red, 0; green, 0; blue, 0 }  ][line width=0.75]      (0, 0) circle [x radius= 3.35, y radius= 3.35]   ;
		\draw [shift={(232,172)}, rotate = 90] [color={rgb, 255:red, 0; green, 0; blue, 0 }  ][fill={rgb, 255:red, 0; green, 0; blue, 0 }  ][line width=0.75]      (0, 0) circle [x radius= 3.35, y radius= 3.35]   ;
		\draw    (252,172) -- (252,190.6) ;
		\draw [shift={(252,190.6)}, rotate = 90] [color={rgb, 255:red, 0; green, 0; blue, 0 }  ][fill={rgb, 255:red, 0; green, 0; blue, 0 }  ][line width=0.75]      (0, 0) circle [x radius= 3.35, y radius= 3.35]   ;
		\draw [shift={(252,172)}, rotate = 90] [color={rgb, 255:red, 0; green, 0; blue, 0 }  ][fill={rgb, 255:red, 0; green, 0; blue, 0 }  ][line width=0.75]      (0, 0) circle [x radius= 3.35, y radius= 3.35]   ;
		\draw    (232,172) -- (252,172) ;
		\draw [shift={(252,172)}, rotate = 0] [color={rgb, 255:red, 0; green, 0; blue, 0 }  ][fill={rgb, 255:red, 0; green, 0; blue, 0 }  ][line width=0.75]      (0, 0) circle [x radius= 3.35, y radius= 3.35]   ;
		\draw [shift={(232,172)}, rotate = 0] [color={rgb, 255:red, 0; green, 0; blue, 0 }  ][fill={rgb, 255:red, 0; green, 0; blue, 0 }  ][line width=0.75]      (0, 0) circle [x radius= 3.35, y radius= 3.35]   ;
		\draw    (232,190.6) -- (252,190.6) ;
		\draw [shift={(252,190.6)}, rotate = 0] [color={rgb, 255:red, 0; green, 0; blue, 0 }  ][fill={rgb, 255:red, 0; green, 0; blue, 0 }  ][line width=0.75]      (0, 0) circle [x radius= 3.35, y radius= 3.35]   ;
		\draw [shift={(232,190.6)}, rotate = 0] [color={rgb, 255:red, 0; green, 0; blue, 0 }  ][fill={rgb, 255:red, 0; green, 0; blue, 0 }  ][line width=0.75]      (0, 0) circle [x radius= 3.35, y radius= 3.35]   ;
		\draw    (232,172) -- (252,190.6) ;
		\draw [shift={(252,190.6)}, rotate = 42.92] [color={rgb, 255:red, 0; green, 0; blue, 0 }  ][fill={rgb, 255:red, 0; green, 0; blue, 0 }  ][line width=0.75]      (0, 0) circle [x radius= 3.35, y radius= 3.35]   ;
		\draw [shift={(232,172)}, rotate = 42.92] [color={rgb, 255:red, 0; green, 0; blue, 0 }  ][fill={rgb, 255:red, 0; green, 0; blue, 0 }  ][line width=0.75]      (0, 0) circle [x radius= 3.35, y radius= 3.35]   ;
		\draw    (252,172) -- (232,190.6) ;
		\draw [shift={(232,190.6)}, rotate = 137.08] [color={rgb, 255:red, 0; green, 0; blue, 0 }  ][fill={rgb, 255:red, 0; green, 0; blue, 0 }  ][line width=0.75]      (0, 0) circle [x radius= 3.35, y radius= 3.35]   ;
		\draw [shift={(252,172)}, rotate = 137.08] [color={rgb, 255:red, 0; green, 0; blue, 0 }  ][fill={rgb, 255:red, 0; green, 0; blue, 0 }  ][line width=0.75]      (0, 0) circle [x radius= 3.35, y radius= 3.35]   ;
		\draw    (277,108) ;
		\draw [shift={(277,108)}, rotate = 0] [color={rgb, 255:red, 0; green, 0; blue, 0 }  ][fill={rgb, 255:red, 0; green, 0; blue, 0 }  ][line width=0.75]      (0, 0) circle [x radius= 3.35, y radius= 3.35]   ;
		\draw [shift={(277,108)}, rotate = 0] [color={rgb, 255:red, 0; green, 0; blue, 0 }  ][fill={rgb, 255:red, 0; green, 0; blue, 0 }  ][line width=0.75]      (0, 0) circle [x radius= 3.35, y radius= 3.35]   ;
		\draw    (277,56.6) ;
		\draw [shift={(277,56.6)}, rotate = 0] [color={rgb, 255:red, 0; green, 0; blue, 0 }  ][fill={rgb, 255:red, 0; green, 0; blue, 0 }  ][line width=0.75]      (0, 0) circle [x radius= 3.35, y radius= 3.35]   ;
		\draw [shift={(277,56.6)}, rotate = 0] [color={rgb, 255:red, 0; green, 0; blue, 0 }  ][fill={rgb, 255:red, 0; green, 0; blue, 0 }  ][line width=0.75]      (0, 0) circle [x radius= 3.35, y radius= 3.35]   ;
		\draw    (277,157.6) ;
		\draw [shift={(277,157.6)}, rotate = 0] [color={rgb, 255:red, 0; green, 0; blue, 0 }  ][fill={rgb, 255:red, 0; green, 0; blue, 0 }  ][line width=0.75]      (0, 0) circle [x radius= 3.35, y radius= 3.35]   ;
		\draw [shift={(277,157.6)}, rotate = 0] [color={rgb, 255:red, 0; green, 0; blue, 0 }  ][fill={rgb, 255:red, 0; green, 0; blue, 0 }  ][line width=0.75]      (0, 0) circle [x radius= 3.35, y radius= 3.35]   ;
		\draw    (277,207.6) ;
		\draw [shift={(277,207.6)}, rotate = 0] [color={rgb, 255:red, 0; green, 0; blue, 0 }  ][fill={rgb, 255:red, 0; green, 0; blue, 0 }  ][line width=0.75]      (0, 0) circle [x radius= 3.35, y radius= 3.35]   ;
		\draw [shift={(277,207.6)}, rotate = 0] [color={rgb, 255:red, 0; green, 0; blue, 0 }  ][fill={rgb, 255:red, 0; green, 0; blue, 0 }  ][line width=0.75]      (0, 0) circle [x radius= 3.35, y radius= 3.35]   ;
		\draw    (232,76) -- (277,56.6) ;
		\draw    (252,76) -- (277,56.6) ;
		\draw    (252,94.6) -- (277,56.6) ;
		\draw    (232,94.6) -- (277,56.6) ;
		\draw    (252,76) -- (277,108) ;
		\draw    (252,94.6) -- (277,108) ;
		\draw    (232,94.6) -- (277,108) ;
		\draw    (232,76) -- (277,108) ;
		\draw    (232,123) -- (277,108) ;
		\draw    (252,123) -- (277,108) ;
		\draw    (252,141.6) -- (277,108) ;
		\draw    (232,141.6) -- (277,108) ;
		\draw    (252,141.6) -- (277,157.6) ;
		\draw    (232,141.6) -- (277,157.6) ;
		\draw    (252,123) -- (277,157.6) ;
		\draw    (232,123) -- (277,157.6) ;
		\draw    (232,172) -- (277,157.6) ;
		\draw    (252,172) -- (277,157.6) ;
		\draw    (252,190.6) -- (277,157.6) ;
		\draw    (232,190.6) -- (277,157.6) ;
		\draw    (252,172) -- (277,207.6) ;
		\draw    (252,190.6) -- (277,207.6) ;
		\draw    (232,190.6) -- (277,207.6) ;
		\draw    (232,172) -- (277,207.6) ;
		\draw  [dash pattern={on 0.84pt off 2.51pt}]  (298,46) -- (298,39.6) -- (298,34.6) ;
		\draw    (100,122) -- (127,107.6) ;
		\draw [shift={(127,107.6)}, rotate = 331.93] [color={rgb, 255:red, 0; green, 0; blue, 0 }  ][fill={rgb, 255:red, 0; green, 0; blue, 0 }  ][line width=0.75]      (0, 0) circle [x radius= 3.35, y radius= 3.35]   ;
		\draw [shift={(100,122)}, rotate = 331.93] [color={rgb, 255:red, 0; green, 0; blue, 0 }  ][fill={rgb, 255:red, 0; green, 0; blue, 0 }  ][line width=0.75]      (0, 0) circle [x radius= 3.35, y radius= 3.35]   ;
		\draw    (100,122) -- (131,124.6) ;
		\draw [shift={(131,124.6)}, rotate = 4.79] [color={rgb, 255:red, 0; green, 0; blue, 0 }  ][fill={rgb, 255:red, 0; green, 0; blue, 0 }  ][line width=0.75]      (0, 0) circle [x radius= 3.35, y radius= 3.35]   ;
		\draw [shift={(100,122)}, rotate = 4.79] [color={rgb, 255:red, 0; green, 0; blue, 0 }  ][fill={rgb, 255:red, 0; green, 0; blue, 0 }  ][line width=0.75]      (0, 0) circle [x radius= 3.35, y radius= 3.35]   ;
		\draw    (100,122) -- (124,141.6) ;
		\draw [shift={(124,141.6)}, rotate = 39.24] [color={rgb, 255:red, 0; green, 0; blue, 0 }  ][fill={rgb, 255:red, 0; green, 0; blue, 0 }  ][line width=0.75]      (0, 0) circle [x radius= 3.35, y radius= 3.35]   ;
		\draw [shift={(100,122)}, rotate = 39.24] [color={rgb, 255:red, 0; green, 0; blue, 0 }  ][fill={rgb, 255:red, 0; green, 0; blue, 0 }  ][line width=0.75]      (0, 0) circle [x radius= 3.35, y radius= 3.35]   ;
		\draw    (100,122) -- (109,95.6) ;
		\draw [shift={(109,95.6)}, rotate = 288.82] [color={rgb, 255:red, 0; green, 0; blue, 0 }  ][fill={rgb, 255:red, 0; green, 0; blue, 0 }  ][line width=0.75]      (0, 0) circle [x radius= 3.35, y radius= 3.35]   ;
		\draw [shift={(100,122)}, rotate = 288.82] [color={rgb, 255:red, 0; green, 0; blue, 0 }  ][fill={rgb, 255:red, 0; green, 0; blue, 0 }  ][line width=0.75]      (0, 0) circle [x radius= 3.35, y radius= 3.35]   ;
		\draw  [dash pattern={on 0.84pt off 2.51pt}]  (105,138) -- (97,144.6) ;
		\draw    (159.99,118.5) -- (193.99,118.18)(160.01,121.5) -- (194.01,121.18) ;
		\draw [shift={(202,119.6)}, rotate = 179.45] [color={rgb, 255:red, 0; green, 0; blue, 0 }  ][line width=0.75]    (10.93,-3.29) .. controls (6.95,-1.4) and (3.31,-0.3) .. (0,0) .. controls (3.31,0.3) and (6.95,1.4) .. (10.93,3.29)   ;
		\draw    (277,56.6) -- (320,56.6) ;
		\draw [shift={(320,56.6)}, rotate = 0] [color={rgb, 255:red, 0; green, 0; blue, 0 }  ][fill={rgb, 255:red, 0; green, 0; blue, 0 }  ][line width=0.75]      (0, 0) circle [x radius= 3.35, y radius= 3.35]   ;
		\draw [shift={(277,56.6)}, rotate = 0] [color={rgb, 255:red, 0; green, 0; blue, 0 }  ][fill={rgb, 255:red, 0; green, 0; blue, 0 }  ][line width=0.75]      (0, 0) circle [x radius= 3.35, y radius= 3.35]   ;
		\draw    (277,108) -- (320,108) ;
		\draw [shift={(320,108)}, rotate = 0] [color={rgb, 255:red, 0; green, 0; blue, 0 }  ][fill={rgb, 255:red, 0; green, 0; blue, 0 }  ][line width=0.75]      (0, 0) circle [x radius= 3.35, y radius= 3.35]   ;
		\draw [shift={(277,108)}, rotate = 0] [color={rgb, 255:red, 0; green, 0; blue, 0 }  ][fill={rgb, 255:red, 0; green, 0; blue, 0 }  ][line width=0.75]      (0, 0) circle [x radius= 3.35, y radius= 3.35]   ;
		\draw    (277,157.6) -- (320,157.6) ;
		\draw [shift={(320,157.6)}, rotate = 0] [color={rgb, 255:red, 0; green, 0; blue, 0 }  ][fill={rgb, 255:red, 0; green, 0; blue, 0 }  ][line width=0.75]      (0, 0) circle [x radius= 3.35, y radius= 3.35]   ;
		\draw [shift={(277,157.6)}, rotate = 0] [color={rgb, 255:red, 0; green, 0; blue, 0 }  ][fill={rgb, 255:red, 0; green, 0; blue, 0 }  ][line width=0.75]      (0, 0) circle [x radius= 3.35, y radius= 3.35]   ;
		\draw    (277,207.6) -- (320,207.6) ;
		\draw [shift={(320,207.6)}, rotate = 0] [color={rgb, 255:red, 0; green, 0; blue, 0 }  ][fill={rgb, 255:red, 0; green, 0; blue, 0 }  ][line width=0.75]      (0, 0) circle [x radius= 3.35, y radius= 3.35]   ;
		\draw [shift={(277,207.6)}, rotate = 0] [color={rgb, 255:red, 0; green, 0; blue, 0 }  ][fill={rgb, 255:red, 0; green, 0; blue, 0 }  ][line width=0.75]      (0, 0) circle [x radius= 3.35, y radius= 3.35]   ;

	\end{tikzpicture}
	
	\caption{Expanding a vertex of infinite degree into a $4$-ray.}
	\label{2kray}
\end{figure}

Consider the function $\rho : E(G) \to E(\widetilde{G})$ defined by
\[
\rho(uv) =
\begin{cases}
	uv,       & \text{if } u, v \in A, \\[0.2cm]
	u(v,n),   & \text{if } u \in A,\ v \in B \text{ and } uv = e_n^v, \\[0.2cm]
	(u,n)(v,m), & \text{if } u, v \in B \text{ and } uv = e_n^u = e_m^v.
\end{cases}
\]
We also define a vertex function
$\sigma :A\cup (B\times \mathbb{N})
\to V(G)$
by setting $\sigma((v,n)) = v$ for $v \in B$, and $\sigma(v) = v$ for $v \in A$.
Given a subgraph $H \subseteq \widetilde{G}$, we denote by $\rho^{-1}[H]$
the subgraph of $G$ whose vertex set consists of all $\sigma(v) \in V(G)$
such that $v \in A \cup (B\times \mathbb{N})$ is an endpoint of some edge in~$E(H)$,
and whose edge set is~$\rho^{-1}[E(H)]$.

We establish some properties of the expanded graph~$\widetilde{G}$.
	\begin{claim}
		\label{c1}
		$\widetilde{G}$ is locally finite.
	\end{claim}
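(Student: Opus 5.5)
The plan is to check the degree of every vertex of $\widetilde{G}$ directly, splitting according to the three kinds of vertices in $V(\widetilde{G}) = A \cup (B\times\mathbb{N}) \cup \bigcup_{v\in B}\{w_n^v : n\in\mathbb{N}\}$. In each case I will list the edge types from the definition of $E(\widetilde{G})$ that can be incident to the vertex and bound how many edges each type contributes.

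\emph{A vertex $u\in A$.} Its neighbours can only come from two edge types: edges $uv$ with $v\in A$ and $uv\in E(G)$, and edges $u(v,n)$ with $v\in B$ and $uv=e_n^v$. Each such edge of $\widetilde{G}$ is $\rho(e)$ for an edge $e\in E(G)$ incident to $u$. Moreover $\rho$ is injective, since distinct edges of $G$ receive distinct images by the enumerations $E(v)=\{e_i^v\}$. Hence $\deg_{\widetilde{G}}(u)\le \deg_G(u)<\infty$.

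\emph{A vertex $(v,n)$ with $v\in B$.} The edges of $G$ that produce edges at $(v,n)$ are those equal to $e_n^v$, and there is exactly one such edge, $e_n^v$. It contributes one edge: either $u(v,n)$ with $u\in A$, or $(v,n)(u,m)$ with $u\in B$. The remaining edges at $(v,n)$ come from the $2k$-ray gadget. These are the edges $(v,n)w_i^v$ with $2kn\le i<2k(n+1)$ and the edges $(v,n)w_i^v$ with $2k(n-1)\le i<2kn$ when $n\ge 1$. Here I read ``$(v,n)w_i^v$ and $(v,n+1)w_i^v$'' as joining block $n$ of the $w$'s to $(v,n)$ and $(v,n+1)$. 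So $\deg_{\widetilde{G}}((v,n))\le 1+4k$.

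\emph{A vertex $w_i^v$.} Let $n=\lfloor i/2k\rfloor$. Then $w_i^v$ is adjacent to the other $2k-1$ vertices $w_j^v$ of its block, and to $(v,n)$ and $(v,n+1)$. No other rule mentions $w_i^v$, so $\deg_{\widetilde{G}}(w_i^v)=2k+1$.

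Since every vertex has finite degree, $\widetilde{G}$ is locally finite. There is no real obstacle here. The only point that needs care is the bookkeeping: each $(v,n)$ receives exactly one edge coming from $G$, namely $e_n^v$, because the enumeration of $E(v)$ is a bijection with $\mathbb{N}$. That is exactly what converts an infinite-degree vertex into infinitely many vertices of bounded degree.
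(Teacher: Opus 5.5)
Your proof is correct and follows essentially the same route as the paper. Both split into the three kinds of vertices ($w_i^v$, vertices of $A$, and vertices $(v,n)$) and obtain the same degree counts: $2k+1$, $\deg_G(u)$, and at most $4k+1$. The paper records the last count as exactly $2k+1$ for $(v,0)$ and $2k+2k+1$ otherwise, which matches your reading of the gadget edges.
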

	\begin{proof}
Let $v\in V(\widetilde{G})$. If $v$ is a vertex of a copy of $K_{2k}$ in a $2k$-ray $r_v$ of $\widetilde{G}$, then its degree is $2k+1$. If $v$ is a vertex that had finite degree in $G$, then its degree in $\widetilde{G}$ is the same as in $G$. Finally, suppose that $v$ is a vertex of a $2k$-ray $r_v$ of $\widetilde{G}$ that does not belong to any copy of $K_{2k}$. In this case, its degree is $2k+2k+1$ or $2k+1$.
	\end{proof}
	
	\begin{claim}
		\label{c2}
		If $G$ is connected, then $\widetilde{G}$ is connected.
	\end{claim}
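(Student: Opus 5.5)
The plan is to show that every vertex of $\widetilde{G}$ can be joined by a path to a single fixed ``anchor'' vertex. The vertices of $\widetilde{G}$ fall into three types: vertices of $A$; vertices $(v,n)$ with $v\in B$; and the clique vertices $w_i^v$ with $v\in B$. Fix a vertex $x_0\in V(G)$ and define its representative $\tilde x_0$ to be $x_0$ if $x_0\in A$, and $(x_0,0)$ if $x_0\in B$. It then suffices to show that every vertex of $\widetilde{G}$ is joined by a path to some representative of a vertex of $G$, and that the representatives of any two vertices of $G$ are joined by a path.

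\textbf{Step 1: each $2k$-ray is connected.} Fix $v\in B$. By the last item in the definition of $E(\widetilde{G})$, for each $n$ every $w_i^v$ with $2kn\le i<2k(n+1)$ is adjacent to both $(v,n)$ and $(v,n+1)$. Hence $(v,n)w_{2kn}^v(v,n+1)$ is a path. Concatenating these paths joins $(v,0)$ to every $(v,n)$ and to every $w_i^v$. Thus the induced subgraph on $\{(v,n),w_n^v:n\in\mathbb{N}\}$ is connected, and every vertex of $\widetilde{G}$ that is not in $A$ is joined to $(v,0)$ for some $v\in B$.

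\textbf{Step 2: lifting edges of $G$.} Let $xy\in E(G)$ and let $\rho(xy)$ be its image. I claim that the endpoints of $\rho(xy)$ lie in the ``blocks'' of $x$ and $y$. Here the block of a vertex $z\in A$ is $\{z\}$, and the block of a vertex $z\in B$ is its $2k$-ray. This follows by checking the three cases in the definition of $\rho$; for instance, if $x,y\in B$ with $xy=e_n^x=e_m^y$, then $\rho(xy)=(x,n)(y,m)$. Each block is connected by Step 1, so the representative of $x$ is joined to the representative of $y$ by the walk
\[
\text{representative of } x \;\longrightarrow\; \text{endpoint of } \rho(xy) \text{ in the block of } x \;\xrightarrow{\ \rho(xy)\ }\; \text{endpoint in the block of } y \;\longrightarrow\; \text{representative of } y .
\]

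\textbf{Step 3: conclusion.} Let $p,q\in V(\widetilde{G})$. By Step 1, $p$ is joined to the representative of some $x\in V(G)$, namely $x=p$ if $p\in A$ and $x=v$ if $p$ lies in the $2k$-ray of $v$. Similarly, $q$ is joined to the representative of some $y\in V(G)$. Since $G$ is connected, there is a finite path $x=x_0x_1\cdots x_m=y$ in $G$. Applying Step 2 to each edge $x_jx_{j+1}$ and concatenating the resulting walks gives a $p$–$q$ walk in $\widetilde{G}$, and hence a $p$–$q$ path.

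There is no real obstacle in this argument. The only point needing care is the bookkeeping in Step 2, namely checking that the edges of the form $e_n^v$ really land in $v$'s $2k$-ray. I expect this to be immediate from the definitions.
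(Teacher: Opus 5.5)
Your proof is correct and takes essentially the same approach as the paper: you lift a path of $G$ edge by edge via $\rho$, and you bridge consecutive lifted edges inside the $2k$-ray of each shared infinite-degree vertex. Your Step~1 (connectivity of each $2k$-ray, including the clique vertices $w_i^v$) and your use of walks instead of paths make explicit two points that the paper leaves implicit, namely in its ``it suffices'' reduction and in its assertion that the concatenation is a path.
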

	
	\begin{proof}
		
		It suffices to show that from a path $P$ between vertices $u$ and $v$ of $G$ we obtain a path $\widetilde{P}$ between some vertices in the sets  $\sigma^{-1}(\{u\})$ and $\sigma^{-1}(\{v\})$ of  $ V(\widetilde{G})$.

	Consider a path $v_0v_1\cdots v_{l+1}$ in $G$.  For each $i=0,\ldots,l$, we define a path $P_i$ in $\widetilde{G}$ as follows: $P_i$ is $(v_i,n)v_{i+1}$ if the next vertex $v_{i+1}$ has finite degree in $G$, $v_i$ has infinite degree in $G$ and $v_iv_{i+1}=e_n^{v_i}$ or $v_iv_{i+1}$ if $v_i$ and $v_{i+1}$ have finite degree; otherwise, $P_i$ is $e_i\in E(G)$ concatenated with a path in $r_{v_i}$ between $(v_i,r)$ and $(v_i,s)$, where $e_r^{\,v_{i}}=e_i$ and $e_s^{\,v_{i}}=e_{i+1}$. Notice that the concatenation of $P_0,\ldots,P_l$ is a path in $\widetilde{G}$. This construction is illustrated in Figure \ref{conca}.
	\end{proof}
	
\begin{figure}[htbp]
	\centering
	
	\tikzset{every picture/.style={line width=0.75pt}}
	

	
	\caption{Concatenation of paths in \(\widetilde{G}\) described in Claim~\ref{c2}.}
	\label{conca}
\end{figure}
	\begin{claim}
		\label{c3}
		Let $S\subseteq V(G)$ be a $2k$-edge-connected subset and $\widetilde{S}=\sigma^{-1}[S]$. Then $\widetilde{S}$ is a $2k$-edge-connected subset of $V(\widetilde{G})$.
	\end{claim}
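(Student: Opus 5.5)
The plan is to lift edge-disjoint paths from $G$ to $\widetilde{G}$ using the construction in the proof of Claim~\ref{c2}, and then absorb the extra vertices of $\widetilde{S}$ inside each $2k$-ray. Note that $\widetilde{S}$ contains, for each $v\in S\cap B$, all the vertices $(v,n)$, $n\in\mathbb{N}$. So there are two kinds of pairs to handle: pairs lying in a single $2k$-ray $r_v$, and pairs coming from different vertices of $S$.

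\emph{Step 1 (inside a $2k$-ray).} We show that any two vertices $(v,n),(v,m)$ of $r_v$ are joined by $2k$ edge-disjoint paths inside $r_v$. Between consecutive vertices $(v,j)$ and $(v,j+1)$ there are $2k$ paths $(v,j)\,w_i^v\,(v,j+1)$, for $2kj\le i<2k(j+1)$, and these use pairwise distinct edges. Concatenating the $t$-th such path over $j=n,\dots,m-1$, for $t=1,\dots,2k$, gives $2k$ edge-disjoint $(v,n)$–$(v,m)$ paths. More generally, any $2k$ walks that enter and leave $r_v$ at prescribed vertices can be routed through distinct "lanes" $t$, and stay edge-disjoint within $r_v$.

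\emph{Step 2 (different vertices of $S$).} Let $x\in\sigma^{-1}(u)$ and $y\in\sigma^{-1}(v)$ with $u\neq v\in S$. Take $2k$ edge-disjoint $u$–$v$ paths $P^1,\dots,P^{2k}$ in $G$. We lift each $P^t$ as in Claim~\ref{c2}: the edge $\rho(e)$ is used for every edge $e$ of $P^t$, and at each interior vertex $z\in B$ we traverse $r_z$ between the two attachment vertices. Since the $P^t$ are edge-disjoint, the $\rho$-images of their edges are distinct. At the endpoints, if $u\in B$ we extend the lifted path inside $r_u$ from $x$ to the attachment vertex of the first edge of $P^t$; similarly at $v$.

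\emph{Step 3 (disjointness inside each ray).} The main obstacle is edge-disjointness inside the rays. Several $P^t$ may pass through the same $z\in B$, and the endpoint extensions at $u$ and $v$ all lie in the same rays. At most $2k$ lifted paths use any given $r_z$. Assign the $t$-th path to lane $t$ of Step~1; the lane-$t$ segments are edge-disjoint from each other for different $t$. It remains to check that a path visiting $r_z$ uses it only once. This holds because a path in $G$ visits $z$ at most once, so the lifted object is a walk that is edge-disjoint from the others. Each such walk contains a path between its endpoints, which gives $2k$ edge-disjoint $x$–$y$ paths in $\widetilde{G}$.

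Vertices in $A\cap S$ require no extension, so this completes the case analysis.
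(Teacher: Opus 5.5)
Your proof is correct, but it takes a different route from the paper.

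\textbf{The paper's route.} The paper argues via cuts. Given $\widetilde{X}\subseteq E(\widetilde{G})$ with $|\widetilde{X}|<2k$, the set $\rho^{-1}[\widetilde{X}]$ has fewer than $2k$ edges. So any two vertices of $S$ are joined by a path in $G-\rho^{-1}[\widetilde{X}]$, and that single path is lifted to $\widetilde{G}-\widetilde{X}$ exactly as in Claim~\ref{c2}. This is shorter, since only one path is lifted and Claim~\ref{c2} is reused verbatim. It leaves two things implicit, however:
\begin{enumerate}
\item The passage from ``no set of fewer than $2k$ edges separates two vertices of $\widetilde{S}$'' back to $2k$ edge-disjoint paths. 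This is the edge version of Menger's theorem, needed because $2k$-edge-connectivity is defined via paths.
\item The fact that a $2k$-ray minus fewer than $2k$ edges still connects all its vertices $(v,n)$. This follows because consecutive vertices $(v,n),(v,n+1)$ are joined by $2k$ edge-disjoint paths of length two. It is needed both to route the lifted path through the rays $r_z$ (since $\widetilde{X}$ may contain ray edges, which $\rho^{-1}$ does not see) and to join two vertices of the same $r_v$.
\end{enumerate}

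\textbf{Your route.} You lift $2k$ edge-disjoint paths directly and give each one its own lane through every $2k$-ray. This avoids Menger's theorem entirely. Step~1 handles pairs inside a single ray explicitly. It also makes clear why exactly $2k$ parallel vertices $w_i^v$ per block are needed. The bookkeeping is a little longer, but everything is constructive.

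One point deserves an explicit sentence. Every lane edge is incident with some $w_i^z$, while no $\rho$-edge is. So the ray segments can never share an edge with the lifted edges of the $P^t$.
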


	\begin{proof}

		Let $\widetilde{X} \subseteq E(\widetilde{G})$ be a set of fewer than $2k$ edges.
		Observe that $\rho^{-1}[\widetilde{X}]$ is an edge set of size less than $2k$.
		Since $S$ is $2k$-edge-connected, there exists a path between any two vertices
		of $S$ in $G - \rho^{-1}[\widetilde{X}]$.
		Therefore, by the proof of Claim~\ref{c2}, there exists a path between any two vertices
		of $\widetilde{S}$ in $\widetilde{G} - \widetilde{X}$.
		This proves that $\widetilde{S}$ is $2k$-edge-connected.
	\end{proof}

In the following results, we restrict our attention to finitely edge-separable graphs in order to avoid pathological phenomena that may arise when pairs of vertices are joined by infinitely many edge-disjoint paths (see Example~\ref{ex1}). Before proving Theorem~\ref{teoenum}, we establish a lemma concerning topological paths.

	\begin{lemma}[{\cite[Lemma 5.5]{guilherme}}]
		\label{lemapath}
		For an oriented graph $\vec{G}$ such that every two vertices $u$ and $v$ are finitely edge-connected, the following are equivalent:
		\begin{enumerate}
			\item There is an oriented topological path from $u$ to $v$;
			
			\item There is a continuous function
			\[
			\Phi : [0,1] \to \textsc{Etop}'(G)
			\]
			that respects edge orientation, such that $\Phi(0)=u$ and $\Phi(1)=v$; if, for two distinct points $x,y \in [0,1]$, $\Phi(x)=\Phi(y)$, then their image is either an end or a vertex that edge-dominates an end.
		\end{enumerate}
	\end{lemma}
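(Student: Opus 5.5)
We prove the two implications separately. The graph is oriented and every pair of vertices is finitely edge-connected.

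\emph{(1)$\Rightarrow$(2).} Let $P$ be an oriented topological path from $u$ to $v$, and let $f:[0,1]\to P$ be a homeomorphism with $f(0)=u$ and $f(1)=v$. Set $\Phi=\iota\circ f$, where $\iota$ is the inclusion of $P$ into $\textsc{Etop}'(G)$. Then:
\begin{itemize}
\item $\Phi$ is continuous, since it is the composition of two continuous maps.
\item $\Phi$ respects edge orientation, because $P$ is oriented and $f$ traverses each edge of $P$ in the direction of $\vec{G}$.
\item $\Phi$ is injective, so the condition on repeated values holds vacuously.
\end{itemize}

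\emph{(2)$\Rightarrow$(1).} This is the substantial direction. Let $\Phi$ be as in~(2). The aim is to shortcut the repetitions of $\Phi$ and obtain an injective continuous map with the same endpoints.

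\begin{enumerate}
\item \emph{Repeated values.} Let $D$ be the set of points of $\textsc{Etop}'(G)$ that have at least two preimages under $\Phi$. By hypothesis, every point of $D$ is an end or the class of a vertex that edge-dominates an end.
\item \emph{Fibres are closed.} For each $p\in D$ the fibre $\Phi^{-1}(p)$ is closed, because points are closed in $\textsc{Etop}'(G)$. This uses the quotient: edge-inseparable points have been identified, so the space is $T_1$. Hence each fibre has a minimum $a_p$ and a maximum $b_p$.
\item \emph{Loop excision.} Remove the loops $\Phi|_{[a_p,b_p]}$ by collapsing each interval $[a_p,b_p]$ to a point. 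Do this by transfinite (or Zorn-type) recursion, using the order on $[0,1]$ induced by first-visit times. The intervals chosen must be pairwise disjoint or nested, with maximal elements kept. Let $\mathcal I$ be the family of maximal intervals $[a_p,b_p]$. These are pairwise disjoint: if two overlapped, their union would lie inside an interval determined by whichever point is visited first and last. Since the intervals in $\mathcal I$ are disjoint, there are only countably many nondegenerate ones.
\item \emph{Monotone reparametrization.} Let $q:[0,1]\to[0,1]$ be the Cantor-function-type monotone surjection that collapses each interval of $\mathcal I$ to a point. Define $\Psi$ by $\Psi(q(t))=\Phi(t)$. This is well defined, because $\Phi$ is constant on the endpoints of each collapsed interval, with both endpoints mapping to $p$.
\item \emph{Continuity of $\Psi$.} This follows from the continuity of $\Phi$ together with the fact that $q$ is a closed monotone quotient map.
\item \emph{Injectivity of $\Psi$.} By maximality of the intervals, $\Psi$ has no repeated values, including at points of $D$.
\item \emph{Conclusion.} $\Psi$ is a continuous injection from the compact space $[0,1]$ into a Hausdorff subspace, hence a homeomorphism onto its image. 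Its image is therefore a topological path from $u$ to $v$.
\end{enumerate}

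\emph{Main obstacle: the Hausdorff property.} The delicate step is the Hausdorff property of the image, which is needed for step~7, and more generally separating points after the quotient. For this we use that vertices are finitely edge-connected. Any two distinct vertex classes are then separated by a finite edge cut, which gives disjoint open sets in $\textsc{Etop}'(G)$. Distinct ends are separated by definition. A vertex and an end that it does not dominate are separated by some finite edge set $X$.

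\emph{Orientation.} Finally, the collapsing only removes whole segments, so on every edge that survives, $\Psi$ traverses it exactly as $\Phi$ did. Hence $\Psi$ still respects edge orientation, and the topological path obtained is oriented.
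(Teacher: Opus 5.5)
\textbf{Gap in steps 3--4.} Your step 3 is false, and step 4 fails with it. The maximal intervals $[a_p,b_p]$ need not be disjoint or nested. Two repeated values can interleave, $a_p<a_{p'}<b_p<b_{p'}$, and your justification tacitly assumes that the point visited first is also the point visited last.

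\emph{Counterexample.} Take the locally finite grid $\mathbb{Z}\times\{1,2,3,4\}$. It has two ends, $\omega_1$ on the left and $\omega_2$ on the right, and no dominating vertices. Put $u=(0,1)$ and $v=(1,1)$, orient the horizontal edges to match the following route, and let $\Phi$:
\begin{itemize}
\item run from $u$ leftwards along row~1 to $\omega_1$;
\item then along row~2 to $\omega_2$;
\item then along row~3 back to $\omega_1$;
\item then along row~4 to $\omega_2$;
\item finally along row~1 from $\omega_2$ to $v$.
\end{itemize}
This $\Phi$ satisfies~(2). Its only repeated values are $\omega_1$ at times $t_1<t_3$ and $\omega_2$ at times $t_2<t_4$, with $t_1<t_2<t_3<t_4$. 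Both $[t_1,t_3]$ and $[t_2,t_4]$ are maximal. Collapsing both collapses $[t_1,t_4]$, whose endpoints have different images, so $\Psi$ is not well defined. The correct output excises only one of the two loops, giving rows $1,4,1$ or rows $1,2,1$.

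\emph{Why orientation forces this.} An arc in the image of $\Phi$, such as rows $1,3,1$, can run against the orientation. So you cannot simply quote that Hausdorff path-connected spaces are arc-connected. You genuinely need a direction-preserving excision with a non-crossing family of intervals.

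\textbf{Repair.} Choose the excised intervals globally rather than fibre by fibre. Call a closed set $A\subseteq[0,1]$ with $0,1\in A$ \emph{admissible} if $\Phi(s)=\Phi(t)$ for every component $(s,t)$ of $[0,1]\setminus A$.
\begin{itemize}
\item \emph{Chains have lower bounds.} Let $(A_i)$ be a chain of admissible sets. By compactness and the chain property, every compact subinterval of a gap $(s,t)$ of $\bigcap_i A_i$ lies in a gap $(s_i,t_i)\subseteq(s,t)$ of some $A_i$. Hence there are such gaps with $s_i\to s$ and $t_i\to t$. Continuity and the Hausdorff property you verified then give $\Phi(s)=\Phi(t)$. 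Zorn's lemma therefore yields a minimal admissible $A$.
\item \emph{Minimality gives injectivity.} If $x<y$ lie in $A$ and $\Phi(x)=\Phi(y)$, then $(x,y)\cap A=\emptyset$. Otherwise $A\setminus(x,y)$ would be a strictly smaller admissible set.
\end{itemize}
Now collapse the closures of the gaps of $A$, which are countably many and pairwise disjoint. Your steps 4, 5 and 7 then go through. Step 6 follows from the injectivity observation above, not from maximality of intervals. Your orientation argument is also fine: gap endpoints are repeated values, hence ends or dominating vertices, so no edge is ever cut partially.
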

		\begin{theorem}
			
			\label{teoenum}
		Let $G$ be a connected, finitely edge-separable, countable graph and $S \subseteq V(G)$ be a $2k$-edge-connected subset. 
		If Conjecture \ref{kriesell} holds, then $G$ with $\textsc{Etop}'(G) $ contains $k$ pairwise edge-disjoint  topological $S$-Steiner trees.
	\end{theorem}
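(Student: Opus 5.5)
We reduce to the locally finite case through the expanded graph $\widetilde{G}$, and then project back to $G$. By Claims~\ref{c1} and~\ref{c2}, $\widetilde{G}$ is a connected locally finite graph. By Claim~\ref{c3}, $\widetilde{S}=\sigma^{-1}[S]$ is $2k$-edge-connected in $\widetilde{G}$. Since Conjecture~\ref{kriesell} is assumed, Theorem~\ref{localfin} gives Conjecture~\ref{conj}. Hence $\widetilde{G}$ with $\textsc{Etop}(\widetilde{G})$ contains $k$ pairwise edge-disjoint topological $\widetilde{S}$-Steiner trees $\widetilde{T}_1,\dots,\widetilde{T}_k$.

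For each $i$ we set $H_i=\rho^{-1}[\widetilde{T}_i\cap G']$, where $G'$ is the part of $\widetilde{G}$ coming from $\rho[E(G)]$. In other words, we discard the edges inside the $2k$-rays $r_v$ and pull back the remaining edges and their endpoints via $\sigma$. Since $\rho$ is injective, the $H_i$ are pairwise edge-disjoint. Each vertex $v\in S$ of infinite degree has every copy $(v,n)$ in $\widetilde{S}$, so $v\in V(H_i)$; a vertex of $S$ of finite degree is trivially in $V(H_i)$. There is one subtlety: a vertex of $\widetilde{T}_i$ might be reached in $\widetilde{T}_i$ only through the internal ray edges. Such a vertex is still a vertex of $\widetilde{G}$, so if it lies in $\widetilde{S}$ we include $\sigma$ of it in $H_i$ anyway.

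The main step is to show that $H_i$ is path-connected in $\textsc{Etop}'(G)$. We define a continuous map $\pi:\textsc{Etop}(\widetilde{G})\to\textsc{Etop}'(G)$ as follows:
\begin{itemize}
\item each edge $\rho(e)$ is sent onto $e$;
\item the entire $2k$-ray $r_v$, including its end, is collapsed to the vertex $v$;
\item every other end of $\widetilde{G}$ is sent to the corresponding edge-end of $G$, or to the vertex that edge-dominates it, in the quotient.
\end{itemize}
To check continuity, we use that a basic neighbourhood of a point of $\textsc{Etop}'(G)$, determined by a finite edge set $X$, pulls back to a set containing the neighbourhood determined by the finite set $\rho[X]$ together with at most finitely many ray edges. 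Collapsing each $r_v$ to $v$ is continuous because the neighbourhoods of $v$ in $\textsc{Etop}'(G)$ contain every component of $G-X$ in which $v$ lies, while the end of $r_v$ is edge-dominated there.

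Given $u,w\in V(H_i)$, we take a topological path in $\widetilde{T}_i$ between preimages of $u$ and $w$ and compose it with $\pi$. This gives a continuous map $\Phi:[0,1]\to\pi(\widetilde{T}_i)\subseteq H_i\cup\Omega$. Here finite edge-separability is essential: it guarantees that the only identifications $\pi$ makes on vertices are the collapses of the rays $r_v$, and that distinct vertices of $G$ are not merged in $\textsc{Etop}'(G)$. We then apply the standard fact that a path-connected Hausdorff image of $[0,1]$ contains an arc between its endpoints. Where $\textsc{Etop}'(G)$ fails to be Hausdorff, we instead use Lemma~\ref{lemapath}, after shortcutting repeated points, to extract a genuine topological path inside $\pi(\widetilde{T}_i)$.

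Path-connectedness of $\pi(\widetilde{T}_i)$ does not yet give a tree, since $\pi(\widetilde{T}_i)$ may contain topological circles. Collapsing the rays $r_v$ can create circles, for example from two tree edges that leave the same $v$. To remove them, we apply the edge-deletion procedure of Lemma~\ref{limitrees} to $\pi(\widetilde{T}_i)$:
\begin{enumerate}
\item enumerate the countably many edges;
\item delete an edge whenever it lies on a topological circle in the current subspace;
\item take the intersection of the resulting decreasing sequence.
\end{enumerate}
Path-connectedness of the final intersection is the expected main obstacle. In the locally finite case this came from Proposition~\ref{path} via $F$-limits of paths, but $G$ itself is not locally finite. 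The plan is to carry out the deletions in $\widetilde{G}$ instead, deleting $\rho(e)$ from $\widetilde{T}_i\cup\bigcup_v r_v$, and to use Proposition~\ref{path} there. This works because $\widetilde{G}$ is locally finite, and the $2k$-rays do not interfere, since they contain no edges of $H_i$. The resulting path-connected subspace is then projected back by $\pi$.

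Finally, we check that the projected object contains no circle. A circle in $\textsc{Etop}'(G)$ contains an edge $e$, and that circle would lift to a circle through $\rho(e)$ in the enlarged space, so $\rho(e)$ would have been deleted. Edge-disjointness and $S\subseteq V(T_i)$ are preserved because the procedure only deletes edges. This yields the $k$ pairwise edge-disjoint topological $S$-Steiner trees.
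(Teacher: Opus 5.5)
Your overall plan matches the paper's. You expand to $\widetilde G$, apply Theorem~\ref{localfin}, destroy the cycles created by collapsing the $2k$-rays, and pull back along $\rho^{-1}$. Your deletion of $\rho$-edges lying on circles of $\widetilde T_i\cup\bigcup_v r_v$ plays the role of the paper's deletion of an edge from every path of $\widetilde H_i$ that leaves some $r_v$ and returns to it.

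\textbf{The gap.} The load-bearing claim that $\pi$ is continuous is false. In $\textsc{Etop}'(G)$ the basic neighbourhoods of a vertex $v$ are open stars. They contain the component of $v$ in $G-X$ only when $v$ is identified with an edge-end it edge-dominates. Hence $\pi$ is discontinuous at the end $\omega_v$ of $r_v$ whenever $v$ has infinite degree and edge-dominates no edge-end.

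A simple instance is the infinite star with centre $v$ and leaves $x_n$. Every basic neighbourhood of $\omega_v$ in $\textsc{Etop}(\widetilde G)$ contains $x_n$ for all large $n$. Yet $\pi(x_n)=x_n$ lies outside the open star of radius $\frac12$ around $v=\pi(\omega_v)$.

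In the star no arc between vertices passes through $\omega_v$, but in general one can. With the unrestricted enumeration of $E(v)$ used in the construction, $\omega_v$ can have degree $2$ in $\widetilde G$ even for rayless $G$ (Figure~\ref{contra}). Nothing then prevents a tree supplied by Theorem~\ref{localfin} from containing an arc through $\omega_v$ formed by two rays. Each ray makes infinitely many excursions from some $(v,i)$ through a copy of a component of $G-v$ back to some $(v,j)$. Composed with $\pi$, this arc keeps hitting vertices outside the open star of radius $\frac12$ around $v$, arbitrarily close to the parameter sent to $v$. So $\Phi$ is not continuous, and your path-connectedness of $\pi(\widetilde T_i)$ is unproved. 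The later step ``the resulting path-connected subspace is then projected back by $\pi$'' rests on the same false continuity.

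\textbf{What is missing.} You need an argument that the arcs you actually project never pass through such a bad $\omega_v$. This is the real purpose of the paper's deletion of excursion edges in $\widetilde H_i$ before projecting and invoking Lemma~\ref{lemapath}.

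For your set $Y$, obtained after the deletions in $\widetilde T_i\cup\bigcup_v r_v$, this does hold, but you must prove it. Near $\omega_v$, each side of an arc through $\omega_v$ falls into one of three cases:
\begin{enumerate}
\item[(i)] It eventually stays in $r_v$. This cannot happen for both sides, because every $(v,n)$ is a cut vertex of $r_v$.
\item[(ii)] It makes infinitely many excursions out of $r_v$. Each excursion closes a circle through a $\rho$-edge with a segment of $r_v$, which your deletions rule out.
\item[(iii)] It eventually lies in the closure of a component of $\widetilde G-V(r_v)$ containing a ray equivalent to $r_v$. Projecting that ray shows that $v$ edge-dominates an edge-end of $G$, so $\omega_v$ was not bad.
\end{enumerate}

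Alternatively, for every $v$ that edge-dominates no edge-end, enumerate $E(v)$ blockwise along the components of $G-v$, as in the proof of Theorem~\ref{quase}. By the star--comb lemma and finite edge-separability, each such component sends only finitely many edges to $v$. The vertices $(v,p^v_{n+1})$ then cut $\omega_v$ off, so no arc between vertices passes through it.

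Without one of these additions, the projection step, and with it your argument, does not go through.
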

	\begin{proof}
Let $\widetilde{G}$ be the expanded graph of $G$ and  $\widetilde{S}=\sigma^{-1}[S]$. By Claims \ref{c1}, \ref{c2} and \ref{c3}, $\widetilde{G}$ is a connected locally finite graph and $\widetilde{S}$ is $2k$-edge-connected in $\widetilde{G}$. Assuming that Conjecture~\ref{kriesell} is true, Theorem~\ref{localfin} gives $k$ pairwise edge-disjoint topological $\tilde{S}$-Steiner trees $\widetilde{H}_1,\widetilde{H}_2,\dots,\widetilde{H}_k$ of $\widetilde{G}$ with $\textsc{Etop}(\widetilde{G})$.

 We modify the topological trees
$
\widetilde{H}_1,\widetilde{H}_2,\ldots,\widetilde{H}_k
$ into subgraphs $\widetilde{T}_1,\widetilde{T}_2,\ldots,\widetilde{T}_k$
so that the subgraphs $$\rho^{-1}[\widetilde{T}_1], \rho^{-1}[\widetilde{T}_2],\cdots, \rho^{-1}[\widetilde{T}_k]$$ of $G$ are topological trees. Notice that a finite cycle can only arise in $\rho^{-1}[\widetilde{H}_i]$ if there exists a topological path in $\widetilde{H}_i$ between two vertices belonging to the same $2k$-ray $r_v$, while this topological path lies entirely outside $r_v$. On the other hand, an infinite topological cycle may arise from a ray \(r\) whose initial vertex dominates the edge-end \([r]_E\). If this occurs, we delete the first edge of \(r\). This destroys the topological cycle, since the graph is finitely edge-separable by hypothesis.

Let $1\leq i\leq k$, and let $P\subseteq \widetilde{H}_i$ be a topological path between two vertices $u,v\in V( \widetilde{H}_i)$ that belong to the same $2k$-ray $r_v$, such that
$
V(P)\cap V(r_v)=\{u,v\}.
$
Choose any edge $e\in E(P)$ and consider
$
P'=(P-e)
$. We perform this procedure for every topological path in $\widetilde{H}_i$ with the same property as $P$, and denote the resulting subgraph by $\widetilde{T}_i$. For each $1\leq i\leq k$ denote $T_i=\rho^{-1} [\widetilde{T}_i]$. 

\begin{claim}
	\label{caca}
The subgraphs $T_1,T_2,\cdots ,T_k$ are pairwise edge-disjoint topological trees.
\end{claim} 

\begin{proof}
We first show that $T_i$ is path-connected for each $1 \leq i \leq k$.
Let $u, v \in V(T_i)$, fix $\widetilde{u} \in \sigma^{-1}(\{u\}) \cap V(\widetilde{T}_i)$
and $\widetilde{v} \in \sigma^{-1}(\{v\}) \cap V(\widetilde{T}_i)$.
Denote by $P'$ the subgraph of $\widetilde{T}_i$ obtained from a topological
path $P$ between $\widetilde{u}$ and $\widetilde{v}$ in $\widetilde{H}_i$ by
removing the edges of those topological paths in $P$ that lie entirely outside
some expanded ray~$r_v$ and have both endpoints in~$r_v$.
Observe that the subgraph $\rho^{-1}[P']$ can be viewed as a continuous
function $\Phi : [0,1] \to ||G||$ satisfying $\Phi(0) = u$ and $\Phi(1) = v$,
and such that whenever $\Phi(t) = \Phi(t')$ for $t \neq t'$, their common
image is either an end or a vertex that edge-dominates an end,
as in Figure~\ref{domin}.

\begin{figure}[htbp]
	\centering
	
	\tikzset{every picture/.style={line width=0.75pt}} 
	

	\caption{Obtaining a topological tree in $G$ containing a vertex edge-dominating an end from a topological tree in $\widetilde{G}$. }
	\label{domin}
\end{figure}

By Lemma~\ref{lemapath}, there exists a topological path between $u$ and $v$
in~$T_i$.
Note that $T_i$ contains no topological cycles, by the construction
of~$\widetilde{T}_i$.

We now prove that the topological trees \(T_i\) are pairwise edge-disjoint. Suppose, to the contrary, that there exist distinct indices \(i\) and \(j\) and an edge
$
e \in E(T_i)\cap E(T_j).
$
Since \(T_\ell=\rho^{-1}[\widetilde{T}_\ell]\) for every \(\ell\), the image of \(e\) under \(\rho\) satisfies
$
\rho(e)\in E(\widetilde{T}_i)\cap E(\widetilde{T}_j).
$
This contradicts the fact that \(\widetilde{T}_i\) and \(\widetilde{T}_j\) are edge-disjoint.
\end{proof}

Since each $\widetilde{T}_i$ contains $\widetilde{S} = \sigma^{-1}[S]$,
it follows that $S \subseteq V(T_i)$ for each $1 \leq i \leq k$.
Therefore, $G$ with $\textsc{Etop}'(G)$ contains $k$ pairwise
edge-disjoint topological $S$-Steiner trees.
		\end{proof}
		
		\begin{corollary}
			Let $G$ be a countable $2k$-edge-connected, finitely edge-separable graph. Then $G$ with $\textsc{Etop}'(G) $ contains $k$ pairwise edge-disjoint topological spanning trees.
		\end{corollary}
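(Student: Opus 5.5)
This is the special case $S=V(G)$ of Theorem~\ref{teoenum}, so the plan is to check that this choice of $S$ satisfies the hypotheses and then unwind the conclusion. Since $G$ is $2k$-edge-connected, every pair of vertices of $G$ is joined by $2k$ pairwise edge-disjoint paths, so $V(G)$ is a $2k$-edge-connected subset in the sense of the introduction. In particular $G$ is connected, provided it has at least two vertices; for a single vertex the statement is trivial. Countability and finite edge-separability are assumed outright. Theorem~\ref{teoenum} applied with $S=V(G)$ then gives $k$ pairwise edge-disjoint topological $V(G)$-Steiner trees $T_1,\dots,T_k$ in $\textsc{Etop}'(G)$.

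Each $T_i$ satisfies $V(G)\subseteq V(T_i)\subseteq V(G)$, so $V(T_i)=V(G)$. By the definition of topological spanning tree, this means $T_i$ is a topological spanning tree, and edge-disjointness carries over unchanged.

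The one real point is the hypothesis of Theorem~\ref{teoenum} that Conjecture~\ref{kriesell} holds. The corollary is stated unconditionally, so this hypothesis has to be discharged for $S=V(G)$. I would not invoke the conjecture in general. Instead, I would rerun the proof chain (Theorem~\ref{localfin}, then Theorem~\ref{teoenum}) with $S=V(G)$ and check which finite instances are actually used.

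In the expanded graph $\widetilde G$, the set $\widetilde S=\sigma^{-1}[V(G)]$ is $2k$-edge-connected by Claim~\ref{c3}. However, the vertices $w^v_n$ of the $K_{2k}$'s are not in $\widetilde S$, so the finite instances $G_n$ produced in Theorem~\ref{localfin} are genuine Steiner instances rather than spanning ones. This is where I expect the main obstacle.

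To get around it, I would enlarge $\widetilde S$ to all of $V(\widetilde G)$ and show that $\widetilde G$ itself is $2k$-edge-connected. Each $w^v_i$ lies in a $K_{2k}$ joined completely to $(v,n)$ and $(v,n+1)$. It therefore has $2k-1$ edges inside its clique and $2$ edges leaving towards those two vertices, each of which continues through the structure. Routing through the other clique members then gives $2k$ edge-disjoint paths from $w^v_i$ to $(v,n)$, and hence to any vertex of $\widetilde S$.

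With $\widetilde G$ being $2k$-edge-connected, in the finite approximations of Theorem~\ref{localfin} I would take $S_n$ to enumerate all of $V(\widetilde G)$ and take $G_n$ to be unions of edge-disjoint paths. Then $G_n$ is $2k$-edge-connected on its own vertex set once each vertex of $G_n$ is included in $S_n$; choosing $S_n$ as $V(G_n)$ at each stage arranges this. Now only Theorem~\ref{nash} is needed, which yields $k$ edge-disjoint spanning trees of each finite $G_n$. The rest of the argument of Theorems~\ref{localfin} and~\ref{teoenum} goes through verbatim and produces the topological spanning trees unconditionally.

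If this enlargement turns out to be delicate, the fallback is to state the corollary as conditional on Conjecture~\ref{kriesell}, exactly as Theorem~\ref{teoenum} is.
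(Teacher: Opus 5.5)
Your first two paragraphs are exactly the paper's proof, which is the single sentence that the corollary follows from Theorems~\ref{nash} and~\ref{teoenum} with $S=V(G)$. You are right that this substitution needs more care. Theorem~\ref{nash} is only the case $S=V(G)$ of Conjecture~\ref{kriesell}. The proof of Theorem~\ref{teoenum}, however, invokes the conjecture through Theorem~\ref{localfin} on the finite pairs $(G_n,S_n)$ inside $\widetilde G$, where $S_n$ is finite and $G_n$ is a union of edge-disjoint paths between vertices of $S_n$. These are genuine Steiner instances, and they stay so even if $\widetilde S$ is enlarged to $V(\widetilde G)$. Your check that $\widetilde G$ is $2k$-edge-connected is correct: use the paths $w^v_i(v,n)$ and $w^v_iw^v_j(v,n)$ for $j\neq i$, then Claim~\ref{c3} and transitivity of edge-connectivity.

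The repair breaks at the step ``choosing $S_n$ as $V(G_n)$ at each stage arranges this.'' This is circular: adding the interior vertices of the paths to $S_n$ forces new paths, which bring new vertices. Moreover, no finite choice can work in general. Take $G=\mathbb{Z}^2$ and $k=2$. This graph is countable and locally finite, hence finitely edge-separable with $\widetilde G=G$, and it is $4$-edge-connected. Yet in any finite subgraph with at least two vertices, the lexicographically largest vertex has degree at most $2$. So no finite subgraph is $4$-edge-connected on its own vertex set. Any approximation of $\widetilde G$ by finite subgraphs therefore produces Steiner instances, and Theorem~\ref{nash} cannot be applied to them.

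The missing idea is to approximate by finite contraction minors rather than finite subgraphs. Contracting the finitely many components of $\widetilde G-S_n$ gives finite multigraphs that remain $2k$-edge-connected, so the multigraph version of Nash-Williams--Tutte applies, followed by a compactness argument. This is precisely the content of Theorem~\ref{teo}. The clean fix is therefore to apply Theorem~\ref{teo} to the connected, locally finite, $2k$-edge-connected graph $\widetilde G$ in place of Theorem~\ref{localfin} in the proof of Theorem~\ref{teoenum}. Its $k$ edge-disjoint topological spanning trees are in particular topological $\widetilde S$-Steiner trees, which is all the rest of that proof uses. Your fallback of stating the corollary conditionally would not prove the statement as given.
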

		
		\begin{proof}
			This follows from Theorems~\ref{nash} and \ref{teoenum} for $S=V(G)$.
		\end{proof}
		
Recall that in Lemma~\ref{limitrees} we showed that there exists a topological $S$-Steiner tree inside an $F$-limit of trees that contains $S$. We prove a converse in order to obtain a version of Kriesell’s conjecture in terms of $F$-limits of trees for every countable graph.

	\begin{proposition}
		\label{fintree}
		Let $G$ be a connected locally finite graph and $T\subseteq ||G||$ be a topological tree of $G$ with $\textsc{Etop}(G)$. Then $T$ is an $F$-limit of a sequence $\langle T_n\rangle_n$ of trees of $G$.
	\end{proposition}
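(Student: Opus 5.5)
We approximate $T$ by finite trees living in larger and larger finite pieces of $G$. Since $G$ is connected and locally finite, it is countable, so fix an enumeration $V(G)=\{x_0,x_1,\dots\}$ and let $K_n=\{x_0,\dots,x_n\}$. We may assume $T$ contains a vertex; otherwise $T$ lies inside a single edge or end and a constant or trivial sequence works. Let $U_n=V(T)\cap K_n$. For each $n$ we build a finite tree $T_n\subseteq G$ with two properties: $E(T_n)\cap E(G[K_n]) = E(T)\cap E(G[K_n])$, and $V(T_n)\cap K_n=U_n$ (plus any vertex of $K_n$ that is an endpoint of an edge of $T$, so $V(T)\cap K_n\subseteq V(T_n)$). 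Given these, every vertex or edge of $T$ lies in $T_n$ for all large $n$. Conversely, every vertex or edge of $G$ outside $T$ lies outside $T_n$ for all large $n$. Hence, for any non-principal ultrafilter $F$, the $F$-limit of $\langle T_n\rangle$ is exactly $T\cap G$. This is the sense in which $T$ is an $F$-limit: ends carry no vertex or edge data, and $T$ is determined by its vertices and edges because it is closed up to ends. If needed, we also use that in the locally finite case $\textsc{Etop}$ and $\textsc{Etop}'$ agree.

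\textbf{Constructing $T_n$.} Let $D_n=T\cap G[K_n]$ together with the edges of $T$ having an endpoint in $K_n$. This is a finite forest, because a finite cycle in it would be a topological circle in $T$. Its components $C_1,\dots,C_m$ all lie in $T$, and $T$ is path-connected. So for each pair of components there is a topological path in $T$ joining them. Using the standard fact that an arc in $\|G\|$ between two vertices, after deleting a finite set of edges, can be rerouted through a finite graph path outside that set, we replace each such topological arc by a finite $G$-path. This path leaves the finite set $K_n$ and does not use edges of $G[K_n]$ outside $T$. Concretely: the arc leaves $D_n$ through an edge into a component of $G-K_n$, possibly passes through an end, and returns to $D_n$. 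Since both ends of the excursion lie in the same component of $G-K_n$ (the end's basic neighbourhood is connected), we can replace the excursion by a finite path inside that component. We add such connecting paths one at a time, merging the components of the current forest like Kruskal, and always attach to vertices outside $K_n$. Each new path is chosen minimal, so it meets the current forest only at its endpoints. The result is a finite tree $T_n\supseteq D_n$ whose only new vertices and edges lie in $G-K_n$, which gives the two required properties.

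\textbf{Main obstacle.} The delicate step is the rerouting claim: two components of $D_n$ that are joined within $T$ must be joined by a finite path in $G-K_n$ plus edges of $D_n$. We prove this by taking a topological arc $A\subseteq T$ between the components. $A\setminus D_n$ is open in $A$, so it splits into countably many open subarcs, and only finitely many of them meet an edge incident with $K_n$; each excursion is therefore contained in one component $C$ of $G-K_n$ together with its ends. Any two vertices of $C$ are joined by a finite path in $C$, and we use this to replace the excursion. Care is needed to show that an excursion starting and ending at distinct boundary edges lies in a single component of $G-K_n$. Here we use that in a locally finite graph the closure in $\|G\|$ of each component of $G-K_n$ is open-closed away from the finitely many edges meeting $K_n$, so a connected subarc avoiding those edges cannot pass between different components.
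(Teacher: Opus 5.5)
Your proof is correct and follows essentially the paper's route: approximate $T$ by finite trees that agree with $T$ on a growing finite part of $G$, so that membership of each vertex and edge is eventually constant and the $F$-limit is $T\cap G$ for any non-principal ultrafilter. The paper does the bookkeeping on edges, requiring $T_n$ to contain the first $n$ edges of $T$ and avoid the first $n$ edges outside $T$. You exhaust by vertex sets $K_n$ and handle vertices directly; the paper instead handles them through their finitely many incident edges.

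Your rerouting step makes explicit the arc/finite-cut argument that the paper only sketches (its ``components of $T$ cannot be separated by finitely many edges'' remark). One wording fix there: no excursion meets an edge incident with $K_n$ at all. The excursions are finitely many because their endpoints lie among the finitely many outer endpoints of edges of $D_n$, which the arc visits at most once each.
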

\begin{proof}
	  Consider enumerations $E(T)=\{e_n\}_{n\in\mathbb{N}}$ and $E(G)\setminus E(T)=\{e'_n\}_{n\in\mathbb{N}}$. For each $n \in \mathbb{N}$, there exists a finite tree $T_n$ that contains the edges $e_0,e_1,\cdots,e_n$ and does not contain the edges $e'_0,e'_1,\cdots,e'_n$. 
	Indeed, such a tree exists because the connected components of $T$ are infinitely edge-connected, as $T$ is a topological tree. Indeed, fix vertices $u \in V(C)$ and $v \in V(C')$ belonging to distinct
	connected components $C$ and $C'$ of~$T$.
	Since $T$ is a topological tree, there exists a topological path
	in $T$ between $u$ and~$v$.
	Hence there exist rays $r \subseteq C$ and $r' \subseteq C'$ that are
	equivalent, with $u \in V( r)$ and $v \in V(r')$.
	This shows that $C$ and $C'$ cannot be separated by any finite set of edges.
	
	Fix a non-principal ultrafilter $F$ over $\mathbb{N}$. We claim that the $F$-limit of $\langle T_n\rangle_n$ is $T$. Denote by $H$ the $F$-limit of $\langle T_n\rangle_n$.

	 \begin{claim}
	 	\label{fca}
	 	$V(H)=V(T)$.
	 	\end{claim}
	 	
	 	\begin{proof}Consider $v\in V(T)$ and an edge $e_k\in E(T)$ such that $v$ is an endpoint of $e_k$. It follows that $v\in V(T_m)$ for all $m\geq k$. Hence $\{n\in\mathbb{N}:v\in V(T_n)\} \in F$. Therefore $v\in V(H)$. Conversely, let $v\in V(H)$. It follows that $\{n\in\mathbb{N}:v\in V(T_n)\}\in F$. Suppose that $v\notin V(T)$. Consider an increasing enumeration
	 		$e'_{n_0},e'_{n_1},\ldots,e'_{n_k}$
	 		of the edges incident with $v$. Notice that
	 		$v\notin V(T_m)$
	 		for every $m\geq n_k$, since $T_m$ contains no edge incident with $v$ and $T_m$ is connected. This contradicts the fact that
	 		$\{n\in\mathbb{N}:v\in V(T_n)\}$
	 		is infinite, because $F$ is a non-principal ultrafilter. Therefore,
	 		$v\in V(T)$.
\end{proof}
	 
	 \begin{claim}
	 	\label{fca1} $E(T)=E(H)$.
	 \end{claim}
	   
	 \begin{proof} Let $e_k\in E(T)$. Then $\{n\in\mathbb{N}: e_k\notin E(T_n)\}\notin F$, since $e_k\in E(T_m)$ for all $m\geq k$, and hence this set is finite. Therefore, $\{n\in\mathbb{N}: e_k\in E(T_n)\}\in F$, which implies that $e_k\in E(H)$.
	 Conversely, let $e\in E(H)$. Then $\{n\in\mathbb{N}: e\in E(T_n)\}\in F$. Suppose that $e\notin E(T)$. Then there exists $n_0\in\mathbb{N}$ such that $e\notin E(T_n)$ for all $n\geq n_0$, and thus the set $\{n\in\mathbb{N}: e\in E(T_n)\}$ is finite, a contradiction. Hence, $e\in E(T)$.
	 This proves that $E(T)=E(H)$.
	\end{proof}
	
	By Claims \ref{fca} and \ref{fca1}, it follows that $T$ is an $F$-limit of trees.
\end{proof}

\begin{theorem}
	\label{counta}
	Let $G$ be a countable connected graph and let $S\subseteq V(G)$ be a $2k$-edge-connected subset. If Conjecture \ref{kriesell} holds, then $G$ has $k$ pairwise edge-disjoint $F$-limits of trees which contain $S$.
\end{theorem}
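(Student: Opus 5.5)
We pass to the expanded graph $\widetilde{G}$ and pull back $F$-limits of finite trees through $\rho^{-1}$. By Claims~\ref{c1}, \ref{c2} and~\ref{c3}, $\widetilde{G}$ is connected and locally finite, and $\widetilde{S}=\sigma^{-1}[S]$ is $2k$-edge-connected in $\widetilde{G}$. We will not go through topological trees in $\widetilde{G}$. Instead we reuse the finite approximation from the proof of Theorem~\ref{localfin} directly in $\widetilde{G}$.

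First, enumerate $\widetilde{S}=\{v_n\}$ and put $\widetilde{S}_n=\{v_0,\dots,v_n\}$. Let $\widetilde{G}_n$ be the finite union of $2k$ edge-disjoint paths between each pair of vertices of $\widetilde{S}_n$. Conjecture~\ref{kriesell} then gives pairwise edge-disjoint finite $\widetilde{S}_n$-Steiner trees $\widetilde{T}^1_n,\dots,\widetilde{T}^k_n$. Next, project to $G$ by setting $T^i_n=\rho^{-1}[\widetilde{T}^i_n]$. This is a finite connected subgraph of $G$: a path in $\widetilde{T}^i_n$ projects, as in Claim~\ref{c2} read backwards, to a walk in $G$, because the internal edges of a $2k$-ray $r_v$ all collapse onto the single vertex $v$. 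Its vertex set contains $S_n=\sigma[\widetilde{S}_n]$, taking care of vertices of $\widetilde S_n$ not incident to $\rho$-edges (for instance if $\widetilde T^i_n$ lies inside one $r_v$; then $v$ itself should be added). Since $T^i_n$ is finite and connected, we choose a spanning tree $R^i_n\subseteq T^i_n$, which contains $S_n$. The trees $R^1_n,\dots,R^k_n$ are pairwise edge-disjoint because $\rho$ is injective and the $\widetilde{T}^i_n$ are edge-disjoint.

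Now fix a non-principal ultrafilter $F$ and let $H_i$ be the $F$-limit of $\langle R^i_n\rangle_n$. Each $v\in S$ equals $\sigma(v_m)$ for some $m$, so $v$ lies in $S_n\subseteq V(R^i_n)$ for all $n\ge m$. That index set is cofinite, hence in $F$, so $S\subseteq V(H_i)$. Edge-disjointness of $H_i$ and $H_j$ follows exactly as in Theorem~\ref{localfin}: a common edge would lie in both $R^i_n$ and $R^j_n$ for an $F$-large, hence nonempty, set of $n$.

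The main obstacle is the projection step: $\rho^{-1}[\widetilde{T}^i_n]$ must be connected and contain $S_n$, even though $\widetilde{T}^i_n$ may touch a $2k$-ray only at vertices $w^v_j$ or at several different $(v,m)$. I would handle this by noting that every vertex of $r_v$ maps to $v$ under the obvious extension of $\sigma$ sending $w^v_j$ to $v$. With this extension, contracting each $r_v$ to $v$ gives a graph homomorphism $\widetilde{G}\to G$ under which the image of a connected subgraph is connected, and $\rho^{-1}$ of it is exactly the image minus the contracted edges. No finite edge-separability is needed, since we only claim an $F$-limit of trees and not a topological tree.
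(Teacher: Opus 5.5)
Your argument is correct, but it takes a different and more elementary route than the paper's.

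\textbf{The paper's route.} It first applies Theorem~\ref{localfin} in $\widetilde G$ to obtain $k$ edge-disjoint topological $\widetilde S$-Steiner trees. This step rests on Lemma~\ref{limitrees} and on Proposition~\ref{path}, whose proof is omitted. It then invokes Proposition~\ref{fintree} to write each topological tree as an $F$-limit of finite trees $\widetilde T^i_n$. Only then does it pull these back by $\rho^{-1}$ and take spanning trees.

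\textbf{Your route.} You bypass the topology altogether. You apply Conjecture~\ref{kriesell} to the finite graphs $\widetilde G_n$, project the resulting finite trees, and take the $F$-limit in $G$. So you use only finite combinatorics and the basic ultrafilter properties.

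\textbf{What each buys.} The paper's detour gives a conceptual link: its limits are shadows of genuine topological Steiner trees of $\widetilde G$, in the same framework as Theorems~\ref{teoenum} and~\ref{quase}. None of that structure is needed for the statement itself.

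\textbf{A further simplification.} In your route the expanded graph does no work either. Local finiteness of $\widetilde G$ is never used, because the finiteness of $\widetilde G_n$ comes from choosing finitely many finite paths. You can run the same argument directly in $G$: take $S_n$ to be an initial segment of an enumeration of the countable set $S$, and $G_n$ the union of $2k$ edge-disjoint paths for each pair in $S_n$. This proves the theorem and removes the projection step you identified as the main obstacle. It also shows that only the countability of $S$ matters, which gives Theorem~\ref{teoteo} without passing to a countable subgraph.
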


\begin{proof}
Consider the expanded graph $\widetilde{G}$ of $G$ and denote $\widetilde{S}=\sigma^{-1}[S]$. We already know that $\widetilde{S}$ is $2k$-edge-connected by Claim \ref{c3}. Since $\widetilde{G}$ is locally finite and connected by Claims \ref{c1} and \ref{c2}, by Theorem~\ref{localfin} there exist $k$ pairwise edge-disjoint $\widetilde{S}$-Steiner topological trees $\widetilde{T}_1, \widetilde{T}_2, \cdots, \widetilde{T}_k$ in $\widetilde{G}$. By Proposition~\ref{fintree}, each $\widetilde{T}_i$, which contains $\widetilde{S}$, can be viewed as an $F$-limit of a sequence $\langle \widetilde{T^i_n} \rangle_n$ of finite trees.

Set $H_n^i=\rho^{-1}[\widetilde{T}_n^i]$. Notice that $H_n^i$ is a connected subgraph of $G$. Consider a spanning tree $T_n^i$ of $H_n^i$. Denote by $T_i$ the $F$-limit of $\langle T_n^i\rangle_n$. Since $\widetilde{T}_i$ contains $\widetilde{S}$, it follows that $T_i$ contains $S$, since we only consider spanning trees of $H_n^i$. 

Notice that $T_1, T_2, \ldots, T_k$ are pairwise edge-disjoint.
Suppose, for a contradiction, that there exists $e \in E(T_i) \cap E(T_j)$
for some $i \neq j$.
It follows that $\{n \in \mathbb{N} : e \in E(T_n^i)\} \in F$ and
$\{n \in \mathbb{N} : e \in E(T_n^j)\} \in F$.
In particular,
$\{n \in \mathbb{N} : \rho(e) \in E(\widetilde{T}_n^i)\} \in F$ and
$\{n \in \mathbb{N} : \rho(e) \in E(\widetilde{T}_n^j)\} \in F$.
Therefore $\rho(e) \in E(\widetilde{T}_i) \cap E(\widetilde{T}_j)$,
contradicting the fact that $\widetilde{T}_i$ and $\widetilde{T}_j$
are edge-disjoint.
\end{proof}

\begin{example}
	\label{ex1}

An $F$-limit of trees, as well as a topological tree, that contains a prescribed set of vertices may be trivial in graphs that are not finitely edge-separable, in the sense that it consists only of vertices.

Let $\langle P_n \rangle_{n\in\mathbb{N}}$ be an enumeration of the $u$–$v$ paths in the graph of Figure \ref{edge22}. Fix a non-principal ultrafilter $F$ on $\mathbb{N}$, and let $H$ be the $F$-limit of $\langle P_n \rangle_n$.

Since each edge $e$ appears in exactly one $P_n$, we have $\{n \in \mathbb{N} : e \in E(P_n)\} \notin F$, and hence $E(H)=\emptyset$. 

On the other hand, $\{n \in \mathbb{N} : u,v \in V(P_n)\}=\mathbb{N} \in F$, so $u,v \in V(H)$.
Finally, let $x \in \bigcup_{n\in\mathbb{N}} V(P_n) \setminus \{u,v\}$. Then $x \in V(P_k)$ for some $k$, and $x \notin V(P_m)$ for all $m \ge k$. Thus $\{n \in \mathbb{N} : x \in V(P_n)\} \notin F$, implying $x \notin V(H)$. Therefore, $V(H)=\{u,v\}$.
	\end{example}
	
\begin{figure}[ht]
	\centering
		\tikzset{every picture/.style={line width=0.75pt}} 
		
		\begin{tikzpicture}[x=0.75pt,y=0.75pt,yscale=-1,xscale=1]
			
			\draw    (88,221.8) -- (119,181.6) ;
			\draw [shift={(119,181.6)}, rotate = 307.64] [color={rgb, 255:red, 0; green, 0; blue, 0 }  ][fill={rgb, 255:red, 0; green, 0; blue, 0 }  ][line width=0.75]      (0, 0) circle [x radius= 3.35, y radius= 3.35]   ;
			\draw [shift={(88,221.8)}, rotate = 307.64] [color={rgb, 255:red, 0; green, 0; blue, 0 }  ][fill={rgb, 255:red, 0; green, 0; blue, 0 }  ][line width=0.75]      (0, 0) circle [x radius= 3.35, y radius= 3.35]   ;
			\draw    (119,181.6) -- (164,181.6) ;
			\draw [shift={(164,181.6)}, rotate = 0] [color={rgb, 255:red, 0; green, 0; blue, 0 }  ][fill={rgb, 255:red, 0; green, 0; blue, 0 }  ][line width=0.75]      (0, 0) circle [x radius= 3.35, y radius= 3.35]   ;
			\draw [shift={(119,181.6)}, rotate = 0] [color={rgb, 255:red, 0; green, 0; blue, 0 }  ][fill={rgb, 255:red, 0; green, 0; blue, 0 }  ][line width=0.75]      (0, 0) circle [x radius= 3.35, y radius= 3.35]   ;
			\draw    (164,181.6) -- (194,222.6) ;
			\draw [shift={(194,222.6)}, rotate = 53.81] [color={rgb, 255:red, 0; green, 0; blue, 0 }  ][fill={rgb, 255:red, 0; green, 0; blue, 0 }  ][line width=0.75]      (0, 0) circle [x radius= 3.35, y radius= 3.35]   ;
			\draw [shift={(164,181.6)}, rotate = 53.81] [color={rgb, 255:red, 0; green, 0; blue, 0 }  ][fill={rgb, 255:red, 0; green, 0; blue, 0 }  ][line width=0.75]      (0, 0) circle [x radius= 3.35, y radius= 3.35]   ;
			\draw    (88,221.8) -- (119,132.6) ;
			\draw [shift={(119,132.6)}, rotate = 289.16] [color={rgb, 255:red, 0; green, 0; blue, 0 }  ][fill={rgb, 255:red, 0; green, 0; blue, 0 }  ][line width=0.75]      (0, 0) circle [x radius= 3.35, y radius= 3.35]   ;
			\draw [shift={(88,221.8)}, rotate = 289.16] [color={rgb, 255:red, 0; green, 0; blue, 0 }  ][fill={rgb, 255:red, 0; green, 0; blue, 0 }  ][line width=0.75]      (0, 0) circle [x radius= 3.35, y radius= 3.35]   ;
			\draw    (119,132.6) -- (165,132.6) ;
			\draw [shift={(165,132.6)}, rotate = 0] [color={rgb, 255:red, 0; green, 0; blue, 0 }  ][fill={rgb, 255:red, 0; green, 0; blue, 0 }  ][line width=0.75]      (0, 0) circle [x radius= 3.35, y radius= 3.35]   ;
			\draw [shift={(119,132.6)}, rotate = 0] [color={rgb, 255:red, 0; green, 0; blue, 0 }  ][fill={rgb, 255:red, 0; green, 0; blue, 0 }  ][line width=0.75]      (0, 0) circle [x radius= 3.35, y radius= 3.35]   ;
			\draw    (165,132.6) -- (194,222.6) ;
			\draw [shift={(194,222.6)}, rotate = 72.14] [color={rgb, 255:red, 0; green, 0; blue, 0 }  ][fill={rgb, 255:red, 0; green, 0; blue, 0 }  ][line width=0.75]      (0, 0) circle [x radius= 3.35, y radius= 3.35]   ;
			\draw [shift={(165,132.6)}, rotate = 72.14] [color={rgb, 255:red, 0; green, 0; blue, 0 }  ][fill={rgb, 255:red, 0; green, 0; blue, 0 }  ][line width=0.75]      (0, 0) circle [x radius= 3.35, y radius= 3.35]   ;
			\draw    (88,221.8) -- (118,86.6) ;
			\draw [shift={(118,86.6)}, rotate = 282.51] [color={rgb, 255:red, 0; green, 0; blue, 0 }  ][fill={rgb, 255:red, 0; green, 0; blue, 0 }  ][line width=0.75]      (0, 0) circle [x radius= 3.35, y radius= 3.35]   ;
			\draw [shift={(88,221.8)}, rotate = 282.51] [color={rgb, 255:red, 0; green, 0; blue, 0 }  ][fill={rgb, 255:red, 0; green, 0; blue, 0 }  ][line width=0.75]      (0, 0) circle [x radius= 3.35, y radius= 3.35]   ;
			\draw    (118,86.6) -- (165,86.6) ;
			\draw [shift={(165,86.6)}, rotate = 0] [color={rgb, 255:red, 0; green, 0; blue, 0 }  ][fill={rgb, 255:red, 0; green, 0; blue, 0 }  ][line width=0.75]      (0, 0) circle [x radius= 3.35, y radius= 3.35]   ;
			\draw [shift={(118,86.6)}, rotate = 0] [color={rgb, 255:red, 0; green, 0; blue, 0 }  ][fill={rgb, 255:red, 0; green, 0; blue, 0 }  ][line width=0.75]      (0, 0) circle [x radius= 3.35, y radius= 3.35]   ;
			\draw    (165,86.6) -- (194,222.6) ;
			\draw [shift={(194,222.6)}, rotate = 77.96] [color={rgb, 255:red, 0; green, 0; blue, 0 }  ][fill={rgb, 255:red, 0; green, 0; blue, 0 }  ][line width=0.75]      (0, 0) circle [x radius= 3.35, y radius= 3.35]   ;
			\draw [shift={(165,86.6)}, rotate = 77.96] [color={rgb, 255:red, 0; green, 0; blue, 0 }  ][fill={rgb, 255:red, 0; green, 0; blue, 0 }  ][line width=0.75]      (0, 0) circle [x radius= 3.35, y radius= 3.35]   ;
			\draw  [dash pattern={on 0.84pt off 2.51pt}]  (140,63) -- (140,75.6) ;
			
			\draw (73,211.4) node [anchor=north west][inner sep=0.75pt]    {$u$};
			\draw (199,212.4) node [anchor=north west][inner sep=0.75pt]    {$v$};
			\draw (132,188.4) node [anchor=north west][inner sep=0.75pt]    {$P_{0}$};
			\draw (133,138.4) node [anchor=north west][inner sep=0.75pt]    {$P_{1}$};
			\draw (134,92.4) node [anchor=north west][inner sep=0.75pt]    {$P_{2}$};

		\end{tikzpicture}
	
		\caption{The vertices $u$ and $v$ are the $F$-limit of the paths $P_n$.}
			\label{edge22}
	\end{figure}

We conclude this section by showing that, for uncountable graphs, the difficulty in finding edge-disjoint topological $S$-Steiner trees lies in the fact that $S$ may also be uncountable.

\begin{theorem}
	\label{teoteo}
	Let $G$ be a connected uncountable graph and $S\subseteq V(G)$  be a countable $2k$-edge-connected subset. If Conjecture \ref{kriesell} holds, then $G$ has $k$ pairwise edge-disjoint $F$-limits of finite trees which contain $S$.
\end{theorem}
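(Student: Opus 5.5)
The plan is to reduce Theorem~\ref{teoteo} to the countable case, Theorem~\ref{counta}, by passing to a countable connected subgraph $G'\subseteq G$ in which $S$ remains $2k$-edge-connected. Enumerate $S=\{v_n\}_{n\in\mathbb{N}}$; if $S$ is finite, the statement follows directly from Conjecture~\ref{kriesell} applied to a finite subgraph, taking constant sequences. For each pair $v_i,v_j\in S$, fix $2k$ pairwise edge-disjoint $v_i$–$v_j$ paths in $G$, and let $G'$ be the union of all these paths over all pairs. Since there are countably many pairs and each path is finite, $G'$ is countable.

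$G'$ is connected: every vertex lies on a path between two vertices of $S$, and any two vertices of $S$ are joined inside $G'$ (if $|S|=1$, just take $G'$ to be that single vertex). By construction, any two vertices of $S$ are joined by $2k$ edge-disjoint paths in $G'$, so $S$ is a $2k$-edge-connected subset of $G'$. Theorem~\ref{counta} then gives $k$ pairwise edge-disjoint $F$-limits of trees $T_1,\dots,T_k$ in $G'$ containing $S$.

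It remains to check that these are $F$-limits of \emph{finite} trees of $G$, as the statement requires. Look inside the proof of Theorem~\ref{counta}. There the approximating trees $T_n^i$ are spanning trees of $H^i_n=\rho^{-1}[\widetilde{T}^i_n]$, where $\widetilde{T}^i_n$ are the finite trees produced by Proposition~\ref{fintree}. Each $\widetilde{T}^i_n$ is finite, so $H^i_n$ is finite and hence so is $T^i_n$.

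This finiteness point is the step needing care: the existence of the finite trees in Proposition~\ref{fintree} relies on connecting finitely many chosen edges inside the locally finite $\widetilde{G}$, which yields finite trees. If needed, one can also reprove this directly by mimicking Theorem~\ref{localfin}. Take $G_n$ to be the finite union of the chosen edge-disjoint paths among $v_0,\dots,v_n$, apply Conjecture~\ref{kriesell} to obtain finite edge-disjoint $S_n$-Steiner trees $T^1_n,\dots,T^k_n$, and take $F$-limits. Then $S\subseteq V(H_i)$ since $S_n$ increases, and edge-disjointness follows from closure of $F$ under intersections. This direct route in fact avoids Theorem~\ref{counta} entirely and is the argument I would write.

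Finally, since $G'\subseteq G$, the trees $T^i_n$ are finite trees of $G$. The $F$-limits taken in $G$ coincide with those taken in $G'$, because no element outside $G'$ ever appears in the sequence. So the limits $H_1,\dots,H_k$ are the required pairwise edge-disjoint $F$-limits of finite trees of $G$ containing $S$.
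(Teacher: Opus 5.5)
Your proposal is correct. Your first route is exactly the paper's proof: take the union of $2k$ edge-disjoint paths for each pair of vertices of $S$, note that it is countable, then apply Theorem~\ref{counta}. You also make explicit two points the paper leaves implicit. This subgraph is connected, which is needed to invoke Theorem~\ref{counta}. And the trees coming out of Theorem~\ref{counta} are finite, being spanning trees of $\rho^{-1}$ of the finite trees of Proposition~\ref{fintree}.

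The route you say you would actually write is genuinely different. You apply Conjecture~\ref{kriesell} directly to the finite graphs $G_n$ spanned by the chosen paths among $v_0,\dots,v_n$, and take $F$-limits of the resulting Steiner trees. In effect you run the first half of the proof of Theorem~\ref{localfin}, with $G_n$ finite because only finitely many chosen paths are used rather than because of local finiteness. You then stop before Lemma~\ref{limitrees}, since no topological tree is required here. This avoids the expanded graph $\widetilde{G}$, Theorem~\ref{localfin} and Proposition~\ref{fintree} altogether. It gives finiteness of the approximating trees for free, and uses nothing beyond the non-principality of $F$ and its closure under finite intersections. It also shows that uncountability of $G$ plays no role: the argument works for any connected graph with a countable $2k$-edge-connected $S$, so in particular it reproves Theorem~\ref{counta} in a few lines.

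The paper's route buys economy of exposition by reusing an already established theorem, at the cost of resting on the whole topological machinery. Yours is self-contained and more elementary.
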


\begin{proof}
Let $H$ be the subgraph obtained by taking, for each pair of vertices in $S$, a collection of $2k$ edge-disjoint paths between them. Then $S \subseteq V(H)$ and $S$ is $2k$-edge-connected in $H$. Moreover, for each pair of vertices in $S$ we select only finitely many paths, and since $S$ is countable, it follows that $H$ is countable. The result now follows by applying Theorem~\ref{counta} to $H$.
\end{proof}
\section{Rayless case}
\label{rayless}

We restrict our attention to rayless graphs. The idea is to decompose the rayless graph into countable
connected subgraphs in such a way that a subset of $S$ contained in a subgraph of the
decomposition remains $2k$-edge-connected in this subgraph. For this, we use bond-faithful decompositions.

\begin{definition}
	Let $G$ be a graph. A \emph{decomposition} of $G$ is a family $\{G_i : i \in I\}$ of subgraphs of $G$ such that every edge of $G$ belongs to exactly one $G_i$.
	A subgraph $H \subseteq G$ is called \emph{bond-faithful} if every finite bond of $H$ is a finite bond of $G$. 	A decomposition $\{G_i : i \in I\}$ is called:
	\begin{enumerate}
		\item \emph{bond-faithful} if:
		\begin{itemize}
			\item each $G_i$ is bond-faithful; and
			\item every finite cut of $G$ is contained in some $G_i$;
		\end{itemize}
		
		\item an \emph{$\alpha$-decomposition}, for an infinite cardinal $\alpha$, if $|V(G_i)| \leq \alpha$ for every $i \in I$.
	\end{enumerate}
\end{definition}

\begin{theorem}[{\cite[Theorem 3]{Laviolette2005}}]
	\label{laviolette}
	Every graph admits a bond-faithful decomposition into countable and connected subgraphs.
\end{theorem}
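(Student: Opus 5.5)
The plan is to obtain the pieces as classes of an equivalence relation on $E(G)$ generated by finite bonds. Each class should be countable, closed under the finite bonds of $G$, and (after some adjustment) connected. We may assume $G$ is connected; otherwise we treat each component separately.

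\textbf{Step 1 (countably many finite bonds through an edge).} The first step is to show that every edge $e=uv$ lies in only finitely many bonds of size $n$, for each $n\in\mathbb{N}$; the argument is by induction on $n$. If $e$ is a bridge, $\{e\}$ is the only bond through it. Otherwise fix a $u$–$v$ path $P$ in $G-e$. Any finite bond $B\ni e$ of size $n$ must contain an edge $f\in E(P)$, so there are only finitely many choices for $f$. For each such $f$, $B\setminus\{e\}$ is determined as a bond of size $n-1$ through $f$ in the graph obtained by contracting or deleting suitably, to which the induction hypothesis applies. Consequently each edge lies in only countably many finite bonds.

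\textbf{Step 2 (the classes).} Declare $e\sim f$ if there is a chain $e=e_0,e_1,\dots,e_m=f$ in which consecutive edges share a finite bond. By Step 1 and a countable closure argument (a countable union of countable sets), each $\sim$-class is countable. Every finite bond lies inside one class, and every finite cut is a disjoint union of finite bonds. To get whole finite cuts inside a single piece, I would merge classes along the edges of a spanning structure. Concretely, I would run a transfinite recursion, or take a continuous chain of countable elementary submodels $M_\xi$, and let the pieces be the edge sets $E(G)\cap(M_{\xi+1}\setminus M_\xi)$. Elementarity makes each piece a union of $\sim$-classes, and it also lets us add countably many finite paths to make each piece connected without destroying closedness.

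\textbf{Step 3 (bond-faithfulness).} For a piece $H$, I would show that a finite bond $D$ of $H$ is a cut of $G$. The test is that $D$ meets every finite cycle $C$ of $G$ in an even number of edges. Since $H$ is a union of $\sim$-classes, $C\cap E(H)$ can be rerouted: each segment of $C$ leaving $H$ should be replaceable by a path inside $H$ with the same endpoints. This needs the closure property that vertices of $H$ joined outside $H$ are not separated by finite cuts of $H$. Minimality in $H$ then makes $D$ a bond of $G$.

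The \textbf{main obstacle} is Steps 2 and 3 together. We must make the pieces simultaneously connected, countable, edge-disjoint, closed under finite cuts, and satisfy the rerouting property. I would try elementary submodels first, since they deliver all of these closure properties at once. The delicate point is the rerouting argument at the boundary of $M_\xi$.
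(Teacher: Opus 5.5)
The paper gives no proof of this statement (it is quoted from Laviolette), so your proposal has to stand on its own, and as written it has gaps. Step~1 is fine: deleting $e$ suffices, since $B\setminus\{e\}$ is a bond of $G-e$ of size $n-1$ through an edge of $P$. The countability of the $\sim$-classes is also fine.

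Step~2, however, aims at something impossible. With countable pieces one cannot have every finite \emph{cut} inside a piece: in $K_{1,\aleph_1}$ every finite set of edges is a cut, which forces all edges into one piece. The intended condition is that every finite \emph{bond} lies in a piece. Your $\sim$-closure already delivers this, so no merging is needed.

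You also cannot ``add finite paths'' for connectivity, because the pieces must partition $E(G)$ and the connecting edges typically lie in an earlier piece. For example, in a double star with $\aleph_1$ leaves at each centre, each new piece $E(G)\cap(M_{\xi+1}\setminus M_\xi)$ is two disjoint stars. Instead, split each piece into its components. This is harmless: a finite bond of $G$ contained in a bond-faithful $H$ is a bond of $H$, hence lies in one component, and components of bond-faithful subgraphs are bond-faithful.

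Finally, a continuous chain of countable models only exhausts graphs of size at most $\aleph_1$. In general you need induction on $|G|$ with models of size $<|G|$, re-decomposing each piece and using that bond-faithfulness is transitive.

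The real gap is Step~3. For connected $H$, your closure property is equivalent to $H$ being bond-faithful. So the rerouting argument restates the goal rather than proving it, and you stop exactly where the work is. For disconnected pieces the property is even false: in the double star, the two centres are joined outside the piece but lie in different components of it.

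The missing ingredient is an elementarity lemma. Let $M\prec H(\theta)$ be countable with $G\in M$, and write $E(M)=E(G)\cap M$. If $a,b\in M$ are joined by a path whose inner vertices avoid $M$, then they are joined by infinitely many internally disjoint such paths. To see this, let $A\in M$ be the set of inner-vertex tuples of $a$--$b$ paths of that length. If some countable $Y$ met every member of $A$, such a $Y$ would exist in $M$, and then $Y\subseteq M$, contradicting the given path. So members of $A$ avoiding $M$ and finitely many previously chosen tuples can be picked greedily. For larger models, use $|Y|\le|M|$ with $|M|\subseteq M$.

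Since an edge with both ends in $M$ belongs to $M$, all vertices of $M$ in one component of $G-E(M)$ are pairwise infinitely edge-connected in $G-E(M)$. Hence every finite bond $D$ of $G-E(M)$ has a side $X$ containing no vertex of $M$. Then $E_G(X,V\setminus X)=D$, and by minimality $D$ is a bond of $G$.

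Three ingredients are what actually make your scheme work:
\begin{enumerate}
\item this lemma and its consequence for $G-E(M)$;
\item the easy elementarity argument that $(V\cap M,E\cap M)$ is bond-faithful;
\item the same reasoning relativized to $(V\cap M_{\xi+1},E\cap M_{\xi+1})-E(M_\xi)$, using $M_\xi\in M_{\xi+1}$.
\end{enumerate}
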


	\begin{proposition}[{\cite[Proposition 1]{Laviolette2005}}]
		\label{decom}
		If $H$ is a member of a bond-faithful $\alpha$-decomposition of $G$ and $x, y$ are any two vertices of $H$, then
		$$
		\gamma_H(x, y) = \min\{\alpha, \gamma_G(x, y)\}
		$$ where $\gamma_H(x,y)$ and $\gamma_G(x,y)$ denote the edge-connectivity between $x$ and $y$ in $H$ and $G$, respectively.
	\end{proposition}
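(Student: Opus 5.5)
The plan is to prove the two inequalities separately. The upper bound $\gamma_H(x,y)\le\min\{\alpha,\gamma_G(x,y)\}$ should be immediate. Since $H\subseteq G$, every family of pairwise edge-disjoint $x$–$y$ paths in $H$ is also such a family in $G$, so $\gamma_H(x,y)\le\gamma_G(x,y)$. Since the decomposition is an $\alpha$-decomposition, $|V(H)|\le\alpha$. Because all graphs here are simple and $\alpha$ is infinite, this gives $|E(H)|\le\alpha$. Hence any family of edge-disjoint paths in $H$ has at most $\alpha$ members, and $\gamma_H(x,y)\le\alpha$.

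For the lower bound, suppose for a contradiction that $\gamma_H(x,y)<\min\{\alpha,\gamma_G(x,y)\}$. The central case is when $n=\gamma_H(x,y)$ is finite.

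\begin{itemize}
\item By Menger's theorem (edge version) in $H$, there is a set $B\subseteq E(H)$ with $|B|=n$ separating $x$ from $y$ in $H$.
\item Choose such a $B$ of minimum size. Let $X$ be the vertex set of the component of $H-B$ containing $x$. Then $B=E_H(X,V(H)\setminus X)$, since any $B$-edge with both ends outside $X$ could be dropped. Also $H[X]$ is connected.
\item Using minimality once more, $H[V(H)\setminus X]$ should be connected as well: every vertex outside $X$ that is not joined to $y$ in $H-B$ contributes no $B$-edges, by minimality, so $B$ is exactly the cut between the component of $x$ and the component of $y$. Since $H$ is connected as a member of the decomposition (the paper uses the connected version from Theorem~\ref{laviolette}), $B$ is then a finite bond of $H$.
\item Bond-faithfulness of $H$ now says that $B$ is a finite bond of $G$. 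So $G-B$ has exactly two components, and every edge of $B$ joins them.
\item The connected sets $H[X]\ni x$ and $H[V(H)\setminus X]\ni y$ each lie in a single component of $G-B$. Some edge of $B$ joins them, and edges of $B$ run between the two components of $G-B$. Hence they lie in different components, so $B$ separates $x$ from $y$ in $G$.
\item Therefore $\gamma_G(x,y)\le n$, contradicting $n<\gamma_G(x,y)$.
\end{itemize}

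If $\gamma_H(x,y)$ is infinite and $\alpha=\aleph_0$, which is the case used in this paper via Theorem~\ref{laviolette}, then $\gamma_H(x,y)\ge\aleph_0=\alpha\ge\min\{\alpha,\gamma_G(x,y)\}$ and there is nothing to prove. So the countable decomposition we need is fully covered by the finite argument above.

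I expect the main obstacle to be the remaining case of general $\alpha$ with an infinite $\kappa=\gamma_H(x,y)<\min\{\alpha,\gamma_G\}$. Finite bonds do not control infinite connectivities, and Menger's theorem for infinite cardinals is only available in its weak form. There I would try a transfinite greedy construction: build edge-disjoint $x$–$y$ paths in $H$ one at a time, each avoiding the fewer than $\kappa^+$ edges already used. At each step, any obstruction would have to be a set of fewer than $\kappa^+$ edges separating $x$ from $y$ in $H$. I would then try to use the second property of a bond-faithful decomposition (every finite cut of $G$ lies in some $G_i$), together with $\gamma_G(x,y)>\kappa$, to lift that obstruction to $G$ and reach a contradiction. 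If this cannot be made to work, I would fall back on quoting Laviolette's argument for this case, since only $\alpha=\aleph_0$ is needed in what follows.
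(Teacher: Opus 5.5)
The paper does not prove this proposition; it is quoted from Laviolette, so your argument has to stand on its own. For $\alpha=\aleph_0$ it does.

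\textbf{The case $\alpha=\aleph_0$.} The upper bound via $H\subseteq G$ and $|E(H)|\le\alpha$ is fine. In the finite case, your minimum separator $B$ really is $E_H(X,Y)$, where $X,Y$ are the components of $x,y$ in $H-B$: apply minimality to $E_H(Y,V(H)\setminus Y)\subseteq B$, then use connectivity of $H$. Hence $B$ is a bond of $H$, so a bond of $G$, and it separates $x$ from $y$ in $G$.

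The argument uses only that $H$ is a connected bond-faithful subgraph; neither the covering condition nor the size bound enters. This is exactly the instance the paper needs, namely that $S\cap V(D_i)$ stays $2k$-edge-connected in $D_i$.

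Connectivity of $H$ is a genuine extra hypothesis; it does not follow from $H$ being a member. Take $K_{\aleph_0}$ with vertex set split into two infinite sets $V_1,V_2$, and members $K[V_1]\cup K[V_2]$ and the complete bipartite graph between $V_1$ and $V_2$. Neither member nor $G$ has a nonempty finite cut, so this is a bond-faithful $\aleph_0$-decomposition. Yet $\gamma_H(x,y)=0$ for $x\in V_1$, $y\in V_2$. So state connectivity as a hypothesis; it holds for the decomposition of Theorem~\ref{laviolette}.

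\textbf{The gap: uncountable $\alpha$.} No argument, transfinite greedy or otherwise, can close this case. With the definitions as written here, the statement is false for $\alpha>\aleph_0$: faithfulness is required only for finite bonds, and an $\alpha$-decomposition only requires $|V(G_i)|\le\alpha$. Every $\aleph_0$-decomposition is then also an $\aleph_1$-decomposition. The formula would therefore force a countable member to carry $\aleph_1$ edge-disjoint $x$--$y$ paths whenever $\gamma_G(x,y)\ge\aleph_1$.

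Concretely, for $i<\omega_1$ let $Q_i$ be the complete graph on $\{x,y\}\cup\{b_{i,n}:n\in\mathbb{N}\}$, keeping the edge $xy$ only in $Q_0$, and let $G=\bigcup_{i<\omega_1}Q_i$.
\begin{itemize}
\item Each nonempty cut of a $Q_i$ is infinite.
\item Every nonempty cut of $G$ splits some $Q_i$, since they all contain $x$, so it is infinite as well.
\item Hence $\{Q_i\}$ is a bond-faithful decomposition into countable connected subgraphs, in particular an $\aleph_1$-decomposition.
\item But $\gamma_G(x,y)=\aleph_1$ via the paths $xb_{i,0}y$, whereas $\gamma_{Q_i}(x,y)=\aleph_0$.
\end{itemize}

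Your own remark that finite bonds do not control infinite connectivities is exactly the obstruction. Any correct version for uncountable $\alpha$ must constrain infinite bonds too, presumably those of size less than $\alpha$. So your fallback of quoting Laviolette would import hypotheses different from the ones stated here.

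The right fix is to restrict the proposition to $\alpha=\aleph_0$ and connected $H$. There your proof is complete, and it covers everything the paper does with the proposition.
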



 The proof for rayless graphs is lengthy and will be given in steps.  We begin by recalling the structure of rayless graphs.

\begin{definition}
	A graph $G$ has rank $0$ if it is finite.
	Let $\alpha > 0$ be an ordinal. A graph has rank $\alpha$ if it does not have a smaller rank and there is a finite subgraph $F \subseteq G$, called a \emph{reducer}, such that every connected component of $G - F$ has rank smaller than $\alpha$.
\end{definition}

\begin{theorem}[\cite{Schmidt1982}]
	A graph is rayless if and only if it has a rank.
\end{theorem}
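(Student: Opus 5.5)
The plan is to prove the two implications separately. Rank implies rayless goes by transfinite induction on the rank. Rayless implies rank goes by contraposition: from a graph with no rank we build a ray greedily, staying inside components that have no rank.

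\emph{Rank implies rayless.} We induct on the rank $\alpha$. A graph of rank $0$ is finite and so contains no ray. Now let $G$ have rank $\alpha>0$ with reducer $F$, and suppose $R$ is a ray in $G$. Since $F$ is finite, $R$ meets $V(F)$ in only finitely many vertices, so some tail $R'$ of $R$ avoids $F$. Being connected, $R'$ lies in a single component $C$ of $G-F$. But $C$ has rank smaller than $\alpha$, so $C$ is rayless by the induction hypothesis, a contradiction.

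\emph{Rayless implies rank.} The key step is the following observation. Let $D$ be a connected graph with no rank, and let $F\subseteq D$ be a finite subgraph, say a single vertex $v$. Then some component of $D-F$ has no rank. Indeed, suppose every component $C$ of $D-F$ had a rank $\beta_C$. The components form a set, so $\beta=\sup_C(\beta_C+1)$ is an ordinal. Then $F$ is a reducer witnessing that $D$ has some rank at most $\beta$ (the least ordinal for which a reducer exists), contradicting the assumption. The same argument with $F=\emptyset$ shows that if $G$ has no rank, some component of $G$ has no rank; here one should check that the definition allows $F=\emptyset$, or else use a single vertex as the reducer.

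Now let $G$ have no rank and let $D_0$ be a component of $G$ with no rank; $D_0$ is then infinite, and we pick $v_0\in V(D_0)$. Suppose we have built a path $v_0v_1\cdots v_n$ and a connected subgraph $D_n$ with no rank, where $v_n\in V(D_n)$ and $D_n$ is a component of $G-\{v_0,\dots,v_{n-1}\}$. By the observation applied to $D_n$ and $F=\{v_n\}$, some component $D_{n+1}$ of $D_n-v_n$ has no rank. Since $D_n$ is connected and $D_{n+1}$ is a component of $D_n-v_n$, the vertex $v_n$ has a neighbour $v_{n+1}\in V(D_{n+1})$. This $v_{n+1}$ is new, because $D_{n+1}$ avoids $v_0,\dots,v_n$. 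Continuing forever produces a ray $v_0v_1v_2\cdots$ in $G$, a contradiction.

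The only delicate point is the supremum argument in the observation: we need that a graph has a rank as soon as some reducer leaves components of bounded rank. This holds because the least ordinal admitting such a reducer then exists. The rest is routine.
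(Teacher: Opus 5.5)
Your proof is correct. The paper gives no proof of this statement; it quotes it from Schmidt's thesis and uses it as a black box. Your argument is the standard proof. One direction is transfinite induction on the rank. For the converse, you observe that removing a finite set from a rankless graph always leaves a rankless component, using the supremum of the component ranks and the least ordinal admitting a reducer, and you iterate this to grow a ray vertex by vertex.

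On the point you left open, you do not need $F=\emptyset$. Applying your observation to $G$ with $F=\{w\}$ gives a connected rankless component of $G-w$, which can serve as $D_0$. The induction only uses that $D_n$ is connected, rankless and disjoint from $\{v_0,\dots,v_{n-1}\}$. It never uses that $D_n$ is a component of $G-\{v_0,\dots,v_{n-1}\}$.
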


\begin{theorem}
\label{final}
Let $G$ be a rayless connected graph and $S\subseteq V(G)$ be a $2k$-edge-connected subset. If Conjecture \ref{kriesell} holds, then $G$ contains $k$ pairwise edge-disjoint $F$-limits of trees which contain $S$. 
\end{theorem}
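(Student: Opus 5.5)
The plan is to reduce to the countable case, Theorem~\ref{counta}, using a bond-faithful decomposition. I would then glue the countable pieces together along the tree-like structure that raylessness provides.

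\emph{Step 1: decompose.} By Theorem~\ref{laviolette}, fix a bond-faithful decomposition $\{G_i : i\in I\}$ of $G$ into countable connected subgraphs. For each $i$ put $S_i = S\cap V(G_i)$. By Proposition~\ref{decom} with $\alpha=\aleph_0$, for $x,y\in S_i$ we have $\gamma_{G_i}(x,y)=\min\{\aleph_0,\gamma_G(x,y)\}\ge 2k$. So $S_i$ is $2k$-edge-connected in $G_i$. Theorem~\ref{counta} then gives $k$ pairwise edge-disjoint $F$-limits of trees $L^1_i,\dots,L^k_i$ in $G_i$ containing $S_i$. The members of the decomposition are edge-disjoint, so the unions $L^j=\bigcup_i L^j_i$ are pairwise edge-disjoint for distinct $j$. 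The remaining work is to show that each $L^j$ contains all of $S$ and is an $F$-limit of trees of $G$.

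\emph{Step 2: connect the pieces.} A single member $G_i$ need not contain two given vertices $x,y\in S$. I would handle this by induction on the rank of $G$. If $G$ has rank $0$ it is finite, and Conjecture~\ref{kriesell} applies directly. Otherwise, let $F$ be a finite reducer. Every component $C$ of $G-F$ has smaller rank, and $C$ is attached to $G$ through a set $\partial C$ of edges to $F$. I would enlarge each component to $C^+$, obtained from $C$ together with $F$ and $\partial C$ by contracting $F$ to a vertex, or by keeping $F$ with finitely many auxiliary edges. The aim is that $(S\cap V(C))\cup V(F)$ is $2k$-edge-connected in $C^+$ and that $C^+$ still has rank less than $\alpha$. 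The induction hypothesis then yields $k$ edge-disjoint $F$-limits of trees in each $C^+$.

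Each of these $F$-limits passes through $F$. To make the components consistent, I would merge them at $F$. Since $F$ is finite, the ultrafilter can choose, for each $j$, a single restriction of the $j$-th tree to $F\cup\partial C$ that works for a set in the ultrafilter. There are only finitely many possible patterns, and an ultrafilter selects one of any finite partition. The countably many or arbitrarily many components then have to be indexed simultaneously. For that I would index the approximating finite trees by pairs (finite set of components, $n$) and pass to an ultrafilter on this directed index set, or, when the number of components is countable, diagonalize.

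Finally, I would check the three required properties. Edge-disjointness is inherited as in the proof of Theorem~\ref{counta}, vertices of $S$ survive in the limit, and each limit is again an $F$-limit of genuine trees. For the last point, the finite approximants glue along the finite set $F$ into trees, after deleting one edge per cycle created inside $F$.

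\emph{Main obstacle.} I expect the main difficulty to be the gluing step: making the $F$-limits chosen independently in the components agree on the finite reducer while keeping the approximants trees and edge-disjoint. The auxiliary edges used to preserve $2k$-edge-connectivity in $C^+$ must be removable without disconnecting the limit. There is also a second issue: the index set of components may be uncountable, so an ultrafilter on $\mathbb{N}$ may not suffice. My first attempt at that would be to restrict attention to the countably many components that meet the countable pieces $G_i$ of Step 1 containing a chosen countable subset of $S$, and treat the rest by transfinite induction on rank.
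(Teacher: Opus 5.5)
Your Step~1 matches the paper's setup: the bond-faithful decomposition into countable connected fragments, Proposition~\ref{decom} inside each fragment, Theorem~\ref{counta} in each fragment, and edge-disjointness inherited from the decomposition. But the gluing step is the real content of the theorem, you leave it open, and the mechanism in Step~2 does not work. Write $X$ for your reducer, to avoid the clash with the ultrafilter $F$.

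First, $(S\cap V(C))\cup V(X)$ need not be $2k$-edge-connected in $C^+$. A reducer vertex may have degree less than $2k$, or be cut off from $S$ by few edges. Auxiliary edges are not edges of $G$, so approximants using them must be cut, with no guarantee of reconnection. Contracting $X$ to one vertex $f$ fixes the terminal set when $S\not\subseteq V(C)$. However, lifting back turns each approximant into a forest attached to possibly different vertices of $X$. Reconnecting those vertices needs $k$ pairwise edge-disjoint connectors in $G$, compatible across all components, which is the original problem again.

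Second, the synchronization fails. $\partial C$ is infinite as soon as a vertex of $X$ has infinitely many neighbours in $C$, so there are not finitely many patterns. Even patterns on $X$ alone are fixed by a separate member of $F$ for each of infinitely (possibly uncountably) many components, and such a family need not have its intersection in $F$. An ultrafilter on a directed set of pairs gives a different notion of limit from the one in the statement, which uses a non-principal ultrafilter on $\mathbb{N}$. Diagonalizing only handles countably many components.

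The missing idea is to glue only at vertices of $S$, so that gluing costs no edges and each gluing condition is local. Enlarge $S$ to all vertices $2k$-edge-connected to it. Then every vertex that no finite edge set separates from $S$ lies in $S$. Hence, if infinitely many components of $G-X$ (or of a piece minus its reducer) meet $S$, the finite reducer contains a vertex of $S$.

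The paper then merges fragments so that every vertex of $S$ lies in a fragment with infinitely many $S$-vertices. Using the rank hierarchy of reducers, it joins each such fragment to a fixed root fragment by a finite chain in which consecutive fragments share a vertex of $S$. Each $F$-limit built in a fragment $D$ contains all of $S\cap V(D)$. So consecutive approximants $T^j_{D,n}$ meet at the shared $S$-vertex for an $F$-large set of $n$.

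At stage $n$ one takes a spanning tree $K^j_n$ of the component of $\bigcup_D T^j_{D,n}$ containing the root's tree. A vertex $v\in S$ then lies in the $F$-limit $K^j$ using only the finitely many ultrafilter conditions along its own chain. So one ultrafilter on $\mathbb{N}$ suffices however many fragments there are. Edge-disjointness of $K^1,\dots,K^k$ follows because each edge lies in a single fragment.
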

\begin{proof}
If $G$ has rank $0$, then the result follows from Conjecture \ref{kriesell}. If $S$ is countable, then the result follows from Theorem \ref{counta} when $G$ is countable and from Theorem \ref{teoteo} when $G$ is uncountable.  We may assume that $S$ contains all vertices of $G$ that are $2k$-edge-connected to at least one of its vertices (and therefore to all of them). Denote by $X\subseteq G $ a reducer of $G$ and by $\mathcal{D}=\{D_i\}_{i\in\mathcal{I}}$ an $\omega$-bond-faithful decomposition of $G$ given by Theorem \ref{laviolette}. By Proposition \ref{decom}, it follows that $S\cap V(D_i)$ is $2k$-edge-connected in $D_i$ for each $i\in\mathcal{I}$.

We will make some modifications to this $\omega$-bond-faithful decomposition. We prove some claims.

\begin{claim}
	\label{claim1}
	Let $G$ be a rayless graph of rank $1$ and $\mathcal{C}=\{C_j\}_{j\in\mathcal{J}}$ be the family of connected components of $G-X$, where $X$ is a reducer. We can assume that for every $j\in\mathcal{J}$ there is $i\in\mathcal{I}$ such that $C_j\subseteq D_i$ and $E(C_j,X)\subseteq D_i$, where $E(C_j,X)$ denotes the set of edges between $C_j$ and $X$. 
\end{claim} 

\begin{proof}

	Since $G$ has rank $1$, it follows that $C_j$ and $E(C_j,X)$ are finite. Hence there exist fragments $D_1,D_2,\cdots, D_m\in\mathcal{D}$ of the $\omega$-bond-faithful  decomposition $\mathcal{D}$ that contain at least one edge of $C_j$ or $E(C_j, X)$. Consider $\widetilde{D}=\bigcup_{1\leq n\le m} D_n$.  Then $\widetilde{D}$ is countable.  Define  $$\widetilde{\mathcal{D}}=(\mathcal{D}\setminus \{D_1,D_2,\cdots, D_m\})\cup \{\widetilde{D}\}$$
	
	Observe that, for a fragment \(D\in\mathcal{D}\), the set \(V(D)\cap S\) need not be \(2k\)-edge-connected within \(D\). Whenever this occurs, we merge \(D\) with countable fragments of \(\mathcal{D}\) containing the additional edges needed to make \(V(D)\cap S\) \(2k\)-edge-connected in the resulting subgraph.
\end{proof}

\begin{claim}
	\label{claim4.2}
	We may assume that every vertex of $S$ lies in a fragment of $\mathcal{D}$ that contains infinitely many vertices of $S$.
\end{claim}

\begin{proof}
	Let $\Gamma_0$ be the graph whose vertices are the fragments of $\mathcal D$, with two fragments adjacent whenever they have a common vertex. Since $G$ is connected, $\Gamma_0$ is connected. By Zorn's lemma, choose a maximal family $\{\mathcal A_\lambda:\lambda\in\Lambda\}$ of pairwise disjoint countable connected subsets of $V(\Gamma_0)$ such that the union of the fragments in each $\mathcal A_\lambda$ contains infinitely many vertices of $S$. Such subsets exist: choose countably many distinct vertices of $S$ and join fragments containing them by finite paths in $\Gamma_0$.
	
	Let $\mathcal R=\mathcal D\setminus\bigcup_{\lambda\in\Lambda}\mathcal A_\lambda$. By maximality, each component $C$ of $\Gamma_0[\mathcal R]$ contains only finitely many vertices of $S$ in the union of its fragments; otherwise, the same construction inside $C$ would give another member of the family. Every component $C$ containing a vertex of $S$ is adjacent to some $\mathcal A_\lambda$. For a fixed $\lambda$, only countably many such components are adjacent to $\mathcal A_\lambda$, since the union of the fragments in $\mathcal A_\lambda$ is countable and distinct components meet it in distinct vertices. For each such $C$, add to one adjacent $\mathcal A_\lambda$ finitely many fragments of $C$ which connect it to fragments containing all the vertices of $S$ occurring in $C$.
	
	The enlarged families are still pairwise disjoint, countable and connected. Merge the fragments in each of them and leave all remaining fragments unchanged. Every vertex of $S$ now lies in one of the merged fragments, and each such fragment contains infinitely many vertices of $S$. The resulting decomposition is still bond-faithful. Indeed, every finite cut of $G$ remains contained in a fragment. If $B$ is a finite bond of a merged fragment $H$ and $D$ is an old fragment containing an edge of $B$, then $B\cap E(D)$ is a nonempty finite cut of $D$ and contains a bond $B_D$ of $D$. Since $B_D$ is a bond of $G$, it is also a cut of $H$; as $B_D\subseteq B$ and $B$ is a bond of $H$, we have $B_D=B$.
\end{proof}

Write
$
\mathcal D_S=\{D\in\mathcal D:S\cap V(D)\text{ is infinite}\}.
$
By Claim~\ref{claim4.2}, the vertex sets of the fragments in $\mathcal D_S$ cover $S$.

\begin{claim}
	\label{clsep}
	If a vertex $v\in V(G)$ cannot be separated from $S$ by a finite set of edges, then $v\in S$.
\end{claim}

\begin{proof}
	Let $E\subseteq E(G)$ have fewer than $2k$ edges. By hypothesis, there exists $u\in S$ joined to $v$ in $G-E$. Since $S$ is $2k$-edge-connected, all vertices of $S$ lie in the same component of $G-E$. Thus, $v$ is $2k$-edge-connected to every vertex of $S$. By the choice of the enlarged set $S$, we have $v\in S$.
\end{proof}

\begin{claim}
	\label{clred}
	If $S$ intersects infinitely many components of $G-X$, then $X$ contains a vertex of $S$.
\end{claim}

\begin{proof}
	For each component of $G-X$ that intersects $S$, choose one vertex of $S$ contained in that component. Denote the resulting set of chosen vertices by $S'$. By hypothesis, $S'$ is infinite. Since $X$ is finite, there exists a vertex $x\in X$ that cannot be separated from $S'$ by any finite set of edges. Indeed, otherwise the union of finite sets separating each vertex of $X$ from $S'$ would be finite, but some component of $G-X$ containing a vertex of $S'$ would avoid this finite set and would still be joined to $X$.

	Since $S'\subseteq S$, the vertex $x$ cannot be separated from $S$ by a finite set of edges. Claim~\ref{clsep} gives $x\in S$. Hence $x\in X\cap S$.
\end{proof}

\textbf{Rank 1 case:} Suppose that $G$ is a rayless graph of rank $1$ and $S$ is uncountable. Then infinitely many components of $G-X$ are intersected by $S$. By Claim~\ref{clred}, the reducer $X$ contains a vertex of $S$. We show that we may assume that some fragment of the decomposition $\mathcal{D}$ contains $V(X)\cap S$. Consider $V(X)\cap S = \{v_0, v_1, \dots, v_m\}$ and denote by $D_0$ the fragment of the decomposition such that $v_0 \in V(D_0)$. For each $1 \le n \le m$, since $G$ is connected,  there is a path $P_n$ from $v_0$ to $v_n$. Since each $P_n$ is finite, it follows that there are fragments of the decomposition $D_0, D_1, \dots, D_m\in\mathcal{D}$ such that
$
\bigcup_{n=1}^m E(P_n) \subseteq \bigcup_{j=0}^m E( D_j )
$.
Consider $\tilde D = D_0 \cup D_1 \cup \dots \cup D_m$ and  define a new decomposition  by 
$
\mathcal{D}' = (\mathcal{D} \setminus \{D_0,\dots,D_m\}) \cup \{\tilde D\}
$.
Notice that it may happen that a subset of $S$ in a fragment of this new decomposition is not $2k$-edge-connected in the fragment. If this occurs, we further merge fragments until the required $2k$-edge-connectivity is achieved, as in the proof of Claim \ref{claim1}. 

After this modification, we keep the notation $\mathcal D_S$ for the fragments that contain infinitely many vertices of $S$; their vertex sets still cover $S$. Since $G$ has rank $1$, every component of $G-X$ is finite. Hence every $D_i\in\mathcal D_S$ intersects infinitely many components of $G-X$ in vertices of $S$. The same argument used in Claim~\ref{clred}, now inside $D_i$ and with the finite set $V(D_i)\cap X$, gives
$
V(D_i)\cap X\cap S\neq\varnothing.
$
Therefore, for every $D_i\in\mathcal D_S$,
$
V(D_i) \cap V(\tilde{D}) \cap X\cap S\neq  \varnothing.
$
Let $\mathcal D^*=\mathcal D_S\cup\{\tilde D\}$. For each $D_i\in\mathcal D^*$ let $H_1^i,H_2^i,\cdots, H_k^i$ be $k$ pairwise edge-disjoint $F$-limits of finite trees in $D_i$ given by Theorem \ref{counta}. For each \(1\leq j\leq k\) and $D_i\in\mathcal D^*$, let \(H_j^i\) be the \(F\)-limit of the sequence
$
\bigl\langle T_\ell^{(j,i)}\bigr\rangle_{\ell\in\mathbb{N}}.
$
For every $D_i\in\mathcal D_S$, choose $v_0^i\in V(D_i)\cap V(\tilde D)\cap X\cap S$ and fix an enumeration
\[
S_i=S\cap V(D_i)=\{v_m^i:m\in\mathbb{N}\}
\]
For $\tilde D$, fix an enumeration of $S\cap V(\tilde D)$. Define
$
S_{i,n}=\{v_m^i:m\leq n\}
$
for $D_i\in\mathcal D_S$, while for $D_i=\tilde D$ let $S_{i,n}$ also contain the finite set $X\cap S$. Since \(S_{i,n}\) is finite, there exists a tree \(T_{\ell(i,j,n)}^{(j,i)}\) in the sequence \(\langle T_\ell^{(j,i)}\rangle_{\ell\in\mathbb{N}}\) such that
$
S_{i,n}\subseteq V\bigl(T_{\ell(i,j,n)}^{(j,i)}\bigr).
$

For fixed \(n\in\mathbb{N}\) and \(1\leq j\leq k\), consider the union
\[
U_n^j
=
\bigcup_{D_i\in\mathcal D^*}
T_{\ell(i,j,n)}^{(j,i)}
\]

This union is a connected subgraph, since the tree contained in $\tilde D$ contains $X\cap S$ and every other tree contains its chosen vertex $v_0^i\in V(D_i)\cap V(\tilde D)\cap X\cap S$. Let \(\widetilde{T}_n^j\) be a spanning tree of \(U_n^j\). Since a spanning tree has the same vertex set as the subgraph it spans, we conclude that
$
v_m^i\in V(\widetilde{T}_n^j)
$
for every $D_i\in\mathcal D_S$ and \(m\leq n\) (see Figure \ref{limtree}). Denote by $\widetilde{H}_j$ the $F$-limit of $\langle \widetilde{T}_n^j\rangle_n$. Since the fragments form an edge decomposition and the $F$-limits inside each fragment are pairwise edge-disjoint, it follows that $\widetilde{H}_1, \widetilde{H}_2, \cdots, \widetilde{H}_k$ are also pairwise edge-disjoint and contain $S$ by construction.

\begin{figure}[htbp]
	\centering
	
	\definecolor{treered}{RGB}{220,20,45}
	
	\begin{tikzpicture}[
		x=1cm,
		y=1cm,
		fragment/.style={
			draw=black,
			line width=0.6pt,
			line cap=round,
			line join=round
		},
		tree/.style={
			draw=treered,
			line width=1.3pt,
			line cap=round,
			line join=round
		},
		vertex/.style={
			circle,
			fill=black,
			inner sep=2.5pt
		}
		]
		
		\draw[fragment]
		(3.20,4.00)
		.. controls (3.75,4.45) and (5.10,4.30) .. (6.05,4.15)
		.. controls (7.10,4.35) and (8.35,4.45) .. (8.65,3.65)
		.. controls (8.90,2.90) and (8.45,1.25) .. (7.65,0.75)
		.. controls (6.80,0.20) and (5.15,0.05) .. (4.55,0.80)
		.. controls (3.85,1.65) and (3.05,3.20) .. cycle;
		
		\draw[fragment]
		(4.05,3.75)
		.. controls (3.40,3.75) and (2.75,4.10) .. (2.25,4.75)
		.. controls (1.85,5.30) and (1.15,6.10) .. (1.60,6.55)
		.. controls (2.25,7.15) and (3.60,6.70) .. (4.20,6.00)
		.. controls (4.65,5.45) and (5.25,4.35) .. cycle;
		
		\draw[fragment]
		(5.95,3.90)
		.. controls (5.35,4.15) and (4.75,5.15) .. (5.10,5.90)
		.. controls (5.45,6.70) and (5.75,7.35) .. (6.55,7.35)
		.. controls (7.20,7.35) and (7.35,6.50) .. (7.85,5.95)
		.. controls (8.15,5.55) and (7.70,4.70) .. (7.15,4.05)
		.. controls (6.80,3.65) and (6.25,3.55) .. cycle;
		
		\draw[fragment]
		(7.75,3.90)
		.. controls (7.20,4.15) and (6.65,4.85) .. (6.95,5.60)
		.. controls (7.25,6.35) and (7.70,6.85) .. (8.45,7.30)
		.. controls (9.10,7.70) and (9.30,6.45) .. (9.75,6.10)
		.. controls (10.25,5.70) and (9.70,4.60) .. (9.15,4.10)
		.. controls (8.75,3.75) and (8.20,3.65) .. cycle;
		
		\node at (2.15,7.25) {$D_i$};
		\node at (6.15,7.85) {$D_j$};
		\node at (9.55,7.55) {$D_\ell$};
		
		\draw[tree]
		(4.05,3.90) -- (3.05,4.55) -- (2.45,4.95);
		
		\draw[tree]
		(3.05,4.55) -- (2.80,5.35);
		
		\draw[tree]
		(3.05,4.55) -- (3.70,5.15) -- (3.35,5.55);
		
		\draw[tree]
		(6.05,4.15) -- (6.05,4.85) -- (5.60,5.55);
		
		\draw[tree]
		(6.05,4.85) -- (6.45,5.45);
		
		\draw[tree]
		(7.75,4.15) -- (7.65,5.35) -- (8.45,5.20);
		
		\draw[tree]
		(8.45,5.20) -- (8.05,5.75);
		
		\draw[tree]
		(8.45,5.20) -- (8.85,5.65);
		
		\coordinate (c) at (6.15,2.85);
		
		\draw[tree] (4.05,3.90) -- (c);
		\draw[tree] (6.05,4.15) -- (c);
		\draw[tree] (7.75,4.15) -- (c);
		\draw[tree] (c) -- (6.15,1.80);
		
		\node[vertex] at (4.05,3.90) {};
		\node[vertex] at (6.05,4.15) {};
		\node[vertex] at (7.75,4.15) {};
		\node[vertex,fill=treered] at (6.15,1.80) {};
		
		\node at (3.90,3.50) {$v_0^i$};
		\node at (6.45,3.95) {$v_0^j$};
		\node at (8.09,3.93) {$v_0^\ell$};
		\node at (4.75,2.10) {$\widetilde{D}$};
		\node at (6.15,1.43) {$v_0$};
		
		\node[text=treered] at (7.35,2.15)
		{$\widetilde{T}_n^j$};
		
	\end{tikzpicture}
	
	\caption{The tree \(\widetilde{T}_n^j\) obtained from the union of
		trees contained in the fragments of the decomposition.}
	\label{limtree}
\end{figure}

\textbf{Rank 2 case:} To clarify the arguments that will be used in the
general case, we also present the proof for rayless graphs of rank $2$.
Suppose that $G$ is a rayless graph of rank $2$ and let $X$ be a
reducer of $G$. Then every connected component of $G-X$ has rank at
most $1$.

Consider a connected component $C$ of $G-X$. Since $C$ has rank at
most $1$, it has a finite reducer $R_C$. If $C$ contains infinitely
many vertices of $S$, then, since $R_C$ is finite and every component
of $C-R_C$ is finite, the set $S$ intersects infinitely many components
of $C-R_C$. As in the proof of Claim~\ref{clred}, there exists a vertex $x\in R_C$ that cannot be separated from $S\cap V(C)$ by any finite set of edges of $C$. Hence $x$ cannot be separated from $S$ by any finite set of edges of $G$. By Claim~\ref{clsep}, we have $x\in S$. Thus, every reducer of a component
of $G-X$ containing infinitely many vertices of $S$ contains a vertex
of $S$.

Now consider a connected component $C$ of $G-X$ containing only
finitely many vertices of $S$, say $S\cap V(C)=\{u_0,u_1,\ldots,u_l\}$.
For each $i\in\{0,\ldots,l\}$, let $D_i$ be a fragment of the
decomposition $\mathcal D$ containing $u_i$. Since $C$ is connected,
for each $i$ there is a finite path in $C$ joining $u_0$ to $u_i$.
Hence, there are fragments $D_0,D_1,\ldots,D_m\in\mathcal D$ containing
all the edges of these paths. As in the proof of Claim~\ref{claim1}, we merge
these fragments and obtain a countable fragment $\widetilde D$ such
that all vertices of $S\cap V(C)$ belong to $\widetilde D$. We may
then modify the decomposition by replacing $D_0,\ldots,D_m$ with
$\widetilde D$. If necessary, we further merge fragments so that the
set of vertices of $S$ contained in each fragment is $2k$-edge-connected
inside that fragment, as in the proof of Claim~\ref{claim1}.

By Claim~\ref{claim4.2}, we may assume that every vertex of $S$ lies in a fragment of $\mathcal D$ containing infinitely many vertices of $S$. Moreover, for
every reducer $R$ of $G$ or of a connected component of $G-X$ which
contains vertices of $S$, all vertices of $S\cap V(R)$ are contained
in a fragment of $\mathcal D$. Denote such a fragment by
$D_R$.

For each fragment $D\in\mathcal D$, let $S_D=S\cap V(D)$. By
Theorem~\ref{counta}, there exist $k$ pairwise edge-disjoint $F$-limits of
finite trees in $D$, say $H_D^1,H_D^2,\ldots,H_D^k$, such that
$S_D\subseteq V(H_D^j)$ for every $1\leq j\leq k$.

The fragments need not all contain a vertex of $S\cap X$. However,
a fragment containing a vertex of $S$ that lies in a reducer $R_C$ of
a component $C$ of $G-X$ can be joined to the fragment $D_X$ associated
with $X$ through the fragment $D_{R_C}$ associated with the reducer
$R_C$. Thus, starting from any fragment containing vertices of $S$ and
following the corresponding reducers, we obtain a chain of fragments
leading to a fragment associated with $X$ whenever $S\cap V(X)\neq
\varnothing$.

Fix $n\in\mathbb N$ and $1\leq j\leq k$. For every fragment
$D\in\mathcal D$, enumerate $S_D=\{v_m^D:m\in\mathbb N\}$ whenever
$S_D$ is infinite, and define $S_{D,n}=\{v_m^D:m\leq n\}$. If $S_D$ is finite, put $S_{D,n}=S_D$. Since
$S_{D,n}$ is finite, there exists a tree $T_{D,\ell(D,j,n)}^j$ in the
sequence defining $H_D^j$ such that $S_{D,n}\subseteq
V(T_{D,\ell(D,j,n)}^j)$.

For fixed $n\in\mathbb N$ and $1\leq j\leq k$, take the union of all
trees $T_{D,\ell(D,j,n)}^j$ and of the finite connecting paths
corresponding to the chains of fragments described above. Denote this
union by $U_n^j$. By construction, $U_n^j$ is connected and contains
the first $n$ vertices of $S$ lying in every fragment. Let
$\widetilde T_n^j$ be a spanning tree of $U_n^j$.

Since a spanning tree has the same vertex set as the subgraph it
spans, every vertex of $S$ eventually belongs to
$V(\widetilde T_n^j)$. Let $\widetilde H^j$ be the $F$-limit of
$\langle\widetilde T_n^j\rangle_{n\in\mathbb N}$. It follows that
$S\subseteq V(\widetilde H^j)$.

Finally, the trees used inside distinct fragments are pairwise
edge-disjoint for different values of $j$, and the connecting paths
can also be chosen inside the corresponding trees. Therefore
$\widetilde H^1,\widetilde H^2,\ldots,\widetilde H^k$ are pairwise
edge-disjoint. Hence $G$ contains $k$ pairwise edge-disjoint
$F$-limits of trees which contain $S$.

\textbf{General case:} We construct finite chains of fragments joining each fragment of $\mathcal D_S$ to a fixed fragment containing $V(X)\cap S$. Consecutive fragments in each chain will have a common vertex in $S$. These chains will allow us to join the trees obtained inside the fragments.

Let $G$ be a rayless graph of rank $\alpha$ and let $X$ be a reducer of $G$. By enlarging $X$ as described below, we may assume that $S$ intersects infinitely many components of $G-X$. Let $\mathcal{D}$ be an $\omega$-bond-faithful decomposition of $G$ given by Theorem \ref{laviolette}. For each connected  component $C_1^i$ of $G - X$, denote by $X_1^i$ its reducer. Similarly, for each component $C_2^i$ of $(G - X) - \bigcup_j X_1^j$, denote by $X_2^i$ its reducer. Continuing this process for an ordinal $\kappa < \alpha$ and for each connected component $C_\kappa^i$ of $
(G - X) - \bigcup_j \left( \bigcup_{0 < \ell < \kappa} X_\ell^j \right)
$
we denote by $X_\kappa^i$ its reducer (see Figure \ref{comprayless}).

\begin{figure}[ht]
	
	\centering
	\tikzset{every picture/.style={line width=0.75pt}} 
	
	\begin{tikzpicture}[x=0.75pt,y=0.75pt,yscale=-1,xscale=1]
		
		\draw  [color={rgb, 255:red, 208; green, 2; blue, 27 }  ,draw opacity=1 ] (253.68,48.09) .. controls (253.68,31.88) and (266.82,18.74) .. (283.03,18.74) -- (371.09,18.74) .. controls (387.3,18.74) and (400.44,31.88) .. (400.44,48.09) -- (400.44,165.59) .. controls (400.44,181.8) and (387.3,194.94) .. (371.09,194.94) -- (283.03,194.94) .. controls (266.82,194.94) and (253.68,181.8) .. (253.68,165.59) -- cycle ;
		\draw   (288.29,223.99) .. controls (288.29,219.98) and (291.53,216.73) .. (295.54,216.73) -- (365.61,216.73) .. controls (369.61,216.73) and (372.86,219.98) .. (372.86,223.99) -- (372.86,245.75) .. controls (372.86,249.75) and (369.61,253) .. (365.61,253) -- (295.54,253) .. controls (291.53,253) and (288.29,249.75) .. (288.29,245.75) -- cycle ;
		\draw   (424.7,40.22) .. controls (424.7,27) and (435.42,16.29) .. (448.63,16.29) -- (520.41,16.29) .. controls (533.62,16.29) and (544.33,27) .. (544.33,40.22) -- (544.33,171.02) .. controls (544.33,184.23) and (533.62,194.94) .. (520.41,194.94) -- (448.63,194.94) .. controls (435.42,194.94) and (424.7,184.23) .. (424.7,171.02) -- cycle ;
		\draw   (111.17,41.73) .. controls (111.17,29.03) and (121.46,18.74) .. (134.16,18.74) -- (203.12,18.74) .. controls (215.81,18.74) and (226.11,29.03) .. (226.11,41.73) -- (226.11,170.4) .. controls (226.11,183.1) and (215.81,193.39) .. (203.12,193.39) -- (134.16,193.39) .. controls (121.46,193.39) and (111.17,183.1) .. (111.17,170.4) -- cycle ;
		\draw   (257.57,86.76) .. controls (257.57,82.84) and (260.74,79.67) .. (264.66,79.67) -- (285.91,79.67) .. controls (289.83,79.67) and (293,82.84) .. (293,86.76) -- (293,139.36) .. controls (293,143.27) and (289.83,146.44) .. (285.91,146.44) -- (264.66,146.44) .. controls (260.74,146.44) and (257.57,143.27) .. (257.57,139.36) -- cycle ;
		\draw  [color={rgb, 255:red, 74; green, 144; blue, 226 }  ,draw opacity=1 ] (308.6,41.43) .. controls (308.6,33.34) and (315.16,26.78) .. (323.25,26.78) -- (379.72,26.78) .. controls (387.81,26.78) and (394.37,33.34) .. (394.37,41.43) -- (394.37,85.38) .. controls (394.37,93.47) and (387.81,100.03) .. (379.72,100.03) -- (323.25,100.03) .. controls (315.16,100.03) and (308.6,93.47) .. (308.6,85.38) -- cycle ;
		\draw   (307.3,132.83) .. controls (307.3,124.94) and (313.69,118.54) .. (321.58,118.54) -- (378.79,118.54) .. controls (386.68,118.54) and (393.07,124.94) .. (393.07,132.83) -- (393.07,175.67) .. controls (393.07,183.56) and (386.68,189.95) .. (378.79,189.95) -- (321.58,189.95) .. controls (313.69,189.95) and (307.3,183.56) .. (307.3,175.67) -- cycle ;
		\draw   (335.55,33.73) .. controls (335.55,31.13) and (337.66,29.03) .. (340.26,29.03) -- (366.27,29.03) .. controls (368.87,29.03) and (370.98,31.13) .. (370.98,33.73) -- (370.98,47.85) .. controls (370.98,50.45) and (368.87,52.56) .. (366.27,52.56) -- (340.26,52.56) .. controls (337.66,52.56) and (335.55,50.45) .. (335.55,47.85) -- cycle ;
		\draw   (360.58,63.01) .. controls (360.58,60.09) and (362.95,57.72) .. (365.88,57.72) -- (381.76,57.72) .. controls (384.69,57.72) and (387.06,60.09) .. (387.06,63.01) -- (387.06,94.74) .. controls (387.06,97.66) and (384.69,100.03) .. (381.76,100.03) -- (365.88,100.03) .. controls (362.95,100.03) and (360.58,97.66) .. (360.58,94.74) -- cycle ;
		\draw   (317.7,63.01) .. controls (317.7,60.09) and (320.07,57.72) .. (323,57.72) -- (338.88,57.72) .. controls (341.81,57.72) and (344.18,60.09) .. (344.18,63.01) -- (344.18,94.74) .. controls (344.18,97.66) and (341.81,100.03) .. (338.88,100.03) -- (323,100.03) .. controls (320.07,100.03) and (317.7,97.66) .. (317.7,94.74) -- cycle ;
		
		\draw (266.25,102.66) node [anchor=north west][inner sep=0.75pt]  [xscale=0.75,yscale=0.75]  {$X_{n}^{i}$};
		\draw (340.38,32.04) node [anchor=north west][inner sep=0.75pt]  [font=\footnotesize,xscale=0.75,yscale=0.75]  {$X_{n+1}^{i}$};
		\draw (230.34,6.27) node [anchor=north west][inner sep=0.75pt]  [font=\large,color={rgb, 255:red, 208; green, 2; blue, 27 }  ,opacity=1 ,xscale=0.75,yscale=0.75]  {$C_{n}^{i}$};
		\draw (276.02,26.41) node [anchor=north west][inner sep=0.75pt]  [font=\small,color={rgb, 255:red, 74; green, 144; blue, 226 }  ,opacity=1 ,xscale=0.75,yscale=0.75]  {$C_{n+1}^{i}$};
		\draw (86.69,12.52) node [anchor=north west][inner sep=0.75pt]  [xscale=0.75,yscale=0.75]  {$C_{n}^{j}$};
		\draw (405.1,4.74) node [anchor=north west][inner sep=0.75pt]  [xscale=0.75,yscale=0.75]  {$C_{n}^{p}$};

	\end{tikzpicture}

	\caption{The structure of $C_n^i$ and $X_n^i$ in the rayless graph.}
	\label{comprayless}
\end{figure}

Recall that $\mathcal D_S$ denotes the fragments containing infinitely many vertices of $S$, and that their vertex sets cover $S$ by Claim \ref{claim4.2}. 
For a reducer $ X_\kappa^i \subseteq C_\kappa^i$  we may further assume that either $ C_\kappa^i - X_\kappa^i $ contains only finitely many vertices of $ S$, or that 
$S $ intersects infinitely many components of $ C_\kappa^i - X_\kappa^i$. Indeed, if $C_\kappa^i - X_\kappa^i$ has only finitely many components intersecting $S$ and contains infinitely many vertices of $S$, take the union of the reducers of these components with $X_\kappa^i$ and consider this union as a new reducer. In this process, the number of connected components of $C_{\kappa}^i-X_{\kappa}^i$ increases. Since the connected components have a rank, this process must terminate after finitely many steps. Consequently, in the resulting decomposition, infinitely many connected components of $C_{\kappa}^i-X_{\kappa}^i$ are intersected by $S$. Moreover, if $C_\kappa^i-X_\kappa^i$ has infinitely many components that intersect $S$, then, as in the proof of Claim~\ref{clred}, some vertex of $X_\kappa^i$ cannot be separated from $S\cap V(C_\kappa^i)$ by any finite set of edges of $C_\kappa^i$. Claim~\ref{clsep} therefore gives $X_\kappa^i\cap S\neq\emptyset$.
If $C_\kappa^i - X_\kappa^i$ contains only finitely many vertices of $S$, we may assume that there exists $D\in\mathcal{D}$ containing all these vertices.

Let $D\in\mathcal D_S$. There exists a reducer $X_\ell^j\subseteq C_\ell^j$ such that infinitely many components of $C_\ell^j-X_\ell^j$ contain vertices of $S\cap V(D)$. Otherwise, at each step, only finitely many connected components would meet $S\cap V(D)$, and removing their reducers would strictly decrease the largest rank among these components. Since there is no infinite strictly decreasing sequence of ordinals, this process would terminate after finitely many steps. As each reducer is finite, this would imply that $S\cap V(D)$ is finite, a contradiction. Applying the argument from Claim~\ref{clred} inside the connected fragment $D$, followed by Claim~\ref{clsep}, gives a reducer meeting $V(D)\cap S$. Choose such a reducer $X_\kappa^i$ with the smallest possible index $\kappa$. Thus,
$
X_\kappa^i\cap V(D)\cap S\neq\emptyset.
$

Consider a fragment $D_{n_0} \in \mathcal{D}$ whose vertex set contains $V(X) \cap S$.	For each $D \in \mathcal{D}$ that intersects infinitely many connected components of $G-X$ in vertices of $ S$ we obtain that
$
V(D) \cap V(D_{n_0}) \cap V(X)\cap S \neq \emptyset
$. Let  $X_\kappa^i \subseteq C_\kappa^i$ be a reducer such that $C_\kappa^i - X_\kappa^i$ has infinitely many connected components intersecting $S$ and there exists $D \in \mathcal{D}$ for which $X_\kappa^i$ is the reducer of smallest index $\kappa$ satisfying $V(D) \cap X_\kappa^i \cap S \neq \emptyset$. We may assume that there exists $D_\kappa^i\in\mathcal{D}$ such that
$
X_\kappa^i \cap S \subseteq V(D_\kappa^i)
$.

Let $D\in\mathcal{D}_S$. There exists a reducer $X_\kappa^i\subseteq C_\kappa^i$ such that $X_\kappa^i$ is the reducer of smallest index $\kappa$ such that $V(D)\cap X_\kappa^i\cap S\neq \emptyset$. Consider $D_0=D_\kappa^i\in\mathcal{D}$ that contains $X_\kappa^i\cap S$ in its vertex set. Consider $D_1 \in \mathcal{D}$ such that
$
V(D_1) \cap X_\kappa^i \cap S \neq \emptyset$
and $
V(D_1) \cap X_{\kappa_1}^j \cap S \neq \emptyset
$
for some $\kappa_1 < \kappa$. Thus, we obtain $D_2 \in \mathcal{D}$ and $\kappa_2< \kappa_1$ such that
$D_2$ contains $X_{\kappa_1}^j \cap S$ in its vertex set and $V(D_2) \cap X_{\kappa_2}^j \cap S \neq \emptyset
$ (see Figure \ref{fig2}). Continuing this process, we obtain a strictly decreasing sequence of ordinals
$
\kappa > \kappa_1 > \kappa_2 > \cdots > \kappa_n > \cdots
$
which must be finite, that is, 
$
\kappa > \kappa_1 > \cdots > \kappa_m = 0
$
for some natural number $m\in\mathbb{N}$ such that $V(D_i)\cap V(D_{i-1})\cap S\neq\emptyset$ for $1\leq i\leq m$.

\begin{figure}[ht]
	
	\centering
	\tikzset{every picture/.style={line width=0.75pt}} 
	
	\begin{tikzpicture}[x=0.75pt,y=0.75pt,yscale=-1,xscale=1]
		
		\draw  [color={rgb, 255:red, 0; green, 0; blue, 0 }  ,draw opacity=1 ] (246.68,64.09) .. controls (246.68,47.88) and (259.82,34.74) .. (276.03,34.74) -- (364.09,34.74) .. controls (380.3,34.74) and (393.44,47.88) .. (393.44,64.09) -- (393.44,181.59) .. controls (393.44,197.8) and (380.3,210.94) .. (364.09,210.94) -- (276.03,210.94) .. controls (259.82,210.94) and (246.68,197.8) .. (246.68,181.59) -- cycle ;
		\draw   (281.29,239.99) .. controls (281.29,235.98) and (284.53,232.73) .. (288.54,232.73) -- (358.61,232.73) .. controls (362.61,232.73) and (365.86,235.98) .. (365.86,239.99) -- (365.86,261.75) .. controls (365.86,265.75) and (362.61,269) .. (358.61,269) -- (288.54,269) .. controls (284.53,269) and (281.29,265.75) .. (281.29,261.75) -- cycle ;
		\draw   (104.17,57.73) .. controls (104.17,45.03) and (114.46,34.74) .. (127.16,34.74) -- (196.12,34.74) .. controls (208.81,34.74) and (219.11,45.03) .. (219.11,57.73) -- (219.11,186.4) .. controls (219.11,199.1) and (208.81,209.39) .. (196.12,209.39) -- (127.16,209.39) .. controls (114.46,209.39) and (104.17,199.1) .. (104.17,186.4) -- cycle ;
		\draw   (250.57,102.76) .. controls (250.57,98.84) and (253.74,95.67) .. (257.66,95.67) -- (278.91,95.67) .. controls (282.83,95.67) and (286,98.84) .. (286,102.76) -- (286,155.36) .. controls (286,159.27) and (282.83,162.44) .. (278.91,162.44) -- (257.66,162.44) .. controls (253.74,162.44) and (250.57,159.27) .. (250.57,155.36) -- cycle ;
		\draw  [color={rgb, 255:red, 0; green, 0; blue, 0 }  ,draw opacity=1 ] (301.6,57.43) .. controls (301.6,49.34) and (308.16,42.78) .. (316.25,42.78) -- (372.72,42.78) .. controls (380.81,42.78) and (387.37,49.34) .. (387.37,57.43) -- (387.37,101.38) .. controls (387.37,109.47) and (380.81,116.03) .. (372.72,116.03) -- (316.25,116.03) .. controls (308.16,116.03) and (301.6,109.47) .. (301.6,101.38) -- cycle ;
		\draw   (300.3,148.83) .. controls (300.3,140.94) and (306.69,134.54) .. (314.58,134.54) -- (371.79,134.54) .. controls (379.68,134.54) and (386.07,140.94) .. (386.07,148.83) -- (386.07,191.67) .. controls (386.07,199.56) and (379.68,205.95) .. (371.79,205.95) -- (314.58,205.95) .. controls (306.69,205.95) and (300.3,199.56) .. (300.3,191.67) -- cycle ;
		\draw   (328.55,49.73) .. controls (328.55,47.13) and (330.66,45.03) .. (333.26,45.03) -- (359.27,45.03) .. controls (361.87,45.03) and (363.98,47.13) .. (363.98,49.73) -- (363.98,63.85) .. controls (363.98,66.45) and (361.87,68.56) .. (359.27,68.56) -- (333.26,68.56) .. controls (330.66,68.56) and (328.55,66.45) .. (328.55,63.85) -- cycle ;
		\draw   (353.58,79.01) .. controls (353.58,76.09) and (355.95,73.72) .. (358.88,73.72) -- (374.76,73.72) .. controls (377.69,73.72) and (380.06,76.09) .. (380.06,79.01) -- (380.06,110.74) .. controls (380.06,113.66) and (377.69,116.03) .. (374.76,116.03) -- (358.88,116.03) .. controls (355.95,116.03) and (353.58,113.66) .. (353.58,110.74) -- cycle ;
		\draw   (310.7,79.01) .. controls (310.7,76.09) and (313.07,73.72) .. (316,73.72) -- (331.88,73.72) .. controls (334.81,73.72) and (337.18,76.09) .. (337.18,79.01) -- (337.18,110.74) .. controls (337.18,113.66) and (334.81,116.03) .. (331.88,116.03) -- (316,116.03) .. controls (313.07,116.03) and (310.7,113.66) .. (310.7,110.74) -- cycle ;
		\draw  [color={rgb, 255:red, 208; green, 2; blue, 27 }  ,draw opacity=1 ][line width=2.25]  (288,59) .. controls (290.27,50.79) and (366.27,36.79) .. (346.27,56.79) .. controls (326.27,76.79) and (309.12,53.68) .. (320.06,122.84) .. controls (331,192) and (266,144) .. (256.27,116.79) .. controls (246.53,89.58) and (285.73,67.21) .. (288,59) -- cycle ;
		\draw  [color={rgb, 255:red, 74; green, 144; blue, 226 }  ,draw opacity=1 ][line width=2.25]  (353.58,50.74) .. controls (368,35) and (382,44) .. (379,64) .. controls (376,84) and (353,71) .. (382,86) .. controls (411,101) and (370,126) .. (350,96) .. controls (330,66) and (339.16,66.47) .. (353.58,50.74) -- cycle ;
		\draw  [color={rgb, 255:red, 0; green, 0; blue, 0 }  ,draw opacity=1 ] (428.68,61.75) .. controls (428.68,45.54) and (441.82,32.4) .. (458.03,32.4) -- (546.09,32.4) .. controls (562.3,32.4) and (575.44,45.54) .. (575.44,61.75) -- (575.44,179.25) .. controls (575.44,195.46) and (562.3,208.6) .. (546.09,208.6) -- (458.03,208.6) .. controls (441.82,208.6) and (428.68,195.46) .. (428.68,179.25) -- cycle ;
		\draw   (432.57,100.41) .. controls (432.57,96.5) and (435.74,93.33) .. (439.66,93.33) -- (460.91,93.33) .. controls (464.83,93.33) and (468,96.5) .. (468,100.41) -- (468,153.01) .. controls (468,156.93) and (464.83,160.1) .. (460.91,160.1) -- (439.66,160.1) .. controls (435.74,160.1) and (432.57,156.93) .. (432.57,153.01) -- cycle ;
		\draw  [color={rgb, 255:red, 0; green, 0; blue, 0 }  ,draw opacity=1 ] (483.6,55.08) .. controls (483.6,46.99) and (490.16,40.43) .. (498.25,40.43) -- (554.72,40.43) .. controls (562.81,40.43) and (569.37,46.99) .. (569.37,55.08) -- (569.37,99.04) .. controls (569.37,107.13) and (562.81,113.69) .. (554.72,113.69) -- (498.25,113.69) .. controls (490.16,113.69) and (483.6,107.13) .. (483.6,99.04) -- cycle ;
		\draw   (482.3,146.48) .. controls (482.3,138.6) and (488.69,132.2) .. (496.58,132.2) -- (553.79,132.2) .. controls (561.68,132.2) and (568.07,138.6) .. (568.07,146.48) -- (568.07,189.33) .. controls (568.07,197.22) and (561.68,203.61) .. (553.79,203.61) -- (496.58,203.61) .. controls (488.69,203.61) and (482.3,197.22) .. (482.3,189.33) -- cycle ;
		\draw   (510.55,47.39) .. controls (510.55,44.79) and (512.66,42.69) .. (515.26,42.69) -- (553.29,42.69) .. controls (555.89,42.69) and (558,44.79) .. (558,47.39) -- (558,61.51) .. controls (558,64.11) and (555.89,66.21) .. (553.29,66.21) -- (515.26,66.21) .. controls (512.66,66.21) and (510.55,64.11) .. (510.55,61.51) -- cycle ;
		\draw   (535.58,76.67) .. controls (535.58,73.74) and (537.95,71.37) .. (540.88,71.37) -- (556.76,71.37) .. controls (559.69,71.37) and (562.06,73.74) .. (562.06,76.67) -- (562.06,108.39) .. controls (562.06,111.32) and (559.69,113.69) .. (556.76,113.69) -- (540.88,113.69) .. controls (537.95,113.69) and (535.58,111.32) .. (535.58,108.39) -- cycle ;
		\draw   (492.7,76.67) .. controls (492.7,73.74) and (495.07,71.37) .. (498,71.37) -- (513.88,71.37) .. controls (516.81,71.37) and (519.18,73.74) .. (519.18,76.67) -- (519.18,108.39) .. controls (519.18,111.32) and (516.81,113.69) .. (513.88,113.69) -- (498,113.69) .. controls (495.07,113.69) and (492.7,111.32) .. (492.7,108.39) -- cycle ;
		\draw  [color={rgb, 255:red, 208; green, 2; blue, 27 }  ,draw opacity=1 ][line width=2.25]  (470,56.66) .. controls (472.27,48.45) and (548.27,34.45) .. (528.27,54.45) .. controls (508.27,74.45) and (491.12,51.34) .. (502.06,120.5) .. controls (513,189.66) and (448,141.66) .. (438.27,114.45) .. controls (428.53,87.24) and (467.73,64.87) .. (470,56.66) -- cycle ;
		\draw  [color={rgb, 255:red, 65; green, 117; blue, 5 }  ,draw opacity=1 ][line width=2.25]  (267,142) .. controls (287,132) and (302,124) .. (328,145) .. controls (354,166) and (368.21,206.47) .. (358.61,232.73) .. controls (349,259) and (299,254) .. (279,224) .. controls (259,194) and (247,152) .. (267,142) -- cycle ;
		\draw  [color={rgb, 255:red, 65; green, 117; blue, 5 }  ,draw opacity=1 ][line width=2.25]  (434,145) .. controls (454,135) and (513,119) .. (524,145) .. controls (535,171) and (548,183) .. (524,205) .. controls (500,227) and (368,281) .. (348,251) .. controls (328,221) and (414,155) .. (434,145) -- cycle ;
		\draw  [color={rgb, 255:red, 245; green, 166; blue, 35 }  ,draw opacity=1 ][line width=2.25]  (156,180) .. controls (176,170) and (291,203) .. (307,219) .. controls (323,235) and (340,252) .. (336,273) .. controls (332,294) and (300,298) .. (246,273) .. controls (192,248) and (136,190) .. (156,180) -- cycle ;
		
		\draw (264,106) node [anchor=north west][inner sep=0.75pt]  [xscale=0.75,yscale=0.75]  {$X_{1}^{i}$};
		\draw (350,53) node [anchor=north west][inner sep=0.75pt]  [font=\footnotesize,xscale=0.75,yscale=0.75]  {$X_{2}^{i}$};
		\draw (226.67,22.27) node [anchor=north west][inner sep=0.75pt]  [font=\large,color={rgb, 255:red, 0; green, 0; blue, 0 }  ,opacity=1 ,xscale=0.75,yscale=0.75]  {$C_{1}^{i}$};
		\draw (267.02,36.41) node [anchor=north west][inner sep=0.75pt]  [font=\small,color={rgb, 255:red, 0; green, 0; blue, 0 }  ,opacity=1 ,xscale=0.75,yscale=0.75]  {$C_{2}^{i}$};
		\draw (79.69,28.52) node [anchor=north west][inner sep=0.75pt]  [xscale=0.75,yscale=0.75]  {$C_{1}^{s}$};
		\draw (409.1,20.74) node [anchor=north west][inner sep=0.75pt]  [xscale=0.75,yscale=0.75]  {$C_{1}^{j}$};
		\draw (440.27,102.85) node [anchor=north west][inner sep=0.75pt]  [xscale=0.75,yscale=0.75]  {$X_{1}^{j}$};
		\draw (535.38,47.69) node [anchor=north west][inner sep=0.75pt]  [font=\footnotesize,xscale=0.75,yscale=0.75]  {$X_{2}^{j}$};
		\draw (449.02,34.07) node [anchor=north west][inner sep=0.75pt]  [font=\small,color={rgb, 255:red, 0; green, 0; blue, 0 }  ,opacity=1 ,xscale=0.75,yscale=0.75]  {$C_{2}^{j}$};
		\draw (195,258.4) node [anchor=north west][inner sep=0.75pt]  [color={rgb, 255:red, 245; green, 166; blue, 35 }  ,opacity=1 ,xscale=0.75,yscale=0.75]  {$D_{_{0}}$};
		\draw (259.66,165.84) node [anchor=north west][inner sep=0.75pt]  [color={rgb, 255:red, 65; green, 117; blue, 5 }  ,opacity=1 ,xscale=0.75,yscale=0.75]  {$D_{_{1}}$};
		\draw (402.44,190.99) node [anchor=north west][inner sep=0.75pt]  [color={rgb, 255:red, 65; green, 117; blue, 5 }  ,opacity=1 ,xscale=0.75,yscale=0.75]  {$D'_{_{1}}$};
		\draw (278.66,74.84) node [anchor=north west][inner sep=0.75pt]  [color={rgb, 255:red, 208; green, 2; blue, 27 }  ,opacity=1 ,xscale=0.75,yscale=0.75]  {$D_{_{2}}$};
		\draw (360,270) node [anchor=north west][inner sep=0.75pt]  [xscale=0.9,yscale=0.75]  {$X$};
		\draw (355.58,82.41) node [anchor=north west][inner sep=0.75pt]  [color={rgb, 255:red, 74; green, 144; blue, 226 }  ,opacity=1 ,xscale=0.75,yscale=0.75]  {$D_{_{3}}$};
		\draw (459.44,77.99) node [anchor=north west][inner sep=0.75pt]  [color={rgb, 255:red, 208; green, 2; blue, 27 }  ,opacity=1 ,xscale=0.75,yscale=0.75]  {$D'_{_{2}}$};

	\end{tikzpicture}
	
	\caption{The circular figures represent the graphs that form two chains.}
	\label{fig2}
\end{figure}



Let $\mathcal D^*$ consist of the fragments of $\mathcal D_S$, the fragment $D_{n_0}$, and all fragments that occur in the chains constructed above. Consider the auxiliary graph \(\Gamma\) defined by
$
V(\Gamma)
=
\mathcal D^*
$
and
\[
E(\Gamma)
=
\{DD':D,D'\in V(\Gamma),\ D\neq D',
\text{ and }V(D)\cap V(D')\cap S\neq\varnothing\}
\]
The purpose of \(\Gamma\) is to record how the constructions obtained
inside distinct fragments can be joined through vertices of \(S\). The
chains constructed above show that every vertex of \(\Gamma\) is
connected to \(D_{n_0}\). Hence, \(\Gamma\) is connected. Fix a spanning
tree \(\mathcal A\) of \(\Gamma\), rooted at \(D_{n_0}\), and, for every
edge \(DD'\in E(\mathcal A)\), choose a vertex
$
v_{DD'}\in V(D)\cap V(D')\cap S.
$ For each \(D\in V(\Gamma)\), put \(S_D=S\cap V(D)\). For every
\(1\leq j\leq k\), let \(H_D^j\) be the \(F\)-limit of a sequence
$
\langle T_{D,n}^j\rangle_n
$
of trees in \(D\), chosen as in Theorem~\ref{counta}, such that
$
S_D\subseteq V(H_D^j)
$
and \(H_D^1,\ldots,H_D^k\) are pairwise edge-disjoint.

For each \(n\in\mathbb N\) and \(1\leq j\leq k\), let \(U_n^j\) be the
connected component of
\[
\bigcup_{D\in V(\Gamma)}T_{D,n}^j
\]
that contains \(T_{D_{n_0},n}^j\), and let \(K_n^j\) be a spanning tree
of \(U_n^j\). Denote by \(K^j\) the \(F\)-limit of the sequence
$
\langle K_n^j\rangle_n.
$

\begin{claim}
	\label{cl1}
	\(S\subseteq V(K^j)\).
\end{claim}

\begin{proof} Let \(v\in S\). Choose
	\(D\in V(\Gamma)\) such that \(v\in S_D\). Let
	$
	D_{n_0}=D_0,D_1,\ldots,D_m=D
	$
	be the unique path in \(\mathcal A\) from \(D_{n_0}\) to \(D\), and put
	$
	v_i=v_{D_{i-1}D_i}
	$
	for every \(1\leq i\leq m\). Since \(v\in V(H_D^j)\), we have
	$
	\{n\in\mathbb N:v\in V(T_{D,n}^j)\}\in F.
	$
	Denote by \(A\) the intersection of the sets
	$
	\{n\in\mathbb N:v\in V(T_{D,n}^j)\}
	$,
	$
	\{n\in\mathbb N:v_i\in V(T_{D_{i-1},n}^j)\},
	$ and $
	\{n\in\mathbb N:v_i\in V(T_{D_i,n}^j)\},
	$
	for \(1\leq i\leq m\). Since this is a finite intersection of elements
	of \(F\), we have \(A\in F\). For every \(n\in A\), the trees
	corresponding to the fragments
	$
	D_0,D_1,\ldots,D_m
	$
	are successively joined through the vertices \(v_1,\ldots,v_m\).
	Moreover, \(v\in V(T_{D,n}^j)\). Hence, \(v\in V(U_n^j)\) and,
	since \(K_n^j\) is a spanning tree of \(U_n^j\), we also have
	\(v\in V(K_n^j)\). Therefore,
	$
	A\subseteq\{n\in\mathbb N:v\in V(K_n^j)\}.
	$
	Since \(A\in F\) and \(F\) is closed under supersets, it follows that
	$
	\{n\in\mathbb N:v\in V(K_n^j)\}\in F.
	$
	Thus, \(v\in V(K^j)\), proving that \(S\subseteq V(K^j)\).
\end{proof}
\begin{claim}
	\label{cl2}
	\(K^1,\ldots,K^k\) are pairwise
	edge-disjoint.
\end{claim}
\begin{proof}Suppose, to the contrary, that there exists an edge
	$
	e\in E(K^i)\cap E(K^j)
	$
	for some distinct \(1\leq i,j\leq k\). Let \(D\) be the unique
	fragment of \(\mathcal D\) containing \(e\). Since \(e\in E(K^i)\), it follows that
	$
	\{n\in\mathbb N:e\in E(K_n^i)\}\in F.
	$
	As \(K_n^i\) is contained in
	\(\bigcup_{D'\in V(\Gamma)}T_{D',n}^i\), and \(D\) is the unique
	fragment containing \(e\), we obtain
	$
	\{n\in\mathbb N:e\in E(T_{D,n}^i)\}\in F.
	$
	Consequently, \(e\in E(H_D^i)\). Analogously,
	\(e\in E(H_D^j)\), contradicting the fact that
	\(H_D^1,\ldots,H_D^k\) are pairwise edge-disjoint. 
\end{proof}
Therefore, by Claims~\ref{cl1} and~\ref{cl2},
\(K^1,\ldots,K^k\) are the required pairwise edge-disjoint
\(F\)-limits of trees containing \(S\).
\end{proof}

So far, we have only considered versions of Kriesell’s conjecture for infinite graphs in terms of topological $S$-Steiner trees and $F$-limits of trees which contain $S$. Our aim now is to show that if Kriesell’s conjecture holds for finite graphs, then it also holds for finitely edge-separable rayless graphs. To this end, we first require a lemma.

\begin{lemma}
	\label{lema}
	Let $G$ be a rayless finitely edge-separable graph. For every vertex $v\in V(G)$ of infinite degree, the graph $G-v$ has infinitely many components, each joined to $v$ by only finitely many edges.
\end{lemma}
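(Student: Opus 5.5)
We prove the two assertions separately, both using the fact that the edges at $v$ are infinitely many while $G$ has no ray.

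\emph{Each component of $G-v$ sends only finitely many edges to $v$.} Suppose some component $C$ of $G-v$ is joined to $v$ by infinitely many edges $vu_0, vu_1, \dots$ with distinct $u_i\in V(C)$. We build a ray in $C$ by a standard compactness argument. Since $C$ is connected, take a spanning tree $T_C$ of $C$ rooted at $u_0$. The subtree of $T_C$ spanned by the infinitely many vertices $u_i$ is infinite. A rayless infinite tree must contain a vertex of infinite degree (K\H{o}nig's lemma), so no ray comes for free, and we argue instead via edge-separability. As $G$ is finitely edge-separable, choose a finite edge set $Y$ separating $v$ from $u_0$. Every edge $vu_i\notin Y$ lies in the component of $v$ in $G-Y$, so all but finitely many $u_i$ lie in that component, while $u_0$ does not. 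We then try to show that $v$ and $u_0$ are in fact joined by infinitely many edge-disjoint paths, which contradicts finite edge-separability. The candidate paths are $vu_i$ followed by a $u_i$--$u_0$ path in $C$, and the task is to make them pairwise edge-disjoint.

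This is where raylessness enters, and we expect it to be the main obstacle. Consider the tree $T_C$ together with the infinitely many leaves $u_i$, and apply the star-comb lemma (Diestel, Lemma 8.2.2) to the set $U=\{u_i\}$ in the connected graph $C$. Because $C$ is rayless, no comb exists, so $C$ contains a subdivided star with infinitely many leaves in $U$. This star has a centre $c$ and internally disjoint $c$--$u_{i_j}$ paths. Appending the edges $vu_{i_j}$ produces infinitely many edge-disjoint (indeed internally disjoint) $v$--$c$ paths, contradicting finite edge-separability, since $c\neq v$ ($c\in C$). Hence each component sends only finitely many edges to $v$. This neat application of the star-comb lemma should in fact make the first paragraph's detour unnecessary, and we would write the proof directly this way.

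\emph{$G-v$ has infinitely many components.} Since $v$ has infinite degree, there are infinitely many edges at $v$. Each component of $G-v$ absorbs only finitely many of them, and every edge at $v$ goes to some component of $G-v$. Therefore infinitely many components are needed to absorb all these edges, and both assertions follow.
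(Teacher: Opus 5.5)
Your proof is correct and follows the paper's route: the star--comb lemma in the rayless component $C$ yields a subdivided infinite star with centre $c\in C$, whose legs extended by the edges $vu_{i_j}$ give infinitely many edge-disjoint $v$--$c$ paths, contradicting finite edge-separability; the infinitude of components then follows by distributing the infinitely many edges at $v$ among components. As you note yourself, drop the exploratory first paragraph (the separator $Y$ and the remark about rayless trees) from the final write-up, since it plays no role in the argument.
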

\begin{proof}
	Let $v\in V(G)$ be a vertex of infinite degree. Suppose that there exists a connected component $C$ of $G-v$ which is joined to $v$ by infinitely many edges, that is, \(v\) has infinitely many neighbors in \(C\). Let $A\subseteq V(C)$ be an infinite set of neighbors of $v$ in $C$. Since $G$ is rayless, it follows that $C$ is also rayless. Then there exists a subdivided star in $C$ with leaves in $A$ and center $u\in C$. Notice that the vertices $u$ and $v$ are infinitely edge-connected, which contradicts $G$ being finitely edge-separable.
	
	Suppose that $G - v$ has only finitely many connected components. Since $v$  has infinite degree, one of these components must be joined to $v$ by infinitely many edges. This contradicts the preceding argument.
\end{proof}

\begin{theorem}
	\label{quase}
	Let $G$ be a countable, connected, finitely edge-separable, rayless graph, and let $S\subseteq V(G)$ be a $2k$-edge-connected subset. If Conjecture \ref{kriesell} holds, then $G$ contains $k$ pairwise edge-disjoint $S$-Steiner trees.
\end{theorem}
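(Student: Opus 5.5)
We plan to argue by induction on the rank of $G$ (which exists by Schmidt's theorem, since $G$ is rayless), using Lemma~\ref{lema} to control vertices of infinite degree. The guiding idea: in a finitely edge-separable rayless graph, vertices of infinite degree behave like cut vertices whose sides are finite pieces attached by finitely many edges. Hence an $F$-limit of trees containing $S$, as in Theorem~\ref{counta}, should be a genuine tree once we cut the finitely many cycles arising inside each finite piece. Rather than taking limits, though, we will build the trees directly by a compactness-free, piece-by-piece procedure. We may assume $S$ is infinite, since otherwise the union of $2k$ edge-disjoint paths between pairs of $S$ is a finite graph and Conjecture~\ref{kriesell} applies directly.

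\textbf{Step 1 (structure).} Let $X$ be a finite reducer of $G$, and let $B$ be the set of vertices of $X$ of infinite degree. By Lemma~\ref{lema}, for every $v\in B$ each component of $G-v$ meets $v$ in finitely many edges. We also want each component $C$ of $G-X$ to send only finitely many edges to $X$. Suppose instead that some $C$ sends infinitely many edges to some $x\in X$. Since $C$ is rayless, we obtain a subdivided star in $C$ centred at some $u$ whose leaves are neighbours of $x$. This makes $u$ and $x$ infinitely edge-connected, contradicting finite edge-separability. So every $C$ is joined to $X$ by a finite edge set $E(C,X)$, and by induction on rank we may treat each $C\cup X$ separately.

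\textbf{Step 2 (finite cores).} Enumerate the components $C_0,C_1,\dots$ of $G-X$; there are countably many because $G$ is countable. Let $G_n=X\cup\bigcup_{m\le n}(C_m\cup E(C_m,X))$. Each $G_n$ has strictly smaller rank pieces attached to a finite set. Our aim is to show that $S\cap V(G_n)$ is $2k$-edge-connected in a suitable subgraph of $G_n$ to which the induction hypothesis, or Conjecture~\ref{kriesell} in the finite case, applies. A path in $G$ between two vertices of $G_n$ that leaves $G_n$ enters some $C_m$ with $m>n$ through $X$ and returns to $X$. We replace each such excursion by an edge (or a short path) inside a fixed finite gadget on $X$. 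To keep everything genuinely a subgraph, we instead enlarge $G_n$ by finitely many extra components realising all the needed $X$–$X$ connections. There are only finitely many pairs in $X$, each needing at most $2k$ paths, so this is possible.

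\textbf{Step 3 (compatibility).} This is the main obstacle. Trees obtained for different $n$ need not extend one another, and without $F$-limits we must force coherence. We would handle it by proving a stronger inductive statement: for any prescribed finite set $Y$ of vertices (here $Y=X\cap V$), and any family of $k$ edge-disjoint forests on $Y$ that is "realisable" in the sense of admitting a partition of connections, there exist $k$ edge-disjoint $S$-Steiner trees extending it. With this in hand, we construct the trees component by component. First fix $k$ edge-disjoint $S\cap X$-Steiner structures on a finite core containing $X$; there are only finitely many possible traces on the finite set $E(X)\cup X$, so we choose one by a pigeonhole/König-type argument over $n$. Then, independently in each component $C_m$, extend by $k$ edge-disjoint trees spanning $S\cap V(C_m)$ together with one attachment vertex in $X$. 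Each $C_m$ with its finite edge set to $X$ has smaller rank, so the induction hypothesis gives these trees. Finally, take the union over $m$. The union is acyclic because distinct $C_m$ meet only in $X$ and each attaches to the common core tree through a single connection. It is connected and contains $S$, and edge-disjointness holds because components are edge-disjoint. The delicate point, where we expect most work, is ensuring that each $C_m$, glued to $X$, still sees $S\cap V(C_m)$ as $2k$-edge-connected, together with an attachment vertex. For this we would use Claim~\ref{clred} and Claim~\ref{clsep} to guarantee $X\cap S\neq\varnothing$ when infinitely many components meet $S$.
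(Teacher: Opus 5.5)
\textbf{There is a genuine gap, in Step 3.} The problem already shows up in Step 1's ``treat each $C\cup X$ separately''.

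Suppose a component $C_m$ of $G-X$ has neighbours at two distinct vertices $x,y\in X$. Then the $2k$ edge-disjoint paths between two vertices of $S\cap V(C_m)$ may leave $C_m$ at $x$ and re-enter at $y$ after passing through other components. So $S\cap V(C_m)$, with or without an attachment vertex, need not be $2k$-edge-connected in $C_m$ glued to $X$. A tree covering $S\cap V(C_m)$ may then need several attachment vertices and connections routed outside $C_m$. Hence the $k$ trees cannot be chosen independently in each $C_m$, and your argument that the union is connected and acyclic via ``a single connection'' does not apply. Claims~\ref{clred} and~\ref{clsep} only give $X\cap S\neq\varnothing$, which does not address this routing problem.

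The proposed remedy does not work either. $G_n$ is not finite, since the $C_m$ may be infinite. Moreover, $G-X$ has infinitely many components, each sending finitely many edges to $X$, so $X$ contains vertices of infinite degree. There are therefore infinitely many possible traces on the edges at $X$, and no finite branching for a pigeonhole or K\"onig argument. More fundamentally, a compactness limit of finite solutions is exactly the kind of object (an $F$-limit) that can fail to be connected, which is the whole difficulty. Finally, the ``stronger inductive statement'' about realisable forests on $Y$ is never made precise or proved.

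\textbf{The missing idea} is that multi-attached components are harmless because there are only finitely many of them. If infinitely many components of $G-X$ had neighbours at two or more vertices of $X$, some pair $x,y\in X$ would be joined by infinitely many edge-disjoint paths, contradicting finite edge-separability. This is the same kind of argument as your Step 1. With this, your rank-induction framework goes through:
\begin{enumerate}
\item Put $X$ and these finitely many components into a core $G_0$. It has smaller rank than $G$: use $X$ together with the reducers of those components as a reducer.
\item Every other component $C$ meets $X$ in a single cut vertex $x_C$. No path between two vertices of $G_0$ can enter $C$, so $G_0$ is connected.
\item Let $S_0=(S\cap V(G_0))\cup\{x_C : C\cap S\neq\varnothing,\ S\not\subseteq V(C)\}$. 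It is $2k$-edge-connected in $G_0$, because $x_C$ lies on every path from $S\cap V(C)$ to $S\setminus V(C)$. If $S\subseteq V(C)$, work in $C+x_C$ alone.
\item For each such $C$, the set $(S\cap V(C))\cup\{x_C\}$ is $2k$-edge-connected in $C+x_C$: truncate the paths at $x_C$.
\item Apply the induction hypothesis to $G_0$ and to each $C+x_C$, then glue the $j$-th trees at the cut vertices $x_C$.
\end{enumerate}
This yields $k$ edge-disjoint $S$-Steiner trees with no compatibility issue, and it does not even use countability.

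The paper takes a different route. It builds the locally finite expanded graph $\widetilde G$, enumerating the edges at each infinite-degree vertex $v$ in consecutive blocks according to the components of $G-v$ (Lemma~\ref{lema}). It then shows that every end of $\widetilde G$ has degree one (Claim~\ref{cray2}). Consequently, the topological $\widetilde S$-Steiner trees supplied by Theorem~\ref{localfin} are genuine trees, and they are projected back to $G$.
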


\begin{proof}
Let $\widetilde{G}$ be the expanded graph of $G$. For each \(v\in B\), let
$
\langle C_n^v\rangle_n
$
be an enumeration of the connected components of \(G-v\) that are
adjacent to \(v\). By Lemma~\ref{lema}, the set \(E(v,C_n^v)\) is
finite for every \(n\in\mathbb{N}\). Since \(v\) has infinite degree,
there are infinitely many such components.

For every \(n\in\mathbb{N}\), set
$
d_n^v=|E(v,C_n^v)|,$ $p_0^v=0$ and $
p_{n+1}^v=p_n^v+d_n^v.
$
We choose the enumeration
$
E(v)=\{e_i^v:i\in\mathbb{N}\}
$
in such a way that
$
E(v,C_n^v)
=
\{e_i^v:p_n^v\leq i<p_{n+1}^v\}
$
for every \(n\in\mathbb{N}\). Thus, the edges joining \(v\) to the
same connected component of \(G-v\) receive consecutive indices, and
the blocks corresponding to the components \(C_n^v\) occur in
increasing order along \(r_v\). This requirement will be essential in the proof of
Claim~\ref{cray2}. Without this requirement, \(\widetilde{G}\) may contain
an end of degree at least \(2\). Indeed, let \(G\) consist of a vertex
\(u\) joined to every vertex of infinitely many pairwise disjoint copies
of \(K_4\). A suitable interlaced enumeration of the edges incident with
\(u\) produces two vertex-disjoint equivalent rays in \(\widetilde{G}\),
as illustrated in Figure~\ref{contra}.

\begin{figure}[htbp]
	\centering
	
	\tikzset{every picture/.style={line width=0.4pt}}
	
	\begin{tikzpicture}[
		x=0.6pt,
		y=0.6pt,
		yscale=-1,
		xscale=1
		]
		
		\foreach \i/\x in {3/115,4/180,5/245,6/310,7/375,8/440,9/505,10/570,11/635,12/700}{
			\coordinate (c\i) at (\x,190);
		}
		
		\foreach \i/\xa/\xb in {3/140/155,4/205/220,5/270/285,6/335/350,7/400/415,8/465/480,9/530/545,10/595/610,11/660/675}{
			\coordinate (w\i a) at (\xa,169);
			\coordinate (w\i b) at (\xb,169);
			\coordinate (w\i c) at (\xa,211);
			\coordinate (w\i d) at (\xb,211);
		}
		
		\foreach \i/\left/\right in {3/c3/c4,4/c4/c5,5/c5/c6,6/c6/c7,7/c7/c8,8/c8/c9,9/c9/c10,10/c10/c11,11/c11/c12}{
			\draw[black,line width=0.4pt] (\left)--(w\i a);
			\draw[black,line width=0.4pt] (\left)--(w\i b);
			\draw[black,line width=0.4pt] (\left)--(w\i c);
			\draw[black,line width=0.4pt] (\left)--(w\i d);
			
			\draw[black,line width=0.4pt] (\right)--(w\i a);
			\draw[black,line width=0.4pt] (\right)--(w\i b);
			\draw[black,line width=0.4pt] (\right)--(w\i c);
			\draw[black,line width=0.4pt] (\right)--(w\i d);
			
			\draw[black,line width=0.4pt] (w\i a)--(w\i b);
			\draw[black,line width=0.4pt] (w\i a)--(w\i c);
			\draw[black,line width=0.4pt] (w\i a)--(w\i d);
			\draw[black,line width=0.4pt] (w\i b)--(w\i c);
			\draw[black,line width=0.4pt] (w\i b)--(w\i d);
			\draw[black,line width=0.4pt] (w\i c)--(w\i d);
		}
		
		\coordinate (q0a) at (170,55);
		\coordinate (q0b) at (300,55);
		\coordinate (q0c) at (210,105);
		\coordinate (q0d) at (340,105);
		
		\coordinate (q1a) at (430,55);
		\coordinate (q1b) at (560,55);
		\coordinate (q1c) at (470,105);
		\coordinate (q1d) at (600,105);
		
		\draw[black,line width=0.4pt] (q0a)--(q0b);
		\draw[black,line width=0.4pt] (q0c)--(q0d);
		\draw[black,line width=0.4pt] (q0a)--(q0c);
		\draw[black,line width=0.4pt] (q0a)--(q0d);
		\draw[black,line width=0.4pt] (q0b)--(q0c);
		\draw[black,line width=0.4pt] (q0b)--(q0d);
		
		\draw[black,line width=0.4pt] (q1a)--(q1b);
		\draw[black,line width=0.4pt] (q1c)--(q1d);
		\draw[black,line width=0.4pt] (q1a)--(q1c);
		\draw[black,line width=0.4pt] (q1a)--(q1d);
		\draw[black,line width=0.4pt] (q1b)--(q1c);
		\draw[black,line width=0.4pt] (q1b)--(q1d);
		
		\draw[black,line width=0.4pt] (c3)--(q0a);
		\draw[black,line width=0.4pt] (q0b)--(c6);
		\draw[black,line width=0.4pt] (c5)--(q0c);
		
		\draw[black,line width=0.4pt]
		(q0d) .. controls (390,110) and (455,145) .. (c8);
		
		\draw[black,line width=0.4pt] (c7)--(q1a);
		\draw[black,line width=0.4pt] (q1b)--(c10);
		\draw[black,line width=0.4pt] (c9)--(q1c);
		
		\draw[black,line width=0.4pt]
		(q1d) .. controls (650,110) and (715,145) .. (c12);
		
		\draw[
		color={rgb,255:red,74;green,144;blue,226},
		line width=1.2pt
		]
		(c3)--(q0a)--(q0b)--(c6);
		
		\draw[
		color={rgb,255:red,74;green,144;blue,226},
		line width=1.2pt
		]
		(c6)--(w6a)--(c7);
		
		\draw[
		color={rgb,255:red,74;green,144;blue,226},
		line width=1.2pt
		]
		(c7)--(q1a)--(q1b)--(c10);
		
		\draw[
		color={rgb,255:red,74;green,144;blue,226},
		line width=1.2pt
		]
		(c10)--(w10a)--(c11);
		
		\draw[
		color={rgb,255:red,74;green,144;blue,226},
		line width=1.2pt,
		dashed
		]
		(c11) .. controls (665,135) and (690,105) .. (720,80);
		
		\draw[
		color={rgb,255:red,208;green,2;blue,27},
		line width=1.2pt
		]
		(c4)--(w4c)--(c5);
		
		\draw[
		color={rgb,255:red,208;green,2;blue,27},
		line width=1.2pt
		]
		(c5)--(q0c)--(q0d);
		
		\draw[
		color={rgb,255:red,208;green,2;blue,27},
		line width=1.2pt
		]
		(q0d) .. controls (390,110) and (455,145) .. (c8);
		
		\draw[
		color={rgb,255:red,208;green,2;blue,27},
		line width=1.2pt
		]
		(c8)--(w8c)--(c9);
		
		\draw[
		color={rgb,255:red,208;green,2;blue,27},
		line width=1.2pt
		]
		(c9)--(q1c)--(q1d);
		
		\draw[
		color={rgb,255:red,208;green,2;blue,27},
		line width=1.2pt
		]
		(q1d) .. controls (650,110) and (715,145) .. (c12);
		
		\draw[
		color={rgb,255:red,208;green,2;blue,27},
		line width=1.2pt,
		dashed
		]
		(c12) .. controls (720,215) and (730,245) .. (750,270);
		
		\foreach \i in {3,4,5,6,7,8,9,10,11,12}{
			\fill[black] (c\i) circle (2.5pt);
		}
		
		\foreach \i in {3,4,5,6,7,8,9,10,11}{
			\foreach \letter in {a,b,c,d}{
				\fill[black] (w\i\letter) circle (1.8pt);
			}
		}
		
		\foreach \x in {q0a,q0b,q0c,q0d,q1a,q1b,q1c,q1d}{
			\fill[black] (\x) circle (2.5pt);
		}
		
		\foreach \x in {c3,c6,c7,c10,c11,w6a,w10a,q0a,q0b,q1a,q1b}{
			\fill[
			color={rgb,255:red,74;green,144;blue,226}
			]
			(\x) circle (2.6pt);
		}
		
		\foreach \x in {c4,c5,c8,c9,c12,w4c,w8c,q0c,q0d,q1c,q1d}{
			\fill[
			color={rgb,255:red,208;green,2;blue,27}
			]
			(\x) circle (2.6pt);
		}
		
		\node[
		font=\small,
		color={rgb,255:red,74;green,144;blue,226}
		] at (235,65) {\(r\)};
		
		\node[
		font=\small,
		color={rgb,255:red,208;green,2;blue,27}
		] at (275,122) {\(r'\)};
		
	\end{tikzpicture}
	\vspace{-1cm}
	\caption{An end of degree two in an expanded graph.}
	\label{contra}
\end{figure}
	
\begin{claim}
	Every ray \(r\in\mathcal{R}(\widetilde{G})\) is equivalent to
	\(r_u\) for a unique vertex \(u\in V(G)\) of infinite degree.
	\label{cray}
\end{claim}

\begin{proof}
We first prove existence. Let \(r\) be a ray of \(\widetilde{G}\) and
suppose, towards a contradiction, that \(V(r)\cap V(r_u)\) is finite
for every vertex \(u\) of infinite degree. Write
\(r=(z_n)_{n\in\mathbb N}\), where the \(z_n\) are its vertices and the \(z_iz_{i+1}\) are its edges. Consider the subgraph
\(H=\rho^{-1}[r]\) of \(G\), the projection of \(r\) to \(G\) under the inverse edge correspondence \(\rho^{-1}\).

Let \(I\) be the set of all \(i\in\mathbb{N}\) such that
\(z_iz_{i+1}\in\rho[E(G)]\). We first show that \(I\) is infinite.
Suppose that \(I\) is finite. Choose \(N\in\mathbb{N}\) such that
\(i<N\) for every \(i\in I\). Then the tail
\((z_n)_{n\geq N}\) contains no edge of
\(\rho[E(G)]\).
The only edges of \(\widetilde{G}\) that do not belong to
\(\rho[E(G)]\) are edges belonging to some \(r_v\), where \(v\) has
infinite degree. Moreover, distinct \(2k\)-rays \(r_u\) and \(r_v\)
are connected only by edges belonging to \(\rho[E(G)]\). Therefore,
the tail \(( z_n)_{n\geq N}\) is contained in \(r_v\) for
some vertex \(v\) of infinite degree.
This contradicts the assumption that \(V(r)\cap V(r_v)\) is finite. Therefore \(I\) is infinite, and hence \(E(H)\) is infinite.

Enumerate \(I\) in increasing order as
\( (i_j)_j\). For every \(j\in\mathbb{N}\), let \(e_j\)
be the unique edge of \(G\) such that
$
\rho(e_j)=z_{i_j}z_{\,i_j+1}.
$
Here \(i_j+1\) is the index of the vertex immediately after \(z_{i_j}\) on \(r\), whereas \(i_{j+1}\) is the next member of \(I\). We claim that \(e_j\) and \(e_{j+1}\)
have a common endpoint.
Indeed, by the choice of \(i_j\) and \(i_{j+1}\), the path of \(r\)
from \(z_{i_j+1}\) to \(z_{i_{j+1}}\) contains no edge of
\(\rho[E(G)]\). Hence this path is contained either in some
\(r_{v_j}\), where \(v_j\) has infinite degree, or consists of a
single vertex \(v_j\) of finite degree. Consequently, the endpoint
\(z_{i_j+1}\) of \(\rho(e_j)\) and the endpoint
\(z_{i_{j+1}}\) of \(\rho(e_{j+1})\) both correspond to \(v_j\).
Therefore, \(v_j\) is a common endpoint of \(e_j\) and \(e_{j+1}\).
The edges \(e_j\) are pairwise distinct because \(r\) does not repeat
edges and \(\rho\) is injective. Thus,
\((e_j)_j\) is an infinite trail in \(G\). In
particular, the subgraph \(H=\rho^{-1}[r]\) is connected.

We now show that \(H\) is locally finite. If \(v\) has finite
degree in \(G\), then \(v\) clearly has finite degree in \(H\).
Suppose that \(v\) has infinite degree in \(G\). Every edge of
\(H\) incident with \(v\) corresponds, under \(\rho\), to an edge
of \(r\) incident with some vertex \((v,i)\). Moreover, distinct
edges of \(G\) incident with \(v\) correspond to edges incident
with distinct vertices \((v,i)\). Since every vertex \((v,i)\)
belongs to \(r_v\), the assumption that
\(V(r)\cap V(r_v)\) is finite implies that \(v\) has finite
degree in \(H\).

Thus, \(H\) is an infinite, connected and locally finite subgraph
of \(G\). Then \(H\) contains a ray,
contradicting the fact that \(G\) is rayless.

Therefore, there exists a vertex \(u\) of infinite degree such
that \(V(r)\cap V(r_u)\) is infinite. Then \(r\) is
equivalent to \(r_u\). Uniqueness follows from the fact that if $r_u$ and $r_v$ are equivalent rays in $\widetilde{G}$, then $u$ and $v$ are vertices infinitely edge-connected in $G$.
\end{proof}

	\begin{claim} Every end of $\widetilde{G}$ has degree one.
		\label{cray2}
	\end{claim}
\begin{proof}
	Suppose, towards a contradiction, that an end
	\([s]\in\Omega(\widetilde{G})\) has degree at least two.
	By Claim~\ref{cray}, there exists a vertex \(u\in V(G)\) of
	infinite degree such that \([s]=[r_u]\). Therefore, there exist
	two vertex-disjoint rays \(r,r'\in[r_u]\).
	
	Let \(\langle C_n^u\rangle_n\) be the enumeration of the connected
	components of \(G-u\) used in the construction of
	\(\widetilde{G}\). For simplicity, write \(p_n=p_n^u\). By the
	choice of the enumeration of \(E(u)\), we have
	\(E(u,C_n^u)=\{e_i^u:p_n\leq i<p_{n+1}\}\). Consequently, the
	vertices of \(r_u\) through which the expanded
	copy \(\widetilde{C}_n^u\) is attached are 
	\(A_n=N_{\widetilde{G}}(\widetilde{C}_n^u)\cap V(r_u)
	=\{(u,i):p_n\leq i<p_{n+1}\}\). In particular, each \(A_n\) is
	finite, and the sets \(\langle A_n\rangle_n\) occur consecutively
	and without interlacing in \(r_u\).
	
	For each \(n\in\mathbb{N}\), let \(x_n=(u,p_{n+1})\). Let \(L_n\)
	be the component of \(\widetilde{G}-x_n\) containing \((u,0)\),
	and let \(K_n\) be the component containing a tail of \(r_u\).
Notice that \(L_n\neq K_n\).
Indeed, all the edges joining \(u\) to
	\(C_0^u,\ldots,C_n^u\) are represented at vertices \((u,i)\)
	with \(i<p_{n+1}\), whereas all the edges joining \(u\) to
	\(C_m^u\), for \(m>n\), are represented at vertices \((u,i)\)
	with \(i\geq p_{n+1}\). Moreover, distinct components of \(G-u\)
	have no edges between them. Finally, in  \(r_u\),
	every path from a vertex \((u,i)\), with \(i<p_{n+1}\), to a
	vertex \((u,j)\), with \(j>p_{n+1}\), must pass through \(x_n\).
	For each \(i\in\mathbb{N}\), denote by
	\(W_i^u=\{w_j^u:2ki\leq j<2k(i+1)\}\) the vertex set of the copy of
	\(K_{2k}\) placed between \((u,i)\) and \((u,i+1)\). By construction,
	every vertex of \(W_{p_{n+1}-1}^u\) is adjacent, outside this copy of
	\(K_{2k}\), only to \((u,p_{n+1}-1)\) and
	\(x_n=(u,p_{n+1})\). Similarly, every vertex of
	\(W_{p_{n+1}}^u\) is adjacent, outside its copy of \(K_{2k}\), only to
	\(x_n\) and \((u,p_{n+1}+1)\). Moreover, there are no edges between
	\(W_{p_{n+1}-1}^u\) and \(W_{p_{n+1}}^u\). Hence, after removing
	\(x_n\), there is no path in \(r_u\) from a vertex
	\((u,i)\), with \(i<p_{n+1}\), to a vertex \((u,j)\), with
	\(j>p_{n+1}\). Therefore, \(x_n\) separates the part containing the
	blocks corresponding to \(C_0^u,\ldots,C_n^u\) from a tail of \(r_u\).
	
	Let \(v_0\) and \(v_0'\) be the initial vertices of \(r\) and
	\(r'\), respectively. We may choose \(N\in\mathbb{N}\) such that
	\(v_0,v_0'\in L_N\). To see this, consider each of these vertices
	separately. If it belongs to \(\widetilde{C}_j^u\), choose
	\(N>j\). If it is of the form \((u,i)\), choose \(N\) such that
	\(p_{N+1}>i\). Finally, if it is a vertex \(w_\ell^u\) in the copy
	of \(K_{2k}\) between \((u,i)\) and \((u,i+1)\), choose \(N\)
	such that \(p_{N+1}>i+1\). Since only \(v_0\) and \(v_0'\) are
	being considered, the same index \(N\) can be chosen for both.
	
	Since \([r]=[r']=[r_u]\), the definition of equivalent rays
	implies that both \(r\) and \(r'\) have tails contained in
	\(K_N\). However, their initial vertices \(v_0\) and \(v_0'\)
	belong to \(L_N\), and \(L_N\neq K_N\). Therefore, each of the
	rays \(r\) and \(r'\) must pass through \(x_N\). Hence
	\(x_N\in V(r)\cap V(r')\), contradicting the fact that \(r\) and
	\(r'\) are vertex-disjoint. Thus, no end of \(\widetilde{G}\) has degree at least two.
\end{proof}

	By Claim~\ref{cray2}, every topological tree of $\widetilde{G}$ is a tree of $\widetilde{G}$. Consider $k$ pairwise edge-disjoint topological $\widetilde{S}$-Steiner trees of $\widetilde{G}$ given by Theorem \ref{localfin}.  The corresponding subgraphs in $G$ are pairwise edge-disjoint, connected and contain $S$. Taking a spanning tree of each subgraph gives us $k$ pairwise edge-disjoint $S$-Steiner trees. 
\end{proof}

	\begin{theorem}
		\label{ray}
		Let $G$ be a connected, finitely edge-separable, rayless graph and $S\subseteq V(G)$ be a $2k$-edge-connected subset. If Conjecture \ref{kriesell} holds, then $G$ contains $k$ pairwise edge-disjoint $S$-Steiner trees.
	\end{theorem}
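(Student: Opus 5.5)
We reduce Theorem~\ref{ray} to the countable case, Theorem~\ref{quase}, using the fragment-gluing strategy from the proof of Theorem~\ref{final}. The difference is that we now glue genuine trees rather than $F$-limits. If $G$ is countable, Theorem~\ref{quase} applies directly.

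\emph{Preparation.} Assume $G$ is uncountable. As in Theorem~\ref{final}, enlarge $S$ so that it contains every vertex $2k$-edge-connected to it. Then take an $\omega$-bond-faithful decomposition $\mathcal D$ into countable connected fragments (Theorem~\ref{laviolette}). Modify it as in Claims~\ref{claim1} and~\ref{claim4.2} and in the chain construction of the general case, so that three properties hold:
\begin{enumerate}[(i)]
\item every fragment $D$ is countable, connected and rayless;
\item $S_D=S\cap V(D)$ is $2k$-edge-connected in $D$, by Proposition~\ref{decom} together with the merging steps;
\item the auxiliary graph $\Gamma$ on the relevant fragments $\mathcal D^*$, with adjacency given by sharing a vertex of $S$, is connected.
\end{enumerate}
Each fragment is also finitely edge-separable, since it is a subgraph of $G$ and edge-connectivity can only drop. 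If $S$ is countable, we first pass to the countable subgraph $H$ of Theorem~\ref{teoteo}. This $H$ is again rayless and finitely edge-separable, so Theorem~\ref{quase} applies to it and we are done.

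\emph{Gluing.} For each $D\in\mathcal D^*$, Theorem~\ref{quase} gives $k$ pairwise edge-disjoint $S_D$-Steiner trees $T_D^1,\dots,T_D^k$ inside $D$. Fix $j$ and set $U^j=\bigcup_{D\in\mathcal D^*}T_D^j$. Distinct fragments are edge-disjoint, so $U^1,\dots,U^k$ are pairwise edge-disjoint, and each contains $S$ because the fragments in $\mathcal D_S$ cover $S$. Moreover $U^j$ is connected. Indeed, whenever $DD'\in E(\Gamma)$, the trees $T_D^j$ and $T_{D'}^j$ share a vertex $v_{DD'}\in S$. Since $\Gamma$ is connected and every $T_D^j$ is connected, any two vertices of $U^j$ are joined by a finite path: this path follows a finite path of $\Gamma$ and crosses between fragments at the shared vertices. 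Finally, let $T^j$ be a spanning tree of $U^j$, which exists by Zorn's lemma. Then $T^1,\dots,T^k$ are pairwise edge-disjoint $S$-Steiner trees.

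\emph{Main obstacle.} The delicate step is securing (ii) and (iii) at once for uncountable $S$. This means the merging operations must keep fragments countable and bond-faithful while still producing a connected $\Gamma$ whose adjacencies go through vertices of $S$. That is exactly what the chain argument via reducers in the proof of Theorem~\ref{final} supplies, and we would simply reuse it. The new point to check is that nothing in the gluing needs limits. Unlike in Theorem~\ref{final}, there each fragment only provided $F$-limits and one had to take the limit of spanning trees of finite unions. Here each fragment already provides an honest tree containing all of $S_D$, so the plain union is connected. Hence the use of Theorem~\ref{quase}, and thus finite edge-separability, is precisely what upgrades $F$-limits to trees.
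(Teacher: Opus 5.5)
Your proposal is correct and follows essentially the paper's argument. The paper proves Theorem~\ref{ray} in one line, by combining Theorem~\ref{quase} (applied inside the countable, connected, rayless, finitely edge-separable fragments) with the decomposition-and-chain construction from the proof of Theorem~\ref{final}. Your version spells out that combination: genuine $S_D$-Steiner trees can be glued directly at the shared vertices $v_{DD'}\in S$, so no $F$-limit is needed and a spanning tree of the plain union suffices.
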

	\begin{proof}
		This follows from Theorem~\ref{quase} and the proof of Theorem \ref{final}.
\end{proof}

\begin{corollary}
	\label{final1}
	Let $G$ be a $2k$-edge-connected, rayless and finitely edge-separable graph. Then $G$ contains $k$ pairwise edge-disjoint spanning trees.
\end{corollary}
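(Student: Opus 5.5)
The plan is to apply Theorem~\ref{ray} with $S=V(G)$. The corollary carries no hypothesis on Conjecture~\ref{kriesell}, so I cannot invoke the conjecture itself. Instead I will use the one case of it that is a theorem: when $S=V(G)$ is the whole vertex set, Conjecture~\ref{kriesell} is exactly Theorem~\ref{nash}. Accordingly, I will rerun the chain of reductions leading to Theorem~\ref{ray} and check that every finite graph to which the conjecture is applied is itself $2k$-edge-connected with terminal set equal to its full vertex set. This is the same device the paper uses for the countable corollary after Theorem~\ref{teoenum}.

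The steps, in order, are these. First, reduce to the countable case through the proof of Theorem~\ref{final} (the general rayless step). Take the bond-faithful decomposition $\mathcal D$ of Theorem~\ref{laviolette}. By Proposition~\ref{decom}, each fragment $D$ is a countable, connected, rayless, finitely edge-separable graph that is $2k$-edge-connected on its whole vertex set, so $S_D=V(D)$. Here finite edge-separability passes to subgraphs, and $\gamma_D=\min\{\omega,\gamma_G\}\ge 2k$. Second, inside each fragment, run the proof of Theorem~\ref{quase}. The expanded graph $\widetilde D$ is locally finite and connected, and by Claim~\ref{c3} the set $\widetilde S=\sigma^{-1}[V(D)]$ is $2k$-edge-connected. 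The vertices $w^v_i$ of the $K_{2k}$ copies are not in $\widetilde S$, but each of them has $2k+1$ neighbours inside a $2k$-edge-connected block of the $2k$-ray, so $\widetilde D$ is in fact $2k$-edge-connected as a whole graph. Since all of $V(\widetilde D)$ is then $2k$-edge-connected, it is harmless to demand spanning Steiner trees.

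Third, apply the locally finite step of Theorem~\ref{localfin} to $\widetilde D$, taking $S=V(\widetilde D)$. The finite graphs $G_n$ there are unions of edge-disjoint paths between vertices of $S_n$. I will enlarge $S_n$ to $V(G_n)$: pick a finite set $S_n$, form $G_n$ as a union of $2k$ edge-disjoint paths between each pair of vertices of $S_n$, then iterate finitely often, closing under the vertices of the paths added. The resulting finite graph is $2k$-edge-connected on all its vertices, so Theorem~\ref{nash} replaces Conjecture~\ref{kriesell}. With that substitution, the $F$-limits, Claim~\ref{cray2} (every topological tree of $\widetilde D$ is an actual tree), and the gluing along the fragment tree $\mathcal A$ all go through verbatim, and they yield $k$ edge-disjoint connected subgraphs of $G$ containing $V(G)$. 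Taking a spanning tree of each gives the $k$ edge-disjoint spanning trees.

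The main obstacle is the closure step in the third paragraph, which must produce finite $2k$-edge-connected graphs exhausting $\widetilde D$. Adding the internal vertices of the chosen paths to $S_n$ creates new pairs that need their own $2k$ paths, so the process may not terminate finitely. My first attempt is to avoid the issue by working with the finite graphs $G_n$ obtained from finite edge-cuts instead: for each $n$, contract the components of $\widetilde D - B_n$, where $B_n$ is a finite ball, turning each component into a single vertex. Contraction preserves $2k$-edge-connectivity and keeps the graph finite because $\widetilde D$ is locally finite, so Theorem~\ref{nash} gives $k$ edge-disjoint spanning trees of the contracted graph. These trees induce, inside $B_n$, forests whose $F$-limit is handled exactly as in Lemma~\ref{limitrees}.
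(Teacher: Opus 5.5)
Your top-level plan is the paper's, whose proof of Corollary~\ref{final1} is the single line ``Theorems~\ref{nash} and~\ref{ray} for $S=V(G)$''. You are right that this line cannot be taken literally. Even for $S=V(G)$, the argument behind Theorem~\ref{ray} applies Conjecture~\ref{kriesell} with non-spanning terminal sets: in Theorem~\ref{quase} the set $\widetilde S=\sigma^{-1}[S]$ omits every $w_i^v$, and in Theorem~\ref{localfin} the finite graphs $G_n$ carry terminals $S_n\subsetneq V(G_n)$.

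Your first repair step is also right: $\widetilde D$ is $2k$-edge-connected as a whole graph. A vertex $w_i^v\in W_n^v$ is joined to $(v,n)\in\widetilde S$ by the edge $w_i^v(v,n)$ and by the $2k-1$ paths $w_i^vw_j^v(v,n)$, and $\widetilde S$ is $2k$-edge-connected by Claim~\ref{c3}.

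The gap is in the locally finite step that follows. The closure of your third paragraph indeed need not terminate. A locally finite $2k$-edge-connected graph may have no finite $2k$-edge-connected subgraph with two or more vertices at all: in the square grid with $k=2$, every finite subgraph has a vertex of degree at most $3$. More seriously, the limit in your last paragraph is not ``handled exactly as in Lemma~\ref{limitrees}''. That lemma gets path-connectedness from Proposition~\ref{path1}, applied to $u$--$v$ paths inside the finite trees $T_n\subseteq G$. The forest that a spanning tree of your contracted graph induces on $B_n$ need not contain any $u$--$v$ path, since the tree path may pass through a contracted vertex. In addition, an $F$-limit of acyclic graphs can contain a topological circle, as the ladder example of Section~\ref{loc} shows. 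Turning your limit into a topological spanning tree would need a cut criterion for arc-connectedness of standard subspaces plus a separate acyclicity argument, so as written this step is unproved.

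You do not need to supply that argument, because its conclusion is exactly Theorem~\ref{teo}, stated in the introduction; your contraction scheme is essentially the standard proof of that theorem. Since $\widetilde D$ is connected, locally finite and $2k$-edge-connected, Theorem~\ref{teo} gives $k$ edge-disjoint topological spanning trees of $\widetilde D$. With the component-wise enumeration of $E(v)$ from the proof of Theorem~\ref{quase}, Claim~\ref{cray2} makes them genuine spanning trees. Their images under $\rho^{-1}$ are then $k$ edge-disjoint connected spanning subgraphs of $D$.

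For $S=V(G)$ the gluing is also much simpler than in Theorem~\ref{final}. For each $j$, take the union over all fragments $D$ of the $j$-th such subgraph of $D$. It is connected and spanning, because every edge of $G$ lies in a fragment whose entire vertex set that subgraph spans. These unions are pairwise edge-disjoint, because the fragments partition $E(G)$ and the subgraphs inside each fragment are edge-disjoint. So $\mathcal D_S$, the reducer chains and $\mathcal A$ are unnecessary, and the corollary rests on Theorem~\ref{teo} (hence on Theorem~\ref{nash}) rather than on Conjecture~\ref{kriesell}.
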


\begin{proof}
	It follows directly from Theorems~\ref{nash} and~\ref{ray} for $S=V(G)$.
\end{proof}

We conclude by raising the following questions.

\begin{question}
Does the version of Kriesell's conjecture with topological Steiner trees or $F$-limits of trees hold for finitely edge-separable uncountable graphs?
\end{question}

\begin{question}
	
	Does Theorem~\ref{ray} hold for rayless graphs that are not finitely edge-separable?
	\end{question}

\section*{Acknowledgments}
	
The first author acknowledges the support of the Fundação de Amparo à Pesquisa do Estado de São Paulo (FAPESP) through grant number 2025/12199-3. The third author acknowledges the support of the Coordenação de Aperfeiçoamento de Pessoal de Nível Superior - Brasil (CAPES) - Finance Code 001.
	
\bibliographystyle{plain}

\bibliography{refstrees}

\begin{thebibliography}{10}

\bibitem{AharoniThomassen1989}
Ron Aharoni and Carsten Thomassen.
\newblock Infinite, highly connected digraphs with no two arc-disjoint spanning
  trees.
\newblock {\em Journal of Graph Theory}, 13(1):71--74, 1989.

\bibitem{guilherme}
Leandro Aurichi, Paulo~Magalhães Júnior, and Guilherme Pinto.
\newblock On orientations preserving edge-connectivity in infinite graphs.
\newblock {\em Journal of Logic and Computation}, 36(5):exag028, July 2026.

\bibitem{Aurichi2026}
Leandro Aurichi, Paulo~Magalhães Júnior, and Luisa Seixas.
\newblock Limits of cycles and cover conjectures.
\newblock {\em Discrete Mathematics}, 349:114724, 2026.

\bibitem{aurichi2025cyclecoversinfinitebipartite}
Leandro Aurichi, Paulo~Magalhães Júnior, and Lyubomyr Zdomskyy.
\newblock On cycle covers of infinite bipartite graphs.
\newblock \url{https://arxiv.org/abs/2504.02816}, 2025.

\bibitem{boska2025edgedirectioncompactedgeendspaces}
Gustavo Boska, Matheus Duzi, and Paulo~Magalhães Júnior.
\newblock On edge-direction and compact edge-end spaces.
\newblock \url{https://arxiv.org/abs/2503.19088}, 2025.

\bibitem{DeVosMcDonaldPivotto2016}
Matt DeVos, Jessica McDonald, and Irene Pivotto.
\newblock Packing {S}teiner trees.
\newblock {\em Journal of Combinatorial Theory, Series B}, 119:178--213, 2016.

\bibitem{Diestel2025}
Reinhard Diestel.
\newblock {\em Graph Theory}, volume 173 of {\em Graduate Texts in
  Mathematics}.
\newblock Springer-Verlag, Heidelberg, 6th edition, 2025.

\bibitem{frank2003}
András Frank, Tamás Király, and Matthias Kriesell.
\newblock On decomposing a hypergraph into $k$ connected sub-hypergraphs.
\newblock {\em Discrete Applied Mathematics}, 131(2):373--383, 2003.

\bibitem{HahnLavioletteSiran1997}
Geňa Hahn, François Laviolette, and Jozef Širáň.
\newblock Edge-ends in countable graphs.
\newblock {\em Journal of Combinatorial Theory, Series B}, 70(2):225--244,
  1997.

\bibitem{Kriesell2003}
Matthias Kriesell.
\newblock Edge-disjoint trees containing some given vertices in a graph.
\newblock {\em Journal of Combinatorial Theory, Series B}, 88:53--65, 2003.

\bibitem{Laviolette2005}
François Laviolette.
\newblock Decompositions of infinite graphs: I — bond-faithful
  decompositions.
\newblock {\em Journal of Combinatorial Theory, Series B}, 94(2):259--277,
  2005.

\bibitem{Li2018}
Hui Li, Baoyindureng Wu, Jiajia Meng, and Ying Ma.
\newblock Steiner tree packing number and tree connectivity.
\newblock {\em Discrete Mathematics}, 341:1945--1951, 2018.

\bibitem{nash1961edge}
Crispin Nash-Williams.
\newblock Edge-disjoint spanning trees of finite graphs.
\newblock {\em Journal of the London Mathematical Society}, 36(1):445--450,
  1961.

\bibitem{pitz2025metrizationtheoremedgeendspaces}
Max Pitz.
\newblock A metrization theorem for edge-end spaces of infinite graphs.
\newblock {\em Proceedings of the American Mathematical Society},
  154(5):2209--2219, 2026.

\bibitem{real2025subbasepropertydescribingedgeend}
Lucas Real.
\newblock A subbase property for describing edge-end spaces.
\newblock \url{https://arxiv.org/abs/2508.17424}, 2025.

\bibitem{Schmidt1982}
Rudolf Schmidt.
\newblock {\em Ein Reduktionsverfahren für weg-endliche Graphen}.
\newblock PhD thesis, Universität Hamburg, Hamburg, 1982.

\bibitem{tutte1961problem}
William Tutte.
\newblock On the problem of decomposing a graph into $n$ connected factors.
\newblock {\em Journal of the London Mathematical Society}, 36:221--230, 1961.

\end{thebibliography}

\Addresses
	
\end{document}